\setlist[itemize]{noitemsep,nolistsep}
\setlist[enumerate]{noitemsep,nolistsep}
\let\mathcal\mathscr
\def\Z{{\bf Z}}
\def\F{{\bf F}}
\def\C{{\bf C}}
\def\Q{{\bf Q}}
\def\P{{\bf P}}
\def\U{{\bf U}}
\def\av{abelian variety}
\def\avs{abelian varieties}
\def\ppav{principally polarized abelian variety}
\def\ppavs{principally polarized abelian varieties}
\def\phi{\varphi}
\def\cA{\mathcal{A}}
\def\cM{\mathcal{M}}
\def\cO{\mathcal{O}}
\def\cR{\mathcal{R}}
\def\A{{\mathbb A}}
\def\J{{\mathbb{J}}}
\def\G{\mathbb{G}}
\def\QQ{\mathsf{Q}}
\def\mmu{{\boldsymbol \mu}}
\def\lra{\longrightarrow}
\def\llra{\hbox to 10mm{\rightarrowfill}}
\def\lllra{\hbox to 15mm{\rightarrowfill}}
\def\llla{\hbox to 10mm{\leftarrowfill}}
\def\lllla{\hbox to 15mm{\leftarrowfill}}
\def\hra{\hookrightarrow}
\def\lhra{\ensuremath{\lhook\joinrel\relbar\joinrel\rightarrow}}
\def\isom{\simeq}
\def\tY{\widetilde{Y}}
 \def\vide{\varnothing}
\def\go{\mathfrak o}
\def\gS{\mathfrak S}
\def\gA{\mathfrak A}
\DeclareMathOperator{\isomto}{\stackrel{{}_{\scriptstyle\sim}}{\to}}
\DeclareMathOperator{\isomlra}{\stackrel{{}_{\scriptstyle\sim}}{\lra}}
\DeclareMathOperator{\Alb}{Alb}
\DeclareMathOperator{\AJ}{\mathsf{A\!J}}
\DeclareMathOperator{\Aut}{Aut}
\DeclareMathOperator{\Card}{Card}
\DeclareMathOperator{\CH}{CH}
\DeclareMathOperator{\coker}{Coker}
\DeclareMathOperator{\disc}{disc}
\DeclareMathOperator{\Disc}{Disc}
\def\div{\mathop{\rm div}\nolimits}
\DeclareMathOperator{\End}{End}
\DeclareMathOperator{\GL}{GL}
\DeclareMathOperator{\Fix}{Fix}
\DeclareMathOperator{\Hdg}{Hdg}
\DeclareMathOperator{\Gr}{\mathsf{Gr}}
\DeclareMathOperator{\Id}{Id}
\DeclareMathOperator{\Jac}{Jac}
\DeclareMathOperator{\NS}{NS}
 \DeclareMathOperator{\PSL}{PSL}
\DeclareMathOperator{\PGL}{PGL}
\DeclareMathOperator{\Pic}{Pic}
\DeclareMathOperator{\Sp}{Sp}
\DeclareMathOperator{\Spec}{Spec}
\DeclareMathOperator{\Sing}{Sing}
\DeclareMathOperator{\Sym}{Sym}
\DeclareMathOperator{\SL}{SL}
\DeclareMathOperator{\Tr}{Tr}
\def\llra{\hbox to 10mm{\rightarrowfill}}
\def\lllra{\hbox to 15mm{\rightarrowfill}}
\def\bw#1#2{\textstyle{\bigwedge\hskip-0.9mm^{#1}}\hskip0.2mm{#2}}
\def\sbw#1#2{\small{\bigwedge\hskip-0.9mm^{#1}}\hskip0.2mm{#2}}
\newtheorem{lemm}{Lemma}[section]
\newtheorem{theo}[lemm]{Theorem}
\newtheorem{coro}[lemm]{Corollary}
\newtheorem{prop}[lemm]{Proposition}
\theoremstyle{definition}
\newtheorem{rema}[lemm]{Remark}
\newtheorem{exam}[lemm]{Example}
\theoremstyle{remark}
\newtheorem*{remark*}{Remark}
\newtheorem*{note*}{Note}
\def\moins{\smallsetminus}
\def\hk{{hyperk\"ahler}}
\def\SN{{\mathsf S}}
\def\mon{{Klein}}
\def\setminus{\smallsetminus}
\begin{document}
\title[GM varieties with many symmetries]{
Gushel--Mukai varieties with many symmetries and an explicit irrational Gushel--Mukai threefold}

\author[O.\ Debarre]{Olivier Debarre}
\thanks{This project has received funding from the European
Research Council (ERC) under the European
Union's Horizon 2020 research and innovation
programme (Project HyperK --- grant agreement 854361).}
\address{Universit\'e de Paris, CNRS, 
IMJ-PRG, F-75013 Paris, France}
 \email{{\tt olivier.debarre@imj-prg.fr}}
 
 \author[G.\ Mongardi]{Giovanni Mongardi}
\address{Dipartimento di Matematica, Universit\`a degli studi di Bologna, Piazza Di Porta San Donato 5, Bologna, Italia 40126}
 \email{{\tt giovanni.mongardi2@unibo.it}}

 \date{\today}

  \subjclass[2020]{14E08,
  14J45, 14J42, 
14J30, 14J35, 14J40, 14J45, 14J50,  14K22, 14K30,
14C25,
14H52, 14J70
}
 \keywords{Fano varieties, Gushel--Mukai varieties, hyperk\"ahler varieties, 
 EPW sextics,  automorphisms, rationality,  intermediate Jacobians, abelian varieties with complex multiplication.
}

\begin{abstract}
We construct an explicit   complex  smooth Fano threefold with Picard number 1,  index 1, and degree 10 (also known as a Gushel--Mukai threefold) and prove that it is not rational by showing that its intermediate Jacobian has a faithful $\PSL(2,\F_{11}) $-action.\ Along the way, we construct Gushel--Mukai varieties of various dimensions with  rather large (finite) automorphism groups.\ 
The starting point  of all these constructions is an Eisenbud--Popescu--Walter sextic with a faithful $\PSL(2,\F_{11}) $-action discovered  by the second author in 2013.
\end{abstract}

\maketitle

\hfill{\it To Fabrizio Catanese, on the occasion of his 70+1st birthday}

\section{Introduction}

The problem of the rationality of complex unirational smooth Fano threefolds has now been solved in most cases but there are still some unanswered questions.\ For example, Beauville   established in \cite[Theorem.~5.6(ii)]{bea1}, by a degeneration argument using the Clemens--Griffiths criterion,  that a {\em general} Fano threefold with Picard number 1,  index 1, and degree 10 (also known as a Gushel--Mukai, or GM, threefold) is irrational, but not a single smooth example was known, although it is  expected that all of these Fano threefolds are irrational.\ One of the main results of this article is the construction of a complete 2-dimensional family of such examples (Corollary~\ref{coro52}), including one such threefold defined (over~$\Q$) by explicit equations  (Section~\ref{se23}, Corollary~\ref{coro53}).

Our starting point  was a remarkable EPW (for Eisenbud--Popescu--Walter) sextic hypersurface $Y_\A\subset \P^5$, constructed in \cite{monphd}, with a faithful action   by    the 
 simple group $\G:=\PSL(2,\F_{11}) $ of order $660$ (Section~\ref{sect32}).\ We prove that the  automorphism group of~$Y_\A$ is exactly~$\G$ (Proposition~\ref{prop:all_autom_A}) and that it  is the only quasi-smooth EPW sextic with an automorphism of order $11$ (Theorem~\ref{th47}).\

From this sextic, one can construct   GM varieties of various dimensions  with exotic properties.\ Using \cite{dkeven},  we obtain for example families of GM varieties of dimensions $4$ or~$6$ with middle-degree Hodge groups  of maximal rank~22 (Section~\ref{sect46}).\ 

Another  application is the construction of GM varieties with large (finite) automorphism groups.\ The foremost example is 
 a GM fivefold $X^5_\A$ with automorphism group $\G$ (Corollary~\ref{cor48}(2)) but we also construct GM varieties of various dimensions with automorphism groups  $\Z/11\Z$, $D_{12}$, $\Z/6\Z$, $\Z/3\Z$, $D_{10}$, $\Z/5\Z$, $\gA_4$, $(\Z/2\Z)^2$, or $\Z/2\Z$   (Table~\ref{tabaut}).\ 
 
 By \cite{dkij}, the intermediate Jacobians of  the GM varieties of dimension $3$ or $5$ obtained from the sextic $Y_\A$ are all isomorphic to a fixed \ppav\ $(\J,\theta)$ of dimension~$10$.\ This applies in particular to $X^5_\A$, and  the $\G$-action on 
$X^5_\A$ induces a faithful $\G$-action on $(\J,\theta)$.\ We use this fact 
 to prove that  the GM threefolds  that we construct from $Y_\A$ are not rational: by   the  Clemens--Griffiths criterion (\cite[Corollary~3.26]{cg}),   it suffices  to prove that their  (common)  intermediate Jacobian $(\J,\theta)$ is not a product of Jacobians of curves.\ For this, we
  follow  \cite{bea2,bea3} and use the fact that $(\J,\theta)$ has ``too many automorphisms'' (because of the $\G$-action).\ Note that the GM threefolds themselves may have no nontrivial automorphisms.\ This is how we produce a  complete 2-dimensional family of irrational GM threefolds, all mutually birationally isomorphic.

The 10-dimensional   \ppav\ $(\J,\theta)$ seems an interesting object of study.\ The   10-dimensional complex representation attached to the $\G$-action is  irreducible and defined over $\Q$.\ This implies that $(\J,\theta)$ is indecomposable and isogeneous to the  product of $10$ copies of an elliptic curve  (Propositions~\ref{prop61} and~\ref{prop62}).\ We conjecture, but were unable to prove, that~$(\J,\theta)$ is isomorphic to an explicit 10-dimensional \ppav\ that we construct in Proposition~\ref{prop63}.\ 

The situation is reminiscent of that of the Klein cubic threefold $W\subset\P^4$: Klein proved in~\cite{kle} that $W$ has a faithful linear $\G$-action; one hundred years later, Adler proved in \cite{adl} that the automorphism group of $W$ is exactly~$\G$ and Roulleau showed in \cite{rou} that $W$ is the only smooth cubic
threefold with an automorphism of order 11.\ The intermediate Jacobian of~$W$ is a \ppav\ of dimension $5$ isomorphic   to the  product of $5$ copies of an elliptic curve with complex multiplication and Adler proved in \cite{adls} that it is the only \av\ of dimension 5 with a faithful action of $\G$.\ This is the   reason why we call our   sextic $Y_\A$ the Klein EPW sextic.\ We also refer to \cite{cks} for the construction of a one-dimensional family of threefolds with $\gS_6$-actions whose intermediate Jacobians are isogeneous to the  product of $5$ copies of  varying elliptic curves (\cite[Remark~4.5]{cks}).

Our proofs  heavily use the construction by O'Grady in~\cite{og7} of canonical double covers of quasi-smooth EPW sextics called double EPW sextics (see also \cite{dkcovers}).\ They are smooth \hk\ fourfolds whose automorphisms may, thanks to Verbitsky's Torelli Theorem, be determined using  lattice theory.\ We also use the  close relationship between EPW sextics and GM varieties developed in \cite{im,dkclass,dkeven,dkmoduli,dkij} and surveyed in \cite{debsur}.

The article is organized as follows.\ In Section~\ref{sect2}, we recall basic facts about EPW sextics and GM varieties.\ In Section~\ref{se3}, we describe explicitly the  \mon\  Lagrangian $\A$ and the Klein EPW sextic~$Y_\A$, and we prove that the EPW sextic $Y_\A$ is quasi-smooth.\  In Section~\ref{sect4}, we   prove that the automorphism group of $Y_\A$ is $\G$; we also prove that $Y_\A$ is the only quasi-smooth EPW sextic with an automorphism of order~$11$.\  We also discuss the possible automorphism groups and some Hodge groups of the various GM varieties that can be constructed from the Lagrangian~$\A$.\ 
In Section~\ref{sect5}, we introduce the important surface~$\tY_A^{\ge 2}$ (a double \'etale cover of the singular locus of $Y_\A$) and its Albanese variety $(\J,\theta)$.\ We prove our irrationality results for GM threefolds and discuss the structure of the   10-dimensional \ppav\ 
 $(\J,\theta)$.\
 
 The rest of the article consists of appendices.\ In the long
Appendix~\ref{appC}, we gather old and new general results on automorphisms of double EPW sextics and of  double EPW surfaces.\ 
   Appendix~\ref{sea2} recalls a few classical facts about representations of the group $\G$.\ Appendix~\ref{b2} discusses   decomposition results for   abelian varieties with automorphisms.

\noindent{\bf Notation.}  Let $m$ be a positive integer; throughout this article,   $V_m$  denotes a complex vector space of dimension~$m$ and we   set
  $\zeta_m:=e^{\frac{2\pi i}{m}}$.\ As we did above, we  denote by
 $\G$ the simple group $\PSL(2,\F_{11}) $
   of order $660$.\

  \noindent{\bf Acknowledgements.}  We would like to thank   B.~Gross,   G.~Nebe, D.~Prasad, Yu.~Prokhorov, and O.~Wittenberg for fruitful exchanges.\ Special thanks go to A.~Kuznetsov, whose numerous comments and suggestions helped improve the exposition and the results of this article; in particular, Propositions~\ref{split1} and~\ref{split2} are his.

\section{Eisenbud--Popescu--Walter sextics and Gushel--Mukai varieties}\label{sect2}

We recall in this section a few basic facts about Eisenbud--Popescu--Walter (or EPW for short) sextics and Gushel--Mukai (or GM for short) varieties.

\subsection{EPW sextics and their automorphisms}\label{se1}

Let $V_6$ be a $6$-dimensional complex vector space.\ We endow $\bw3V_6$ with the $\bw6V_6$-valued symplectic form defined by wedge product.\
Given a  Lagrangian subspace $A\subset \bw3V_6$ and a nonnegative integer $\ell$, one defines (see \cite[Section 2]{og1} or \cite[Appendix~B]{dkclass}) in   $\P(V_6)$  the closed subschemes
\begin{equation*}\label{yabot}
Y_A^{\ge \ell}:=\bigl\{[x]\in\P(V_6) \mid \dim\bigl(A\cap (x \wedge\bw{2}{V_6} )\bigr)\ge \ell\bigr\}
\end{equation*}
 and the locally closed subschemes
 \begin{equation*}\label{yaell}
 Y_A^\ell :=\bigl\{[x]\in\P(V_6) \mid \dim\bigl(A\cap (x \wedge\bw{2}{V_6} )\bigr)= \ell\bigr\} = Y_A^{\ge \ell} \setminus Y_A^{\ge \ell + 1}.
 \end{equation*}
We  henceforth assume that 
$A$ contains no decomposable vectors (that is, no nonzero products $x\wedge y\wedge z$).\ The scheme  $Y_A:=Y_A^{\ge 1}$ is then an integral sextic hypersurface (called an {\em EPW sextic}) whose singular locus is the integral surface $Y_A^{\ge 2}$; the singular locus of that surface is the finite set $Y_A^{\ge 3}$ (see \cite[Theorem~B.2]{dkclass}) which is empty for $A$ general.\

One
  has moreover (\cite[Proposition~B.9]{dkclass})
\begin{equation}\label{autya}
\Aut(Y_A)=\{ g\in  \PGL(V_6)\mid (\bw3g)(A)=A\}
\end{equation}
and this group is finite.

\subsection{GM varieties and their automorphisms}\label{se22n}

A (smooth ordinary) GM variety of dimension $n\in\{3,4,5\}$ is the smooth complete intersection, in $\P(\bw2V_5)$, of the Grassmannian~$\Gr(2,V_5)$ in its Pl\"ucker embedding, a linear space $\P^{n+4}$, and a quadric.\ It is a Fano variety with Picard number~$1$,  index~$n-2$, and degree~$10$.

There is a bijection between the set of isomorphism classes of (smooth  ordinary) GM varieties~$X$ of dimension $n$ and the set of isomorphism classes  of triples  $(V_6,V_5,A)$, where  $A\subset\bw3 V_6$ is a Lagrangian subspace with no decomposable vectors  and $V_5\subset V_6$ is a hyperplane  such that
    \begin{equation}\label{yperp}
\dim (A\cap \bw3V_5)=5-n
  \end{equation}
(this bijection was first described in the proof of~\cite[Proposition~2.1]{im} when $n=5$;  for the general case, see \cite[Theorem~3.10 and Proposition~3.13(c)]{dkclass} or~\cite[(2)]{debsur}).

By \cite[Lemma~2.29 and Corollary~3.11]{dkclass}, we have
   \begin{equation}\label{autxa}
   \Aut(X)\isom \{ g\in  \Aut(Y_A)\mid g(V_5)=V_5\}.
    \end{equation}

\section{The \mon\  Lagrangian}\label{se3}

The following construction  of an EPW sextic  
with a faithful  $\G$-action first appeared in \cite[Example~4.5.2]{monphd}.\ 

\subsection{The \mon\  Lagrangian $\A$ and the    GM fivefold $X_\A^5$}\label{se31}  

Let $\xi\colon\G\to\GL(V_\xi)$ be the irreducible
 representation of~$ \G$ of dimension 5 described in Appendix~\ref{sea2}.\ From the existence of a unique (up to multiplication by a nonzero scalar) $\G$-equivariant
  symmetric  isomorphism
  \begin{equation}\label{defw}
w\colon \bw2V_\xi\isomlra \bw2V_\xi^\vee 
\end{equation}
  as in~\eqref{defu}, we infer that   there is a unique $\G$-invariant quadric 
  \begin{equation}\label{defq}
\QQ \subset \P(\bw2V_\xi)
\end{equation}
and that it is smooth.\ Since its equation does not lie  in the image of the $\G$-equivariant morphism
$$V_\xi\isom \bw4V_\xi^\vee \lhra \Sym^2(\bw2V_\xi^\vee),
$$
which is the  space of Pl\"ucker quadrics, the quadric $\QQ$ does not contain 
  the Grassmannian~$\Gr(2,V_\xi)$.\ Therefore, it defines a GM fivefold
\begin{equation}\label{defx}
X_\A^5:=\QQ\cap \Gr(2,V_\xi) 
\end{equation}
with a faithful $\G$-action (we will show below that $X_\A^5$ is smooth).\ 

The group $\G$ being simple nonabelian, the representation $\bw5\xi$ is trivial.\ The isomorphism~$w$ from~\eqref{defw} therefore induces  an isomorphism of representations
\begin{equation}\label{defv}
v\colon \bw2V_\xi\isomlra  \bw2V_\xi^\vee \otimes \bw5V_\xi \isomlra \bw3  V_\xi.
\end{equation}
Since $w$ is symmetric, $v$ satisfies
 $v(x)\wedge y=x\wedge v(y)$ for all $x,y\in \bw2V_\xi$.\  

Let $\chi_0\colon \G\to V_{\chi_0}$ be the trivial representation and consider   the $\G$-representation 
$$V_6:=V_{\chi_0}\oplus V_\xi.$$
The    decomposition of~$\bw3V_6$ into  irreducible $\G$-representations is  
\begin{equation}\label{deco}
\bw3V_6=( V_{\chi_0}\wedge  \bw2V_\xi)\oplus \bw3V_\xi 
\end{equation}
and, if $e_0$ is a generator of $V_{\chi_0}$, the Lagrangian subspace $\A\subset \bw3V_6$ associated with the GM fivefold $X^5_\A$ according to the general procedure outlined  in Section~\ref{se22n}
is  the graph
$$\A:=\{ e_0\wedge x+  v(x)\mid x\in  \bw2V_\xi\}$$
of $v$.\
Conversely, $X^5_\A$ is the GM fivefold associated with the Lagrangian $\A$ and the hyperplane \mbox{$V_\xi\subset V_6$} (referring to~\eqref{yperp}, note that $\A\cap \bw3V_\xi=0$).

We will use the following notation.\ Let $c$ and $a$ be the elements of $\G$ defined in Appendix~\ref{sea2}
 and let $(e_1,\dots,e_5)$ be a basis of $V_\xi$ in which $\xi(c)$ and $\xi(a)$   have matrices as in~\eqref{real}.\ Let  $(e^\vee_1,\dots,e^\vee_5)$ be the  dual basis of~$V_\xi^\vee$.\ We also set
 $e_{i_1\cdots i_r}=e_{i_1}\wedge \dots \wedge e_{i_r}\in \bw{r}V_6$.

\begin{prop}\label{prop:GM5_smooth}
The GM fivefold $X^5_\A$ is smooth and the Lagrangian subspace $\A$ contains no decomposable vectors.
\end{prop}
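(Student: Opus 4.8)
The plan is to prove both assertions at once, using the fact that for an ordinary GM variety the condition that $\A$ contains no decomposable vectors is equivalent to the smoothness of the associated variety together with an auxiliary nondegeneracy, and that here we have strong $\G$-equivariance to exploit. First I would recall that by the construction in Section~\ref{se31}, $X^5_\A=\QQ\cap\Gr(2,V_\xi)$ where $\QQ$ is the unique $\G$-invariant quadric, which we already know is smooth and does not contain $\Gr(2,V_\xi)$. The smoothness of $X^5_\A$ would follow if I can show that $\QQ$ is transverse to $\Gr(2,V_\xi)$ along their intersection. I would argue this using the $\G$-action: the singular locus $\Sing(X^5_\A)$ is a $\G$-invariant closed subscheme of $X^5_\A$, and since $\xi$ is an irreducible representation with no $\G$-fixed lines in $\bw2V_\xi$ (because $\bw2\xi$ is itself irreducible, as recorded in Appendix~\ref{sea2}), there are no $\G$-fixed points in $\P(\bw2V_\xi)$. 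Hence $\Sing(X^5_\A)$, being a proper $\G$-invariant subscheme, must be empty or of positive dimension with no fixed points; a dimension count combined with the irreducibility of the relevant representations should force it to be empty.

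Next I would turn to the claim that $\A$ contains no decomposable vectors. The cleanest route is via the characterization of GM varieties: by the dictionary in Section~\ref{se22n}, $\A$ has no decomposable vectors if and only if $Y_\A$ is an honest EPW sextic and $X^5_\A$ is a genuine (smooth ordinary) GM fivefold rather than a degenerate object. More directly, I would use the graph description $\A=\{e_0\wedge x+v(x)\mid x\in\bw2V_\xi\}$. A nonzero decomposable vector $e_0\wedge x+v(x)=\alpha\wedge\beta\wedge\gamma$ with $\alpha,\beta,\gamma\in V_6$ must be analyzed by splitting according to the decomposition $V_6=V_{\chi_0}\oplus V_\xi$. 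Writing each of $\alpha,\beta,\gamma$ in terms of its $V_{\chi_0}$- and $V_\xi$-components, the $V_{\chi_0}$-part of the product lives in $V_{\chi_0}\wedge\bw2V_\xi$ and the remaining part in $\bw3V_\xi$; matching these against $e_0\wedge x$ and $v(x)$ respectively turns the problem into: for which $x\in\bw2V_\xi$ is both $x$ (as an element of $V_{\chi_0}\wedge\bw2V_\xi$, up to the $e_0$ factor) and $v(x)\in\bw3V_\xi$ simultaneously compatible with a single rank-one decomposition in $V_6$. The symmetry relation $v(x)\wedge y=x\wedge v(y)$ should feed into this.

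The main obstacle, I expect, will be the decomposability analysis rather than the smoothness. Concretely, I anticipate reducing to showing that no nonzero $x\in\bw2V_\xi$ satisfies $x\wedge v(x)=0$ in $\bw5V_\xi\cong V_{\chi_0}$ while simultaneously $x$ being decomposable and $v(x)$ decomposable and appropriately aligned; the quadratic form $x\mapsto x\wedge v(x)$ is, up to scalar, the $\G$-invariant quadric $\QQ$ defining $X^5_\A$, so vanishing of this form picks out exactly the Grassmannian cone intersected with $\QQ$. Since $\QQ$ does not contain $\Gr(2,V_\xi)$ and $X^5_\A$ is smooth (proved in the first step), the locus where things could go wrong is controlled. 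I would then verify, using the explicit matrices of $\xi(c)$ and $\xi(a)$ from~\eqref{real} if a concrete check is needed, that the only way to have a decomposable vector forces $x$ to lie in $\Gr(2,V_\xi)\cap\QQ$ with an additional incidence that is incompatible with smoothness, giving a contradiction. Thus both statements follow, with the equivariant structure doing most of the conceptual work and an explicit linear-algebra computation available as a fallback for the decomposability step.
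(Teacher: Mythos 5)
The smoothness step contains a genuine gap. Your argument runs: $\Sing(X^5_\A)$ is a $\G$-invariant closed subscheme, $\P(\bw2V_\xi)$ has no $\G$-fixed points (this part is correct, since $\bw2\xi$ is irreducible), hence the singular locus ``must be empty or of positive dimension,'' to be eliminated by an unspecified dimension count. The first inference is false: $\G$ is a \emph{finite} group, so a $\G$-invariant finite set need not contain a fixed point --- any single $\G$-orbit (of size at most $660$) is a finite invariant set. For instance, the points of $\P(\bw2V_\xi)$ given by eigenvectors of $\bw2\xi(c)$ have finite, nontrivial, fixed-point-free orbits; and in the paper's own Section~\ref{sec422} the five $c$-fixed points lying on $Y^2_\A$ belong to a single $\G$-orbit of size $60$ containing no $\G$-fixed point. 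Likewise, irreducibility of the representation does not forbid positive-dimensional invariant subvarieties: $\Gr(2,V_\xi)$, $\QQ$, $Y_\A$, $\Sing(Y_\A)$, and $X^5_\A$ itself are all $\G$-invariant. So equivariance alone cannot rule out either a finite or a positive-dimensional singular locus, and the phrases ``should force it to be empty'' and ``a dimension count combined with irreducibility'' stand in for an argument that is never given. The paper does not attempt a conceptual proof here: it normalizes $w$ on $\bw2\xi(c)$-eigenvectors (getting $w(e_{12})=-e^\vee_{13}$, then the full table via $\bw2\xi(a)$-equivariance and symmetry), derives the explicit equation $x_{12}x_{13}+x_{23}x_{24}+x_{34}x_{35}-x_{45}x_{14}+x_{15}x_{25}=0$ of $\QQ$, and verifies smoothness of $X^5_\A$ by a Macaulay2 computation. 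You reserve the explicit matrices~\eqref{real} only ``as a fallback'' for the decomposability step, but it is the smoothness step where explicit computation is (at least in the paper's treatment) unavoidable.

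Your second half is sounder and architecturally matches the paper, which disposes of decomposable vectors in one line by citing \cite[Theorem~3.16]{dkclass}: a smooth GM variety of dimension at least $3$ has a Lagrangian with no decomposable vectors. Your graph analysis is on the right track: a nonzero decomposable element of $\A$ must have nonzero $e_0$-component, hence can be normalized to $(e_0+\alpha')\wedge\beta'\wedge\gamma'$ with $\alpha',\beta',\gamma'\in V_\xi$, forcing $x=\beta'\wedge\gamma'$ to be decomposable with $v(x)=\alpha'\wedge\beta'\wedge\gamma'\in x\wedge V_\xi$, whence $x\wedge v(x)=0$ and $[x]\in\Gr(2,V_\xi)\cap\QQ=X^5_\A$; your identification of $x\mapsto x\wedge v(x)$ with the invariant quadric is also correct, and the extra incidence $v(x)\in x\wedge V_\xi$ is exactly what contradicts smoothness of $X^5_\A$ at $[x]$. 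Carried out carefully (or replaced by the citation), this step would be fine --- but since it is conditional on smoothness, and your smoothness argument fails, the proposal as a whole does not go through.
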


\begin{proof}
  The basis $(e_{ij})_{1\le i<j\le 5}$ of $\bw{2}V_\xi$   consists  of eigenvectors  of $\bw{2}\xi(c)$, with eigenvalues all the primitive $11^{\textnormal{th}}$ roots of $1$, and similarly for the dual basis $(e_{ij}^\vee)_{1\le i<j\le 5}$ of $\bw{2}V_\xi^\vee$.\  
Looking at the corresponding eigenvalues, we see that  we may normalize  the isomorphism $w$ in~\eqref{defw} so that it satisfies $w(e_{12})=-e_{13}^\vee$ (both are eigenvectors of $\bw{2}\xi(c)$ with eigenvalue~$\zeta_{11}^{5}$).\ Applying $\bw{2}\xi(a)$, we find 
$$
w(e_{12})=-e_{13}^\vee,\ w(e_{23})=-e_{24}^\vee,\ w(e_{34})=-e_{35}^\vee,\ w(e_{45})=e_{14}^\vee,\ w(e_{15})=-e_{25}^\vee.
$$
Since $w$ is symmetric, we also have
$$
w(e_{13})= -e_{12}^\vee,\ w(e_{24})=- e_{23}^\vee,\ w(e_{35})=- e_{34}^\vee,\ w(e_{14})= e_{45}^\vee,\ w(e_{25})= -e_{15}^\vee.$$
The quadric $\QQ$ from~\eqref{defq} is therefore  defined by  
\begin{equation}\label{eqQ}
x_{12}x_{13}+x_{23}x_{24}+x_{34}x_{35}-x_{45}x_{14}+x_{15}x_{25}=0.
\end{equation}
A  computer check with \cite{m2}
 now ensures that the GM fivefold $X^5_\A$ defined by~\eqref{defx}  is smooth.\ It   follows from \cite[Theorem~3.16]{dkclass} that $\A$ contains no decomposable vectors.
\end{proof}

The group $\G$  acts faithfully on the  GM fivefold $X^5_\A$.\ Using
  the isomorphism~\eqref{autxa}, we see that it also acts faithfully on the EPW sextic~$Y_\A$ by linear automorphisms that fix the hyperplane $V_\xi$.\ More precisely, the representation $\chi_0\oplus \xi\colon \G\hra \GL(V_6)$ induces an  embedding
$ \G \hra  \Aut(Y_\A)\subset \PGL(V_6)$.\
We will prove in Proposition \ref{prop:all_autom_A} that the embedding
$ \G \hra  \Aut(Y_\A)$ is in fact an isomorphism.

 \subsection{Explicit equations}\label{sect32}
As we saw in the proof of Proposition~\ref{prop:GM5_smooth}, and with the notation of that proof,  the isomorphism $v\colon \bw2V_\xi\isomto \bw3  V_\xi$ from~\eqref{defv} may be defined by
\begin{equation}\label{v2}
\begin{aligned}
v(e_{12})=e_{245},\ v(e_{23})=e_{135},\ v(e_{34})&=e_{124},\ v(e_{45})=e_{235},\ v(e_{15})=-e_{134},\\
v(e_{13})= -e_{345},\ 
v(e_{24})= -e_{145},\ v(e_{35})&=- e_{125},\ v(e_{14})= e_{123},\ v(e_{25})=e_{234}.
\end{aligned}
\end{equation}
This gives  
  \begin{equation}\label{defA}
  \begin{aligned}
\A=  \langle &
e_{012}+  e_{245},
 e_{013} -  e_{345},
 e_{014} +  e_{123}, 
  e_{015} -    e_{134},
  e_{023} +  e_{135},\\
   &\qquad e_{024}- e_{145} , 
  e_{025} + e_{234} , 
 e_{034}+ e_{124} ,
 e_{035}- e_{125} , 
 e_{045}+ e_{235} \rangle.
\end{aligned}
\end{equation}
One can readily see from this that the isomorphism $V_6\isomto V_6^\vee$ that sends $e_0$ to $-e_0^\vee$ and $e_j$ to~$e_j^\vee$ for   $j\in\{1,\dots,5\}$ maps $\A$ onto its orthogonal $\A^\bot$, a Lagrangian subspace of $\bw3V_6^\vee$; we say that $\A$ is {\em self-dual.}\ 
Also, if one starts from the dual representation~$\xi^\vee$, one obtains the same Lagrangian~$\A$.

\begin{prop}\label{yaqs}
The EPW sextic~$Y_\A$ is defined by the equation
 \begin{equation}\label{sextic_equation}
\begin{aligned}
&x_0^6+ 2x_0^3(x_1x_3^2+x_2x_4^2+x_3x_5^2+x_4x_1^2+x_5x_2^2)-4x_0(x_1^3x_2^2+ x_2^3x_3^2+ x_3^3x_4^2+x_4^3x_5^2+x_5^3x_1^2)\\
&{}+4x_0(x_1x_3x_4^3+x_2x_4x_5^3+x_3x_5x_1^3+ x_4x_1x_2^3 + x_5x_2x_3^3)-12x_0x_1x_2x_3x_4x_5\\
&{}+x_1^2x_3^4+x_2^2x_4^4 +x_3^2x_5^4+x_4^2x_1^4+x_5^2x_2^4 -4(x_1x_4x_5^4+x_2x_5x_1^4+x_3x_1x_2^4+x_4x_2x_3^4+x_5x_3x_4^4) \\
&{}-2(x_1x_3^3x_5^2+x_2x_4^3x_1^2+x_3x_5^3x_2^2+x_4x_1^3x_3^2+x_5x_2^3x_4^2)\\
&{}+6(x_1x_2x_3^2x_4^2+x_2x_3x_4^2x_5^2+x_3x_4x_5^2x_1^2+x_4x_5 x_1^2x_2^2 + x_5x_1x_2^2x_3^2)=0
\end{aligned}
\end{equation}
  in $\P(V_6)$.\
The   scheme $ Y^{\ge 2}_\A$ is a smooth irreducible surface, so that the scheme $Y^{\ge 3}_\A$ is empty.\ 
\end{prop}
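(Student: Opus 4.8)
The plan is to verify the two assertions in Proposition~\ref{yaqs} separately: first the explicit sextic equation~\eqref{sextic_equation}, and then the smoothness of $Y^{\ge 2}_\A$.

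For the equation, I would compute the EPW sextic directly from its defining determinantal description rather than attempt to recognize the polynomial by hand.\ Recall that $Y_\A=Y_\A^{\ge 1}$ consists of those $[x]\in\P(V_6)$ for which $\A\cap(x\wedge\bw2V_6)\neq 0$, equivalently those $[x]$ for which the linear map $\bw2V_6\to\bw3V_6/\A$ sending $\omega\mapsto x\wedge\omega \bmod \A$ fails to be injective.\ Since $\dim\bw2V_6=15$ and $\dim(\bw3V_6/\A)=20-10=10$, one instead uses the standard presentation: choosing the self-dual structure exhibited above, $[x]\in Y_\A$ iff a certain $10\times 10$ matrix $M_A(x)$ (with entries linear in the coordinates $x_0,\dots,x_5$), built from the wedge pairing against the explicit generators of~$\A$ in~\eqref{defA}, is singular.\ Concretely, using the self-duality isomorphism $V_6\isomto V_6^\vee$ noted after~\eqref{defA}, the EPW sextic is cut out by $\det M_\A(x)=0$.\ I would set up $M_\A(x)$ from the ten generators listed in~\eqref{defA} and compute its determinant in \cite{m2}; the output, up to a nonzero scalar, should be the right-hand side of~\eqref{sextic_equation}.\ The $\G$-invariance of the displayed polynomial (it is manifestly cyclically symmetric under the index shift $i\mapsto i+1$ induced by $\xi(a)$, reflecting the action of $a$) serves as a strong consistency check, as does the degree-$6$ homogeneity.

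For the second assertion, the strategy is to show that the surface $Y^{\ge 2}_\A$ is smooth, from which $Y^{\ge 3}_\A=\Sing(Y^{\ge 2}_\A)=\varnothing$ follows immediately by the general fact quoted from \cite[Theorem~B.2]{dkclass}.\ The scheme $Y^{\ge 2}_\A$ is the locus where the same map $\omega\mapsto x\wedge\omega\bmod\A$ has a kernel of dimension $\ge 2$, i.e.\ where $M_\A(x)$ has corank $\ge 2$, which is the second determinantal (degeneracy) locus defined by the vanishing of all $9\times 9$ minors of $M_\A(x)$.\ I would again delegate this to \cite{m2}: form the ideal of $9\times 9$ minors, check that the resulting scheme is reduced, irreducible of dimension $2$, and smooth (for instance by verifying that the Jacobian criterion holds, or that the singular subscheme of $Y^{\ge 2}_\A$ is empty).\ Smoothness here is equivalent to $Y^{\ge 3}_\A=\varnothing$, since $Y^{\ge 3}_\A$ is exactly the singular locus of $Y^{\ge 2}_\A$.

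The main obstacle is not conceptual but computational and verificational.\ The determinant of a $10\times 10$ matrix of linear forms is a degree-$10$ polynomial a priori, so I would need to understand why it factors (or is presented) as a degree-$6$ sextic times a degree-$4$ factor, or equivalently use the correct reduced determinantal presentation so that $\det M_\A(x)$ is genuinely of degree~$6$; getting the bookkeeping of signs and the precise form of $M_\A$ right from~\eqref{defA} is the delicate part.\ Likewise, the ideal-of-minors computation for $Y^{\ge 2}_\A$ can be heavy, and one must make sure the Macaulay2 verification of reducedness, irreducibility, dimension, and smoothness is complete and not merely a generic-point check.\ Once the matrix $M_\A$ is correctly assembled, however, both the determinant computation and the degeneracy-locus analysis are mechanical, and the explicit $\G$-symmetry of the answer provides an independent check that no sign or indexing error has crept in.
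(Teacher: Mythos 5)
Your overall strategy coincides with the paper's (a determinantal presentation whose determinant is computed with \cite{m2}, then a Macaulay2 smoothness check plus \cite[Theorem~B.2]{dkclass} to get $Y^{\ge 3}_\A=\varnothing$), but the step you yourself flag as delicate is a genuine gap, and it is the central one: there is no global $10\times 10$ matrix of \emph{linear} forms on $\P(V_6)$ whose determinant cuts out $Y_\A$. Such a determinant would have degree $10$, while $Y_\A$ has degree $6$; the intrinsic determinantal model is a morphism of rank-$10$ bundles $\bw2T_{\P(V_6)}(-3)\to \A^\vee\otimes\cO_{\P(V_6)}$, whose source is nontrivial with determinant $\cO(-6)$ --- that is exactly why the degeneracy locus is a sextic, and why your posited $M_\A(x)$ with entries linear in $x_0,\dots,x_5$ cannot exist. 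The paper closes this gap with an affine-chart reduction that your proposal lacks: on $U_0=\{x_0\neq0\}$ write $x=e_0+x_1e_1+\dots+x_5e_5$; the summand $\bw3V_\xi$ in the decomposition~\eqref{deco} is transverse to the graph $\A$, giving $\bw3V_6/\A\isom\bw3V_\xi$, and $x\wedge\bw2V_6=x\wedge\bw2V_\xi$ on $U_0$; composing with $v^{-1}$ yields an endomorphism of $\bw2V_\xi$ whose matrix is $-I_{10}$ plus terms linear in $x_1,\dots,x_5$ only. Its determinant has degree $6$ with constant term $1$, and homogenizing gives~\eqref{sextic_equation}. (A smaller slip: your opening claim that $[x]\in Y_\A$ iff $\omega\mapsto x\wedge\omega \bmod \A$ on $\bw2V_6$ ``fails to be injective'' is wrong as stated --- that map has the $5$-dimensional kernel $x\wedge V_6$ for every $x\neq0$; the correct condition is rank $<10$, i.e.\ kernel of dimension $>5$.)

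For the second assertion your $9\times9$-minors plan is sound in spirit, but since the only matrix of the right degree is the chart matrix, the minors describe only $Y^{\ge2}_\A\cap U_0$, and you would owe a separate argument along $x_0=0$ (other charts, or a $\G$-symmetry argument). The paper avoids this entirely: it computes $\Sing(Y_\A)$ globally as the Jacobian scheme of the homogeneous sextic~\eqref{sextic_equation} and verifies with Macaulay2 that it is a smooth irreducible surface; since $\Sing(Y_\A)=Y^{\ge2}_\A$ once $\A$ has no decomposable vectors, and $\Sing(Y^{\ge2}_\A)=Y^{\ge3}_\A$ by \cite[Theorem~B.2]{dkclass}, this single global computation yields both the smoothness of $Y^{\ge2}_\A$ and the emptiness of $Y^{\ge3}_\A$ (and, as the paper notes, reproves the absence of decomposable vectors). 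If you adopt the chart reduction for the equation and the global Jacobian-ideal check for the singular locus, your argument becomes the paper's.
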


 \begin{proof} The scheme $Y_\A$ is the locus in $\P(V_6)$ where the map
$$   x\wedge \bw2 V_6\lra \bw3 V_6/\A$$
drops rank.\ In the decomposition~\eqref{deco}, the second summand is transverse to $\A$ and we can identify $\bw3 V_6/A$ with $\bw3 V_\xi$.\ Moreover, in the affine open subset $U_0$ of $\P(V_6)$ defined by $x_0\neq 0$, one has $x\wedge \bw2 V_6=x\wedge \bw2 V_\xi$.\ In $U_0$, the scheme $Y_\A$ is therefore the locus where the map
$$   x\wedge \bw2 V_\xi\lra \bw3 V_\xi\xrightarrow{\ v^{-1}\ } \bw2 V_\xi$$
drops rank.\ Concretely,  if $x=e_0+x_1e_1+\dots+x_5e_5$, we  see, using~\eqref{defA} and~\eqref{v2}, that it maps
\begin{equation*}
\begin{aligned}
e_{12}&\longmapsto x\wedge e_{12}=e_{012}+x_3e_{123}+x_4e_{124}+x_5e_{125}\\
&\longmapsto -e_{245}+x_3e_{123}+x_4e_{124}+x_5e_{125}\\
&\longmapsto -e_{12}+x_3e_{14}+x_4e_{34}-x_5e_{35}.
\end{aligned}
\end{equation*}
All in all, using the basis $(e_{12},e_{13},e_{14},e_{15},e_{23},e_{24},e_{25},e_{34},e_{35},e_{45})$ of $\bw2V_\xi$, one sees that $Y_\A\cap U_0$ is defined  as the determinant of the $10\times 10$ matrix
 \begin{equation*}\label{matrixA}
\left(
\begin{smallmatrix}
-1&0&0&0&0&x_5&-x_4&0&0&x_2\\
0&-1&0&0&0&0&0&-x_5&x_4&-x_3\\
x_3&-x_2&-1&0&x_1&0&0&0&0&0\\
0&-x_4&x_3&-1&0&0&0&-x_1&0&0\\
0&x_5&0&-x_3&-1&0&0&0&x_1&0\\
0&0&-x_5&x_4&0&-1&0&0&0&-x_1\\
0&0&0&0&x_4&-x_3&-1&x_2&0&0\\
x_4&0&-x_2&0&0&x_1&0&-1&0&0\\
-x_5&0&0&x_2&0&0&-x_1&0&-1&0\\
0&0&0&0&x_5&0&-x_3&0&x_2&-1
\end{smallmatrix}\right).
\end{equation*}
 We obtain the  equation~\eqref{sextic_equation} by homogenizing this determinant, computed with Macaulay2 (\cite{m2}).\ We then check with Macaulay2 that $\Sing(Y_\A)$ is a smooth   surface (this reproves that $\A$ contains no decomposable vectors and proves in addition that $Y^{\ge 3}_\A$ is empty).
\end{proof}

 
\subsection{The    GM threefold $X_\A^3$}\label{se23}
We keep the notation above.\  By Proposition \ref{yaqs},  $Y^{\ge 3}_\A $ is empty and, since $\A$ is self-dual, so is $Y^{\ge 3}_{\A^\bot}$.\ For all hyperplanes $V_5\subset V_6$, we thus have
\begin{equation}\label{y3vide}
\dim (\A\cap \bw3V_5)\le 2.
\end{equation}
Consider  the hyperplane $V_5\subset V_6$ spanned by $e_0,\dots,e_4$.\ 
From the description~\eqref{defA}, one sees that there is an inclusion
\begin{equation*}
\langle e_{014} +  e_{123}, e_{034}+ e_{124}\rangle \subset \A\cap \bw3 V_5
\end{equation*}
of vector spaces which, because of the inequality~\eqref{y3vide}, is an equality.\ The associated 
 GM variety is therefore smooth of dimension~$3$ (see Section~\ref{se22n}).\ Using the automorphism $\xi(a)$ of $V_6$ that permutes the vectors $e_1,\dots, e_5$, we see that we get   isomorphic GM threefolds if we start from   hyperplanes spanned by $e_0$ and any four vectors among $e_1,\dots, e_5$.\ We denote it by~$X^3_\A$.

Going through the procedure mentioned in Section~\ref{se22n}, A.~Kuznetsov found that $X^3_\A$ is the intersection, in $\P(\bw2 V_5)$,  of the Grassmannian $\Gr(2,V_5)$, the linear space $\P^7$ with equations
$$
x_{03} + x_{12} = x_{04} - x_{23} = 0,
$$
and the quadric with equation
$$
x_{01}x_{02} - x_{13}x_{14} - x_{24}x_{34} = 0.
$$
 
\section{EPW sextics and GM varieties with many automorphisms}\label{sect4}

As in Section~\ref{se1}, let $V_6$ be a $6$-dimensional complex vector space and let $A\subset \bw3V_6$ be a  Lagrangian subspace with no decomposable vectors.\ It defines an integral EPW sextic $Y_A\subset \P(V_6)$.\ As explained in more detail in Appendix~\ref{se41}, there is a canonical double covering $\pi_A\colon \tY_A\to Y_A
$ and, when $Y^{\ge 3}_A=\vide$, the fourfold $\tY_A$ is  a  smooth \hk\ variety of K3$^{[2]}$-type.\

\subsection{Automorphisms of the EPW sextic $Y_\A$}\label{sec43}
We constructed at the end of Section~\ref{se31} an  injection $\G\hra \Aut(Y_\A)$.\  It follows from Proposition~\ref{yaqs}  that   the double EPW sextic $\tY_\A$ is smooth and, by Proposition~\ref{split1}, the group $\Aut(Y_\A)$ is isomorphic to the group~$\Aut_H^s(\tY_\A) $ of symplectic isomorphisms of $\tY_\A$ that preserve the polarization $H$.\

\begin{prop}\label{prop:all_autom_A}
The automorphism group of the Klein  EPW sextic $Y_\A$ is isomorphic to $\G$.
\end{prop}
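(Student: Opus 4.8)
The plan is to transport the problem to the double EPW sextic and settle it by lattice theory. By Proposition~\ref{split1} we have $\Aut(Y_\A)\isom\Aut_H^s(\tY_\A)$, and by Proposition~\ref{yaqs} the fourfold $\tY_\A$ is a smooth \hk\ manifold of K3$^{[2]}$-type, so its second cohomology $\Lambda:=H^2(\tY_\A,\Z)$ carries the Beauville--Bogomolov--Fujiki form, a lattice of signature $(3,20)$. By Verbitsky's Torelli theorem a symplectic automorphism is determined by its action on $\Lambda$; moreover any such automorphism fixes $H^{2,0}$, hence acts trivially on the transcendental lattice $T$, and fixes the polarization class $H$. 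Thus the finite group (see~\eqref{autya}) $G':=\Aut_H^s(\tY_\A)$ embeds into $O(\Lambda)$, fixes $T$ and $H$, and acts faithfully on its negative definite coinvariant lattice. The inclusion $\G\hra G'$ from Section~\ref{se31} is given, and it remains to prove it is onto.

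First I would pin down the invariant and coinvariant lattices of the $\G$-action. The key point is that $\G$ contains an element $\sigma$ of order $11$: the minimal polynomial of $\sigma$ on its coinvariant sublattice is the $11$th cyclotomic polynomial (degree $10$), so that sublattice is a torsion-free $\Z[\zeta_{11}]$-module, whence its rank is $10$ or $20$; and by Mongardi's classification of prime-order symplectic automorphisms of K3$^{[2]}$-type manifolds the order-$11$ case forces rank $20$ (this is exactly what fails for a K3 surface and makes order $11$ rigid). Consequently $\Lambda_\G$ already has the maximal possible rank $20$, so $\Lambda^\G$ has rank $3$ and equals the saturation of $\langle H\rangle\oplus T$. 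Now for any group $G'\supseteq\G$ of symplectic automorphisms one has $\langle H\rangle\oplus T\subseteq\Lambda^{G'}\subseteq\Lambda^\G$, all of rank $3$, whence $\Lambda^{G'}=\Lambda^\G$ and $\Lambda_{G'}=\Lambda_\G=:L$. Thus $\G$ and $G'$ share the \emph{same} rank-$20$ coinvariant lattice $L$, on which both act faithfully.

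It then remains to show that no symplectic group strictly between $\G$ and $O(L)$ occurs, and this is where I expect the real difficulty to lie. A computation identifies the $\G$-representation on $L\otimes\Q$ as the sum of the two inequivalent absolutely irreducible rational $10$-dimensional representations of $\G$ (those occurring in its two permutation actions on $11$ points), which are interchanged by the outer automorphism $\mathrm{Out}(\G)=\Z/2$; in particular $\End_\G(L\otimes\Q)=\Q\times\Q$. I would conclude by invoking the classification of maximal finite symplectic automorphism groups of K3$^{[2]}$-type manifolds (equivalently, the embedding of $L$ into the Leech lattice), which attaches to the maximal-corank lattice $L$ a unique maximal group that one checks to be $\G$; since $\G\subseteq G'$ and $G'$ is symplectic with coinvariant lattice $L$, this gives $G'=\G$. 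The crux is precisely this last identification: one must rule out a proper overgroup, for instance by showing $\G$ is normal in $G'$ so that $G'\subseteq N_{O(L)}(\G)$ is controlled by $\mathrm{Out}(\G)=\Z/2$ together with the small centralizer $\Q\times\Q$, and then excluding the remaining outer involution because it reverses the orientation of the positive $3$-space (equivalently, it is not induced by an element of $\PGL(V_6)$ preserving $\A$, as in~\eqref{autya}). Verifying this realizability obstruction is the main obstacle.
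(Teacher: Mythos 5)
Your first two paragraphs correctly reproduce the paper's setup: by Proposition~\ref{split1} it suffices to show $\Aut_H^s(\tY_\A)\isom\G$, and by Theorem~\ref{thc1} and Corollary~\ref{th14} the presence of the order-$11$ symplectic automorphism pins down the rank-$20$ coinvariant lattice $\SN=H^\bot\cap\Pic(\tY_\A)$, on which any overgroup $G'\supseteq\G$ of symplectic, $H$-fixing automorphisms acts faithfully with the same rank-$3$ invariant lattice. Up to that point you are on the paper's track. But the decisive step --- ruling out a proper overgroup of $\G$ inside $O(\SN)$ --- is genuinely missing, and you say so yourself (``Verifying this realizability obstruction is the main obstacle''). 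Neither of your suggested routes is carried out: there is no off-the-shelf ``classification of maximal finite symplectic automorphism groups of K3$^{[2]}$-type manifolds'' invoked in a citable form that attaches a unique maximal group to $L$, and the normality $\G\trianglelefteq G'$, on which your estimate $G'\subseteq N_{O(L)}(\G)$ rests, is merely asserted; a priori nothing prevents $G'$ from being a larger group containing $\G$ non-normally (for instance, $\G\isom L_2(11)$ sits maximally and non-normally inside $M_{11}$, $M_{12}$, $M_{22}$, so the normalizer argument cannot even start without extra input). Your identification of $L\otimes\Q$ as the sum of the two rational $10$-dimensional irreducibles is also unverified, and even granted it only bounds the centralizer $\End_\G(L\otimes\Q)$, not the overgroup.

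The paper closes exactly this gap with two ingredients your sketch has no substitute for. First, a discriminant-gluing argument: by Corollary~\ref{th14}, the lattice $H^\bot\isom(-2)^{\oplus2}\oplus E_8(-1)^{\oplus2}\oplus U^{\oplus2}$ contains $\Tr(\tY_\A)\oplus\SN$ with finite index, the extension being generated by two glue vectors $\frac{a_i+b_i}{11}$ with $a_i\in\Tr(\tY_\A)\isom(22)^{\oplus2}$ and $b_i\in\SN$ of divisibility $11$; since every $g\in\Aut_H^s(\tY_\A)$ preserves $H^\bot$ and $\Tr(\tY_\A)$, one deduces $g(b_i)=b_i+11c_i$ with $c_i\in\SN$, i.e.\ $g$ acts \emph{trivially} on $\Disc(\SN)\isom(\Z/11\Z)^2$. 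Second, the citation that finishes the proof: by H\"ohn--Mason \cite[Table~1, line~120]{HM}, the group of isometries of $\SN$ acting trivially on $\Disc(\SN)$ is precisely $\G$. Without an analogue of these two steps, your argument establishes the correct lattice-theoretic framework and the inclusion $\G\subseteq\Aut(Y_\A)$, but not the equality that is the content of the proposition.
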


\begin{proof}
It is enough to prove that   $  \Aut^s_H(\tY_\A) $ is isomorphic to $\G$.\ Let $g\in\Aut^s_H(\tY_\A)$.\ 
It acts on the orthogonal of $H$ in $\Pic(\tY_\A )$ which, by Corollary \ref{th14}, is the rank-20 lattice~$\SN $ discussed in Section~\ref{secc1} and the action  is faithful.\ Let   us prove that $g$ acts trivially on the discriminant group~$\Disc(\SN)$.\ 

By Corollary~\ref{th14}, the lattice $H^\perp\isom (-2)^{\oplus 2}\oplus E_8(-1)^{\oplus 2}\oplus U^{\oplus 2}\subset H^2(\tY_\A,\Z)$ (see~\eqref{defhperp}) primitively contains the lattices $\Tr(\tY_\A)\isom (22)^{\oplus 2}$ and $\SN$ and it is a finite   extension of their direct sum.\ This extension is obtained by adding to $\Tr(\tY_\A)\oplus\SN$ two elements $\frac{a_1+b_1}{11}$ and $\frac{a_2+b_2}{11}$, where~$a_1$ and~$a_2$ are  orthogonal generators of $\Tr(\tY_\A)$ of square $22$, and  $b_1$ and~$b_2$ are classes in~$\SN$ of divisibility 11.\ Since $g$ preserves $H^\perp$ and  $\Tr(\tY_\A)$, it follows readily  that $g(b_i)=b_i+11c_i$ for some $c_i\in\SN$, which implies that $g$ acts trivially on $\Disc(\SN)$, as claimed.

The proposition follows since, by \cite[Table 1, line 120]{HM}, the  group of isometries of $\SN$ that act trivially on  $\Disc(\SN)$ coincides with $\G$.\  
\end{proof}

\subsection{GM varieties with many symmetries}\label{sec42n}

Proposition~\ref{prop:all_autom_A} can be used to determine the automorphism groups of the GM varieties constructed from the Lagrangian $\A$, and in particular the varieties~$X^5_\A$ and $X^3_\A$ defined in Sections~\ref{se31} and~\ref{se23}.\ By~\eqref{autxa}, all we have to do is determine the stabilizers of hyperplanes in $V_6$ under the $\G$-action.\ Since this action is conjugate to its dual, we might as well determine the stabilizers of  lines in $V_6=\C e_0\oplus V_\xi$.\ We    proceed in three steps:
\begin{itemize}
\item   determine  the various fixed-point sets of all subgroups of $\G$, listed up to conjugacy in \cite[Figure~1]{bue};
\item  compute the stabilizers of these fixed-points;
\item  find in which stratum $Y_\A^\ell$ they lie.
\end{itemize}

A first useful remark is the following: {\em if $g\in\G$ is a nontrivial element of odd order, the   fixed-point set of $g$ in~$Y_\A$ is finite.}\ Indeed, we will see below by a case-by-case analysis that 
the fixed-point set $\Fix(g)$ of $g$ in $\P(V_6)$ is a union of lines and isolated points.\ Assume that a line $\Delta\subset \Fix(g)$ is contained in $Y_\A$.\
By Proposition~\ref{split1}, $g$ lifts to a symplectic automorphism~$\tilde g$ of~$\tY_\A$ which commutes with its covering involution $\iota$.\ For any $x$ in the curve $ \pi_\A^{-1}(\Delta)\subset \tY_\A$, one has either $\tilde g(x)=x$ or $\tilde g(x)=\iota(x)$, hence $\tilde g^2(x)=x$.\ The curve 
$ \pi_\A^{-1}(\Delta)\subset \tY_A$ is therefore contained in the fixed-point set of the nontrivial symplectic automorphism $\tilde g^2$.\ But this fixed-point set is, on the one hand, a disjoint union of surfaces and isolated points and, on the other hand, contained in $ \pi_\A^{-1}(\Fix(g^2))$, whose dimension is at most $1$ (because~$g^2$ is again nontrivial of odd order), so we reach a contradiction.\ Moreover, $1$ is not an eigenvalue for the action of $g$ on the tangent space at a fixed-point, hence any line in $\Fix(g)$ meets $Y^1_\A$ and $Y^2_\A$ transversely.

Furthermore, since $g$ itself can be written as a square, we see that the fixed-point set of its symplectic lift $\tilde g$  (which has the same order) is the inverse image in $\tY_A$ of $\Fix(g)$.

Our second tool will be the Lefschetz topological fixed-point theorem for an automorphism~$g$  {\em with finite fixed-point set} on the regular surface $Y_\A^{ \ge2}$.\ This theorem  reads
\begin{equation*}
\#(\Fix(g)\cap Y_\A^{ \ge2})=\sum_{i=0}^4(-1)^i\Tr (g^*\vert_{H^i(Y_\A^{ \ge2},\Q)})=2+\Tr (g^*\vert_{H^2(Y_\A^{ \ge2},\Q)}).
 \end{equation*}
 The group $ \G$ acts on $\A$ (via the representation $\bw2 V_\xi$) and $Y_A^{ \ge2}$ and, by Proposition~\ref{propc7}, the isomorphism $H^2(Y_\A^{ \ge2},\C)\isom \bw2(\A\oplus \bar \A)$ from~\eqref{h1} is equivariant for these actions.\ Using the fact that the representation $\bw2 V_\xi$ is self-dual and  the formula
$$\chi_{\sbw2(\sbw2 V_\xi\oplus \sbw2 V_\xi)}(g)
=2\chi_{\sbw2(\sbw2 V_\xi)}(g)+\chi_{  \sbw2V_\xi\otimes   \sbw2V_\xi}(g)=2\chi_{ \sbw2 V_\xi}(g)^2-\chi_{  \sbw2V_\xi}(g^2),
$$
one can then compute the  numbers of fixed points of $g$ in $Y_\A^{ \ge2}$ given in Table~\ref{tabf}.

The Lefschetz theorem was also used to the same effect in \cite[Section~6.2]{monphd} on \hk\ varieties of  K3$^{[2]}$-type.\ It gives, for symplectic automorphisms of $\tY_\A$ of prime order, the  number (when finite) of fixed-points on $\tY_\A$.\ By the remark made above, this is the number of fixed points on $Y^{\ge 2}_\A$ (which we get from  Table~\ref{tabf}) plus  twice the number of fixed points on~$Y^1_\A$.\ So we get from \cite[Section~6.2]{monphd} the following numbers (except for the information between parentheses (when~$g$ has order $2$ or $6$), which will be a consequence of the discussion below---where it will not be used).
          \begin{table}[h]
\renewcommand\arraystretch{1.5}
 \begin{tabular}{|c|c|c|c|c|c|c|c|c|}
 \hline
order of $g$&$11$&$5$&$6$&$3$&$2$
\\
 \hline
$\# (\Fix (g)\cap Y^{ \ge2}_\A)$
 &$5$&$2 $&$ 3$ &$3 $ & $(\dim 1)$ 
\\
 \hline
$\# (\Fix (g)\cap Y_\A)$
 &$5$&$8 $&$(7) $&$ 15$& $(\dim 2)$ 
\\
 \hline
 \end{tabular}
\vspace{5mm}
\captionsetup{justification=centering} 
\caption{Number (when finite) of fixed-points on\\ the surface $Y^{ \ge2}_\A$ and  the fourfold $Y_\A$}\label{tabf}
\end{table}
\vspace{-5mm}

We will see in the discussion below that these sets   are in fact always finite, except when~$g$ has order 2.\ We can now go through the list of  all subgroups of $\G$ from \cite[Figure~1]{bue} and determine  their various fixed-point sets.\ We will use the notation and results of Appendix~\ref{sea2}.

 \subsubsection{The subgroups $\G$ and $ \Z/11\Z\rtimes \Z/5\Z$}\label{sec421} The subgroups $ \Z/11\Z\rtimes \Z/5\Z$ of $\G$ are all conjugate to the subgroup  generated by the elements $a$ and  $c$ of $\G$.\
 We see from~\eqref{real} that their   only fixed-point   is~$[e_0]$.\ It is on $Y_\A^0$ hence defines a GM fivefold,~$X^5_\A$, already defined in Section~\ref{se31}, with automorphism group $\G$.
 
  \subsubsection{The  subgroups $ \Z/11\Z$}\label{sec422} The subgroups $ \Z/11\Z $ of $\G$ are all conjugate to the subgroup  generated by the element  $c$ of $\G$.\
We see from~\eqref{real} that there are $6$  fixed-points: the point~$[e_0]$ (on~$Y_\A^0$) and 5 other points.\  For these $5$  points, which are all in the same~$\G$-orbit, the stabilizers are exactly $ \Z/11\Z$ (because the only nontrivial oversubgroups are  $ \Z/11\Z\rtimes \Z/5\Z$ and $\G$).\ Furthermore, using Table~\ref{tabf}, one sees that they are in $Y_\A^2$ (this was already observed in Section~\ref{se23}).\
  So we get isomorphic GM threefolds,~$X^3_\A$, already defined in Section~\ref{se23},  with  automorphism groups~$ \Z/11\Z$.

 \subsubsection{The  subgroups $ \Z/3\Z$, $ \Z/6\Z$, and $D_{12}$}\label{sec423}
 The elements of order 6 of $\G$ are all conjugate to the element  $b$ of $\G$.\
Since its character in the representation $\xi$ is $1$, it acts on $V_\xi$ with eigenvalues $1,\zeta_6,\zeta_6^2,\zeta_6^4,\zeta^5_6$, for which we choose eigenvectors $w_0,w_1,w_2,w_4,w_5$.\ The fixed-point set of $b$   consists of the line~$\Delta_6=\langle [e_0],[w_0]\rangle$   and the $4$ isolated points  $[w_1]$, $[w_2]$, $[w_4]$, $[w_5]$.\ Any involution $\tau$ in $\G$ that, together with~$b$, generates a dihedral group $D_{12}$, exchanges the eigenspaces corresponding to conjugate eigenvalues.\ Looking at the subgroup pattern of $\G$, one sees that the stabilizers of the $4$ isolated points are~$ \Z/6\Z$, whereas those of points of $\Delta_6\moins\{[e_0]\}$ are $D_{12}$ (a maximal proper subgroup).\ 

The fixed-point set of an element of $\G$ of order 3 (such as~$b^2$; they are all conjugate) is the union of~$\Delta_6$ and two other disjoint lines,~$\Delta_3=\langle [w_1],[w_4]\rangle$ and~$\Delta'_3=\tau(\Delta_3)=\langle [w_2],[w_5]\rangle$.\ 
The fixed-point set of the subgroup $D_{6}=\langle b^2,\tau\rangle$ is therefore the line $\Delta_6$.\  

Consider now the isomorphism of representations $v\colon \bw2V_\xi\isomto \bw3  V_\xi$ from~\eqref{defv}.\ Looking at the eigenspaces for the action of $b$, we see that we can write
$$ 
v(w_0\wedge w_2)=\alpha w_1\wedge w_2\wedge w_5 
$$
for some $\alpha\in \C$.\ By definition of $\A$, this implies $w_2\wedge (e_0\wedge w_0-\alpha w_1\wedge w_5)\in \A$.\ Similarly, one can write
$$ 
v(w_2\wedge w_5)= \beta   w_1\wedge w_2\wedge w_4+\gamma w_0\wedge w_2\wedge w_5,
$$
for some $\beta,\gamma\in \C $, so that $w_2\wedge (e_0\wedge w_5+\beta   w_1 \wedge w_4+\gamma w_0 \wedge w_5)\in \A$.\ This proves that $[w_2]$ is in~$Y^{\ge2}_\A$, and so is 
$[w_4]=\tau([w_2])$.

 Consider the length-$18$ scheme $\Fix(g^2)\cap Y_\A=Y_\A\cap (\Delta_6\cup \Delta_3\cup \Delta'_3)$.\ We see from  Table~\ref{tabf}   that it has  15 points, 3 of them in $Y^{\ge2}_\A$ (hence nonreduced) and fixed by $g$, therefore $12$ of them in $Y^1_\A $ (reduced by the remark made above), none fixed by $g$.\ 
 Since the set $\Fix(g^2)\cap Y^{\ge2}_\A$ is $\tau$-invariant and contains $[w_2]$ and $ [w_4]$, 
 and
 $g$ acts as an involution with no fixed-points on the set $\Fix(g^2)\cap Y^1_\A\cap \Delta_3$, whose cardinality is thus even, we see that 
 each line  $\Delta_6$, $ \Delta_3$, $ \Delta'_3$ contains a single point of $Y^{\ge2}_\A$ and $4$ points of~$Y^1_\A$;   the   points $[w_1]$ and  $[w_5]$  are in $Y^{0}_\A$.\ In particular, the set $ \Fix (g)\cap Y_\A$ has 7 points, as claimed in Table~\ref{tabf}.

 So altogether, we get GM varieties of dimensions $3$, $4$, or $5$,    with  automorphism groups $\Z/3\Z$,  of dimensions $3$  or $5$   with  automorphism groups~$\Z/6\Z$, and  of dimensions $3$ or $4$   with  automorphism groups $D_{12}$, and we see that  no GM varieties $X_{\A,V_5}$ have 
 automorphism groups the dihedral group~$D_{6}$ or the alternating group~$\gA_5$.
 
\subsubsection{The  subgroups $ \Z/5\Z$ and $D_{10}$}  The subgroups $ \Z/5\Z $ of $\G$ are all conjugate to the subgroup  generated by the element  $a$ of $\G$.\
Since its character is $0$, it acts on $V_\xi$ with eigenvalues $1,\zeta_5,\zeta_5^2,\zeta_5^3,\zeta_5^4$.\ Its fixed-point set in $\P(V_6)$ therefore consists of a line~$\Delta_5$ passing through~$[e_0]$  and~$4$ isolated  points.\ Any involution $\tau$ in $\G$ that, together with~$a$, generates a dihedral group~$D_{10}$, exchanges the eigenspaces corresponding to conjugate eigenvalues.\ Looking at the subgroup pattern of $\G$, one sees that the stabilizers of the $4$ isolated points are~$ \Z/5\Z$, whereas those of points of~$\Delta_5$ contain $D_{10}$.\ Since we saw above that $\gA_5$-stabilizers are not possible, the stabilizers are therefore ~$D_{10}$ for all points of $\Delta_5\moins\{[e_0]\}$.

 Since $\# (\Fix (g)\cap Y_\A)=8$ (Table~\ref{tabf}), one sees that the line $\Delta_5$ meets $Y_\A$ in only 4 points.\ Since $Y^1_\A\cap \Delta_5$ is reduced, at least one of them must be in~$Y_\A^{\ge2}$.\ Among the $4$ isolated  fixed-points, the involution $\tau$ acts with no fixed-points on    the set of those that are in $Y_\A^{\ge2}$, hence its cardinality is even.\ Since $\# (\Fix (g)\cap Y_\A^{\ge2})=2$ (Table~\ref{tabf}), the only possibility is that $\Delta_5$ contain~$2$ points in~$Y_\A^1$ and 2 points in~$Y_\A^2$, and the $4$ isolated points are in~$Y_\A^1$.\ So altogether, we get GM fourfolds with  automorphism groups~$\Z/5\Z$ and 
 GM varieties of dimensions 3, 4, or~$5$ with  automorphism groups $D_{10}$.\

 \subsubsection{The  subgroups $ \Z/2\Z$, $ (\Z/2\Z)^2$, and $\gA_4$} Since its character is $1$, any order-2 element $g$ of~$\G$  acts on $V_\xi$ with eigenvalues $1,1,1,-1,-1$.\ Its fixed-point set in $\P(V_6)$ therefore consists of the disjoint union of a 3-space $\P(V_4)$ passing through $[e_0]$ and a  line $\Delta_2$.\ Double EPW sextics  with a symplectic involution were studied in \cite[Theorem~5]{cam} and \cite[Theorem~6.2.3]{monphd}: they prove that the fixed-point set is always the union of a smooth K3 surface and $28$ isolated points.\ By \cite[Proposition~17]{cam}   (which holds under some generality assumptions  which are satisfied by~$\A$ because it contains no decomposable vectors), we obtain:
 \begin{itemize}
\item $\Fix(g)\cap Y_\A$ is the union of a smooth quadric $Q$ and a Kummer quartic $S$, both contained in $\P(V_4)$, and the 6 distinct points of $Y_\A\cap \Delta_2$;
\item $\Fix(g)\cap Y_\A^{\ge2}$ is contained in $E_2$ and is the disjoint union of the  smooth curve $Q\cap  S$ and the $16$ singular points of $S$.\end{itemize}
The fixed K3 surface in $\tY_\A$ mentioned above is a double cover of $Q$ branched along $Q\cap  S$.\ The   images in $Y_\A$ of the  $28$ fixed-points are the $6$ points of $Y_\A\cap \Delta_2$ and the $16$ singular points of $S$.

The fixed-point set of any subgroup $ (\Z/2\Z)^2$ of $\G$ is a plane~$\Pi_4$ passing through $[e_0]$ and~$3$ isolated points.\ This plane is contained in $\P(V_4)$ and  contains the line $\Delta_6$ fixed by any $D_{12}$ containing $ (\Z/2\Z)^2$.\ For points in $\Pi_4\moins \Delta_6$ and the $3$ isolated points, the stabilizers are either $ (\Z/2\Z)^2$ or~$\gA_4$.\ 

 As an $\gA_4$-representation, $V_6$ splits as the direct    sum of the  $3$ characters (which span the plane~$\Pi_4$) and the one irreducible representation of dimension~$3$.\ It follows that the 
  fixed-point set of any 
 $\gA_4$ containing $ (\Z/2\Z)^2$ has $3$ points (corresponding to  the  $3$ characters), all in $\Pi_4$.\ One of them is~$[e_0]$ and the stabilizer of the other two is indeed $\gA_4$.

 The plane $\Pi_4$ meets $Y_\A$ along the union of the  conic $\Pi_4\cap Q$ and the  quartic curve $\Pi_4\cap S$.\ Since the $1$-dimensional part of $\Fix(g)\cap Y_A^{\ge2}$ is the  smooth octic curve   $Q\cap S$, its intersection  with the plane~$\Pi_4$ is finite nonempty.\ So we get points in $\Pi_4\moins \Delta_6$ (with stabilizers $(\Z/2\Z)^2$) in each of the strata.

Finally, the two fixed-points of~$\gA_4$ are not in $Y_\A$: if they were, we would obtain a point of $\tY_\A$ fixed by a symplectic action of $\gA_4$; however there are no representations of $\gA_4$ in $\Sp(\C^4)$ without trivial summands, so there are no points in $\tY_\A$ fixed by $\gA_4$. Therefore, we only get GM varieties of dimension~$5$ with automorphism groups~$\gA_4$.

 We sum up our results in a table:
           \begin{table}[h]
\renewcommand\arraystretch{1.5}
 \begin{tabular}{|c|c|c|c|c|c|c|c|c|c|c|c|c|}
 \hline
aut. groups&$\G$&$\Z/11\Z$&$D_{12}$&$\Z/6\Z$&$\Z/3\Z$&$D_{10}$&$\Z/5\Z$&$\gA_4$&$(\Z/2\Z)^2$&$\Z/2\Z$&$\{1\}$
\\
 \hline
$\dim(X_{\A,V_5})$
 &$5$& $3 $& $ 3$, $4$ & $ 3$,  $5$ &$ 3$, $ 4$, $5$&$ 3$, $4$, $5$&  $ 4$& $5$& $ 3$, $4$, $5$& $ 3$, $4$, $5$&$ 3$, $4$, $5$
\\
 \hline
 \end{tabular}
\vspace{5mm}
\captionsetup{justification=centering}
\caption{Possible automorphisms groups of (ordinary) GM varieties associated with the  Lagrangian $\A$}
\label{tabaut}
\end{table}

\vspace{-5mm}

\subsection{EPW sextics with an automorphism of order $11$}\label{sect44}

We  use the injectivity of the period map~\eqref{defp} to characterize quasi-smooth EPW sextics with an automorphism of prime order at least~$11$. 

\begin{theo}\label{th47}
The only quasi-smooth EPW sextic with an automorphism of prime order~$p\ge11$ is the EPW  sextic $Y_\A$, and $p=11$. 
\end{theo}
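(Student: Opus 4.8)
The plan is to transfer the problem to the double EPW sextic and use the injectivity of the period map. Let $Y_A$ be a quasi-smooth EPW sextic carrying an automorphism $g$ of prime order $p\ge 11$. Quasi-smoothness means $Y_A^{\ge3}=\vide$, so the canonical double cover $\tY_A$ is a smooth hyperk\"ahler fourfold of K3$^{[2]}$-type. By Proposition~\ref{split1} we have $\Aut(Y_A)\isom\Aut^s_H(\tY_A)$, so $g$ lifts to a symplectic automorphism $\tilde g$ of $\tY_A$ of the same prime order $p$ preserving the polarization $H$. First I would bound the order: by the classification of symplectic automorphisms of prime order on manifolds of K3$^{[2]}$-type (\cite[Section~6.2]{monphd}), the only primes that can occur are $2,3,5,7,11$, so $p\ge11$ forces $p=11$, which is the second assertion.

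Next I would pin down the lattice-theoretic invariants of $\tilde g$. Since $\tilde g$ is symplectic of order $11$, it fixes the symplectic class in $H^{2,0}(\tY_A)$, so the invariant lattice $T_{\tilde g}=H^2(\tY_A,\Z)^{\tilde g}$ contains $H^{2,0}\oplus H^{0,2}$ and the polarization $H$, while the coinvariant lattice $S_{\tilde g}=T_{\tilde g}^\perp$ is negative definite and carries a $\Z[\zeta_{11}]$-module structure, whence $\rank S_{\tilde g}\in\{10,20\}$. Using the holomorphic Lefschetz fixed-point formula (equivalently the count of $5$ fixed points recorded in Table~\ref{tabf}) I would rule out the rank-$10$ case and identify $S_{\tilde g}$ with the rank-$20$ lattice $\SN$ of Section~\ref{secc1}, as in \cite[Section~6.2]{monphd}; consequently $T_{\tilde g}$ is positive definite of rank $3$ and the primitive embedding $S_{\tilde g}\hra H^2(\tY_A,\Z)$ is unique up to isometry.

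The heart of the argument is the resulting rigidity of the period. Because $\tilde g$ is symplectic, $S_{\tilde g}$ is purely of type $(1,1)$, so the transcendental lattice is $\Tr(\tY_A)=H^\perp\cap T_{\tilde g}$, a positive-definite lattice of rank $2$. Its Hodge structure is then forced: on a rank-$2$ positive-definite lattice the complexified quadratic form has exactly one pair of complex-conjugate isotropic lines, and $H^{2,0}$ must be one of them. Thus the full polarized marked period of $\tY_A$ is uniquely determined by the lattice data and coincides with that of $\tY_\A$ (computed in the proof of Proposition~\ref{prop:all_autom_A}). By the injectivity of the period map~\eqref{defp}, $\tY_A$ is isomorphic to $\tY_\A$, hence $Y_A\isom Y_\A$.

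I expect the main obstacle to be the lattice bookkeeping of the second paragraph: proving $\rank S_{\tilde g}=20$, so as to exclude a positive-dimensional family coming from a rank-$10$ coinvariant lattice, and verifying that the polarization class $H$ inside $T_{\tilde g}$ (and therefore $\Tr(\tY_A)=H^\perp\cap T_{\tilde g}$) is determined up to isometry. Once these are settled, the rank-$2$ transcendental Hodge structure is automatically rigid and the period is a single point, so injectivity of the period map yields uniqueness.
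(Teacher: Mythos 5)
Your route is the paper's: lift $g$ through Proposition~\ref{split1} to a polarized symplectic automorphism of $\tY_A$, get $p=11$ and the coinvariant lattice $\SN$ from the known classification (the paper packages this as Theorem~\ref{thc1} and Corollary~\ref{th14}), observe that the transcendental lattice is then rank~$2$ and positive definite, so its complexification has a single pair of conjugate isotropic lines, and conclude by injectivity of the period map~\eqref{defp}. The genuine gap is exactly at the step you defer as ``lattice bookkeeping,'' and it is not bookkeeping: the period map takes values in $\widetilde O(h^\bot)\backslash \Omega_h$, so you need the embedding of the lattice data into $h^\bot$ to be unique up to the \emph{stable} orthogonal group $\widetilde O(h^\bot)$ (isometries acting trivially on $\Disc(h^\bot)\isom(\Z/2\Z)^2$), not merely ``up to isometry'' as you assert. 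Uniqueness up to $O(h^\bot)$ alone would leave open two distinct $\widetilde O(h^\bot)$-orbits of candidate periods, hence possibly two non-isomorphic sextics, and the theorem would not follow. The paper closes this with a dedicated lemma: by Nikulin's theory, a primitive embedding of $T=\Tr(\tY_A)\isom(22)^{\oplus 2}$ into $h^\bot$ with complement $\SN$ is specified by an isometry $u\colon K_T\isomto K_{h^\bot}$ of discriminant subgroups; requiring the complement to have discriminant group $(\Z/11\Z)^2$ forces $K_T$ to be the $2$-torsion of $\Disc(T)$ and $K_{h^\bot}=\Disc(h^\bot)$, leaving exactly two choices of $u$, which are interchanged by the involution of $h^\bot$ swapping the two $(-2)$-factors, whence any two embeddings differ by an element of $\widetilde O(h^\bot)$. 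Note also that your formulation via the embedding $S_{\tilde g}\hra H^2(\tY_A,\Z)$ does not by itself track the polarization class $H$; this is why the paper works with $T\hra h^\bot$ and pins down $T$ itself by using $H\in T_{\tilde g}$, $q_{BB}(H)=2$, to exclude one of the two possible invariant lattices in Theorem~\ref{thc1}.

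Two smaller points. First, invoking Table~\ref{tabf} to rule out $\rank S_{\tilde g}=10$ is circular: that table is computed for the specific Klein sextic $Y_\A$, whereas your $Y_A$ is arbitrary; the correct source is the general classification (\cite[Lemma~3.5]{mon2}, \cite[Proposition~1.2]{mon3}, \cite[Theorem~7.2.7]{monphd}), which gives $S_{\tilde g}\isom\SN$ of rank $20$ directly. Second, your final rigidity step should be phrased as in the paper: the set $\P(T\otimes\C)\cap\Omega_h$ consists of two complex-conjugate points, one on each connected component of $\Omega_h$, and these map to the \emph{same} point of the quotient $\widetilde O(h^\bot)\backslash\Omega_h$ --- this last identification is needed, since a priori the period of $\tY_A$ could land on either component.
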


\begin{proof} 
Let $Y_A$ be a quasi-smooth EPW sextic with an automorphism $g$ of prime order~$p\ge11$.\
By Proposition~\ref{split1}, $g$   lifts  to a symplectic automorphism of the same order  of the smooth double EPW sextic $\tY_A$  which fixes the polarization~$H$.\ By   Corollary~\ref{th14}, the transcendental lattice $\Tr(\tY_A)$ is isomorphic to the lattice $T:=  (22)^{\oplus 2}$ and is  primitively embedded in the lattice~$H^\bot$, with orthogonal complement isomorphic to $\SN$.

 \begin{lemm}
Any two primitive embeddings of $T$ into the lattice $h^\bot$ with orthogonal complements isomorphic to $\SN$ differ by an isometry in~$\widetilde O(h^\bot)$.
\end{lemm}

\begin{proof}
 According to~\cite[Proposition~1.5.1]{nik} (see also \cite[Proposition~2.7]{bcs}), 
to primitively embed the lattice $T$ into the lattice $h^\bot$, one needs   subgroups $K_T\subset \Disc(T)\isom  (\Z/22\Z)^2$ and $K_{h^\bot}\subset \Disc(h^\bot)\isom (\Z/2\Z)^2$ and 
an isometry $u\colon K_T\isomto K_{h^\bot}$    for
  the canonical $\Q/2\Z$-valued quadratic forms on these groups.\ The discriminant   of the orthogonal complement is then \mbox{$22^2\cdot 2^2/\Card(K_T)^2$.}\ 
  
  In our case, we want  this orthogonal complement to be $\SN$, with discriminant group $(\Z/11\Z)^2$.\ The only choice is therefore to take $K_T$ to be the $2$-torsion part of $\Disc(T)$ and $K_{h^\bot}=\Disc(h^\bot)$.\ There are only two choices for $u$ and they correspond to switching the two factors of $(\Z/2\Z)^2$.\ Any two such embeddings $T\hra h^\bot$ therefore differ by an isometry of $h^\bot$ and, upon composing with the involution of $h^\bot$ that switches the two  $(-2)$-factors, we may assume that this isometry is in~$\widetilde O(h^\bot)$.
\end{proof}

If we fix any embedding $T\hra h^\bot$ as in the lemma, the period of $\tY_A$ therefore belongs to the (uniquely defined) image in the quotient $ \widetilde O(h^\bot)\backslash \Omega_{h}$ of the set
$ \P(T\otimes \C)\cap \Omega_h$.\ This set consists of   two conjugate points,
one on each component of $\Omega_h$, hence they are mapped to the same point in the period domain $ \widetilde O(h^\bot)\backslash \Omega_{h}$.\ The theorem now follows from the injectivity of the polarized period map, which implies that~$\tY_A$ and $\tY_\A$ are isomorphic by an isomorphism that respects the polarizations.\ Since these polarizations define the double covers $\pi_A$ and $\pi_\A$, this isomorphism descends to an isomorphism between~$Y_A$ and $Y_\A$.
\end{proof}

\begin{coro}\label{cor48}
{\rm(1)} The only smooth double EPW sextic with a symplectic automorphism of prime order~$p\ge11$ fixing the polarization $H$ is the \mon\ double sextic $\tY_\A$, and $p=11$. 

{\rm(2)} The only (smooth ordinary) GM varieties with an automorphism of prime order~$p\ge11$ are the  GM varieties $X_\A^3$ and $X_\A^5$,  and $p=11$.
\end{coro}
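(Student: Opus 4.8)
The plan is to deduce both assertions from Theorem~\ref{th47} by transporting the given automorphism to the EPW sextic $Y_A$ and then invoking the classification of quasi-smooth EPW sextics with an automorphism of order $\ge 11$. For part~(1), suppose $\tY_A$ is a smooth double EPW sextic carrying a symplectic automorphism $\tilde g$ of prime order $p\ge 11$ fixing the polarization $H$. Smoothness of $\tY_A$ is equivalent to $Y_A^{\ge 3}=\vide$, so $Y_A$ is quasi-smooth. By Proposition~\ref{split1} the group isomorphism $\Aut_H^s(\tY_A)\isom\Aut(Y_A)$ preserves orders, so $\tilde g$ descends to an automorphism $g$ of $Y_A$ of the same prime order $p$; Theorem~\ref{th47} then forces $Y_A\isom Y_\A$ and $p=11$. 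As the covering $\pi_A$ is canonically attached to $(Y_A,H)$, this isomorphism lifts to $\tY_A\isom\tY_\A$, proving~(1).

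For part~(2), let $X$ be a smooth ordinary GM variety of dimension $n\in\{3,4,5\}$ with an automorphism of prime order $p\ge 11$, and let $(V_6,V_5,A)$ be its associated triple, so $A$ has no decomposable vectors. By~\eqref{autxa} the automorphism corresponds to $g\in\Aut(Y_A)$ of order $p$ with $g(V_5)=V_5$. Granting that $Y_A$ is quasi-smooth, Theorem~\ref{th47} yields $Y_A\isom Y_\A$ (so we may take $A=\A$) and $p=11$. To finish, recall that $\A$ is self-dual; hence the $g$-invariant hyperplanes $V_5\subset V_6$ correspond, through the self-duality isomorphism $V_6\isom V_6^\vee$, to the $g$-fixed points of $\P(V_6)$, a fixed point lying in the stratum $Y_\A^\ell$ producing a GM variety of dimension $5-\ell$ by~\eqref{yperp}. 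Since $g$ has order $11$ it is conjugate to $c$, and by Sections~\ref{sec421}--\ref{sec422} its fixed points are $[e_0]\in Y_\A^0$, giving the fivefold $X_\A^5$, together with five points of $Y_\A^2$ forming one $\G$-orbit, giving the threefold $X_\A^3$; in particular $g$ has no fixed point in $Y_\A^1$, so no fourfold occurs. Equivalently, Table~\ref{tabaut} exhibits $X_\A^5$ and $X_\A^3$ as the only GM varieties built from $\A$ with automorphism group of order divisible by $11$. Thus $X\isom X_\A^5$ or $X\isom X_\A^3$, and $p=11$.

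The main obstacle is the quasi-smoothness of $Y_A$ used in part~(2): it does \emph{not} follow from smoothness of $X$, since a Lagrangian with no decomposable vectors may have $Y_A^{\ge 3}$ a nonempty finite set while all the associated GM varieties remain smooth. One must therefore exploit the automorphism. I would argue that the finite set $Y_A^{\ge 3}$, being invariant under the group $\langle g\rangle\isom\Z/p\Z$ with $p\ge 11$, must be empty: by Proposition~\ref{split1} the element $g$ lifts to a symplectic automorphism $\tilde g$ of order $p$ of the double cover, and symplectic automorphisms of prime order $\ge 11$ are incompatible with the symplectic singularities that $\tY_A$ would acquire over a nonempty $Y_A^{\ge 3}$---equivalently, they force the rank-$22$, K3$^{[2]}$-type lattice structure of Corollary~\ref{th14}, which presupposes $\tY_A$ smooth. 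Making this precise (via a Lefschetz fixed-point computation on a $g$-equivariant resolution of $\tY_A$, or via the classification of prime-order symplectic automorphisms) is where the genuine work lies; the remainder is a formal consequence of Theorem~\ref{th47} together with the fixed-point bookkeeping already done in Section~\ref{sec42n}.
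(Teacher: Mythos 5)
Your argument is the paper's own, almost line for line: part (1) is a rephrasing of Theorem~\ref{th47} through the isomorphism $\Aut_H^s(\tY_A)\isom\Aut(Y_A)$ of Proposition~\ref{split1}, and part (2) runs through~\eqref{autxa}, Theorem~\ref{th47}, and the fixed-point bookkeeping of Sections~\ref{sec421}--\ref{sec422} (an order-$11$ element fixes $[e_0]\in Y^0_\A$ and five points of $Y^2_\A$ forming a single $\G$-orbit, and no point of $Y^1_\A$, so only $X^5_\A$ and $X^3_\A$ can occur and no fourfold does). Your detour through the self-duality of $\A$ to identify invariant hyperplanes with fixed points is equivalent to the paper's remark that the $\G$-action on $V_6$ is conjugate to its dual. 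One small slip: an element of order $11$ in $\G$ need not be conjugate to $c$ (there are two classes, $[c]$ and $[c^2]$), but the subgroup $\langle g\rangle$ is conjugate to $\langle c\rangle$, which is all the fixed-point analysis uses.

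On the quasi-smoothness of $Y_A$ in part (2), your diagnosis is accurate and worth stating plainly: the bijection of Section~\ref{se22n} only guarantees that $A$ contains no decomposable vectors (smoothness of $X$ in dimension $\ge 3$ is equivalent to that, by \cite[Theorem~3.16]{dkclass}), and such a Lagrangian may have $Y_A^{\ge 3}\neq\vide$, in which case Theorem~\ref{th47} does not apply as stated. You should know, however, that the paper does not fill this in either: its proof of part (2) simply writes ``the quasi-smooth EPW sextic $Y_A$'' without justification, so you have flagged an elision shared by the paper rather than a step it proves and you missed. That said, your proposed repair is not yet an argument: the asserted incompatibility between an order-$11$ symplectic automorphism and the singularities of $\tY_A$ over a nonempty $Y_A^{\ge 3}$ is unproved, and the appeal to Corollary~\ref{th14} is circular, since that corollary rests on Theorem~\ref{thc1}, which presupposes that $\tY_A$ is a smooth hyperk\"ahler fourfold of K3$^{[2]}$-type. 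So as written your part (2), like the paper's, is conditional on this unproved claim; closing it would require genuinely new input (for instance, transporting the prime-order symplectic classification to an equivariant resolution or birational hyperk\"ahler model of the singular double cover), which neither you nor the paper carries out.
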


\begin{proof}
Part (1) is only a rephrasing of Theorem~\ref{th47}, using the isomorphism $  \Aut_H^s(\tY_A)\isomto \Aut(Y_A)$ from Proposition~\ref{split1}.\ For part (2), let $X$ be a (smooth ordinary) GM variety with an automorphism of prime order~$p\ge11$ and let $A$ be an associated Lagrangian.\ By \eqref{autxa}, the quasi-smooth EPW sextic $Y_A$ also has an automorphism  of  order~$p$.\ It follows from Theorem~\ref{th47} that we can take $A=\A$ and that $p=11$.\ The result now follows from Sections~\ref{sec421} and~\ref{sec422}.
\end{proof}

 \subsection{GM varieties of dimensions 4 and 6 with many Hodge classes}\label{sect46}
  GM sixfolds do not appear in the definition given in Section~\ref{se22n}.\ This is because they are {\em special} (as opposed to {\em ordinary}): they are   double covers $\gamma\colon X\to \Gr(2,V_5)$ branched along the smooth intersection of  $\Gr(2,V_5)$ with a quadric (a GM fivefold!).\  To the GM fivefold correspond a Lagrangian $A$ and a hyperplane $V_5\subset V_6$ such that $A\cap \bw3V_5=\{0\}$.\ When $X$ is a GM fourfold, we let 
  $\gamma\colon X\to \Gr(2,V_5)$ be the inclusion (in both cases, $\gamma$ is called the Gushel map in \cite{dkclass}).
  
 One can use the results of~\cite{dkeven} to construct explicit GM varieties $X$ of even dimensions~$2m\in\{4,6\}$ with   groups $\Hdg^m(X):=H^{m,m}(X)\cap H^{2m}(X,\Z)$ of Hodge classes of maximal rank $h^{m,m}(X)=22$ (\cite[Proposition~3.1]{dkeven}).\ 
  The main  ingredient is~\cite[Theorem~5.1]{dkeven}: there is an isomorphism 
$$(H^{2m}(X,\Z)_{00},\smile)\isom (H^2(\tY_A,\Z)_0,(-1)^{ m-1}q_{BB})
$$
of polarized Hodge structures, where  
$$H^{2m}(X,\Z)_{00}:=\gamma^*H^{2m}(\Gr(2,V_5),\Z)^\bot\subset H^{2m}(X,\Z)$$ and $H^2(\tY_A,\Z)_0$ is, in our previous notation,  $H^\bot\subset H^2(\tY_A,\Z)$.\

If we start from the  Lagrangian $\A$ and any hyperplane $V_5\subset V_6$ that satisfies the condition $\dim(A\cap \bw3V_5)=3-m$, we obtain, by Corollary~\ref{th14}, a family  (parametrized by the fourfold~$Y_\A^1 $ when $m=2$ and by the fivefold~$\P(V_6)\moins Y_\A$ when $m=3$) of GM $2m$-folds $X$ that satisfy  
$$
\begin{aligned}
\Hdg^m(X)( (-1)^{m-1})&\isom \gamma^* H^{2m}(\Gr(2,V_5),\Z)( (-1)^{m-1})\oplus \SN\\
&\isom (2)^{\oplus 2}
\oplus \SN \\
&\isom (2)^{\oplus 2}\oplus E_8( -1)^{\oplus 2}\oplus \begin{pmatrix}
-2 &-1   \\
-1  &   -6
\end{pmatrix}^{\oplus 2}\!\!\!\! ,
\end{aligned}$$
a rank-$ 22$ lattice, the maximal possible rank (the last isomorphism follows from the last isomorphism in the statement of Corollary~\ref{th14}).\ Indeed, $\Hdg^m(X)((-1)^{ m-1})$ contains the lattice on the right and the latter has no overlattices (its discriminant group has no nontrivial isotropic elements; see Section~\ref{secc1}).

\begin{rema}\upshape
Take $m=2$.\
The integral Hodge conjecture in degree $2$ for GM fourfolds  was recently proved in \cite[Corollary~1.2]{per}.\ Therefore, we get a family (parametrized by the fourfold~$Y_\A^1 $) of GM fourfolds $X$ such that all classes in $\Hdg^2(X)$ are classes of algebraic cycles.
\end{rema}

\begin{exam}\upshape
Take $m=3$ and $V_5=V_\xi$.\ We get  a GM sixfold $X^6_\A$ which can be defined 
  inside $\P(\C e_{00}\oplus \bw2V_\xi)$ by the quadratic equation
\begin{equation*}
x_{00}^2=x_{12}x_{13}+x_{23}x_{24}+x_{34}x_{35}-x_{45}x_{14}+x_{15}x_{25}
\end{equation*}
(the right side is the equation~\eqref{eqQ} of the $\G$-invariant quadric $\QQ\subset  \P(\bw2V_\xi)$) and  the Pl\"ucker quadrics in the  $(x_{ij})_{1\le i<j\le 5}$ that define  $\Gr(2,V_\xi)$ in $\P(\bw{2}V_\xi)$.\ Since the  equation of $\QQ$ is $\G$-invariant, we see that $\Z/2\Z\times \G$ acts on $\C e_{00}\oplus \bw2V_\xi$ component-wise, and this group is~$\Aut(X^6_\A)$.

The integral Hodge conjecture in degree $3$ is not known in general for GM sixfolds $X$, but it  was   proved in \cite[Corollary~8.4]{per} that the cokernel $V^3(X)$ (the {\em Voisin group}) of the cycle map
$$\CH^3(X)\lra \Hdg^3(X)
$$
is $2$-torsion.\ When $X=X^6_\A$, since the cycle map is surjective for $\Gr(2,V_\xi)$, the image of the cycle map, modulo $\gamma^* H^{2m}(\Gr(2,V_5),\Z)$,  is a $\G$-invariant, not necessarily saturated, sublattice   of $\SN$ of index a power of $2$.
\end{exam}

\section{Irrational GM threefolds}\label{sect5}
 
  \subsection{Double EPW surfaces and their automorphisms}\label{se51}
  
 Let  $Y_A\subset \P(V_6)$ be a quasi-smooth  EPW sextic, where $A\subset \bw3V_6$ is a Lagrangian subspace with no decomposable vectors.\ Its singular locus is the smooth surface $Y_A^{\ge 2}$ and, as explained in Appendix~\ref{sec3}, there is a canonical connected \'etale  double  covering 
  $\tY_A^{\ge 2}\to Y_A^{\ge 2}$.

Let $X$ be  
 any (smooth) GM variety  of dimension $3$ or $5$ associated with $A$ and let~$\Jac(X)$ be its intermediate Jacobian.\ It is a 10-dimensional \av\ endowed with a canonical principal polarization $\theta_X$.\ By \cite[Theorem~1.1]{dkij}, there is a canonical principal polarization $\theta$ on  $\Alb (\tY_A^{\ge 2})$ and a
canonical isomorphism
\begin{equation}\label{jxa}
(\Jac(X),\theta_X)\isomlra (\Alb (\tY_A^{\ge 2}),\theta)
\end{equation}
between $10$-dimensional \ppavs.\ By~\eqref{h1}, the tangent spaces at the origin of these \avs\ are isomorphic to $A$.

The subgroup $\Aut(X)$ of $\Aut(Y_A)$ (see~\eqref{autxa}) acts faithfully on both $\Jac(X) $ and $\Alb (\tY_A^{\ge 2})$ and, by~Proposition~\ref{propc8}, the isomorphism above is $\Aut(X)$-equivariant.

\subsection{Explicit irrational GM threefolds}\label{sec52}
Consider the 
\mon\  Lagrangian $\A$.\ By Proposition~\ref{prop:all_autom_A}, we have $\Aut(Y_\A)\isom  \G$ and the analytic representation  of the action of that group on $\Alb (\tY_A^{\ge 2})$   is, by~Proposition~\ref{propc7}, the  representation of $\G$ on $\A$, that is, the irreducible representation~$\bw2\xi$ of $\G$ (Section~\ref{se31}).\ In particular,~$\G$ acts faithfully on the $10$-dimensional \ppav\ 
\begin{equation}\label{defj}
(\J,\theta):=(\Alb (\tY_\A^{\ge 2}),\theta)
\end{equation}
  by automorphisms that preserve the principal polarization~$\theta$.\  By Lemma~\ref{lb3}, any $\G$-invariant polarization on $\J$ is proportional to~$\theta$.\ 

 \begin{prop}\label{prop61}
The \ppav\ $(\J,\theta)$ is indecomposable.
\end{prop}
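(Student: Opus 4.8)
The plan is to exploit the fact, established just before the statement, that $\G=\PSL(2,\F_{11})$ acts faithfully on $(\J,\theta)$ by polarized automorphisms, with analytic representation the irreducible $10$-dimensional representation $\bw2\xi$. The key structural tool is the behaviour of decompositions under a group of automorphisms whose analytic representation is irreducible. Recall that a principally polarized abelian variety is \emph{decomposable} if it is isomorphic, \emph{as a ppav}, to a product $(\J_1,\theta_1)\times(\J_2,\theta_2)$ with both factors of positive dimension; equivalently, there is a nontrivial abelian subvariety $\J_1\subset\J$ such that $\theta$ restricts to a principal polarization on $\J_1$ and $\J$ splits as the direct sum of $\J_1$ and its complementary abelian subvariety.

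First I would recall the general principle (which is the content of the appendix on decompositions of abelian varieties with automorphisms, Appendix~\ref{b2}, so I may cite the relevant proposition there) that for a ppav $(\J,\theta)$ with a finite group $G$ of polarized automorphisms, any $G$-stable decomposition into $G$-isotypic pieces is forced by the decomposition of the analytic representation $H^0(\J,\Omega^1_\J)^\vee\cong T_0\J$ into isotypic components. The crucial input here is that $T_0\J\cong\A$ carries the \emph{irreducible} representation $\bw2\xi$ of $\G$. Second, I would argue that any decomposition of $(\J,\theta)$ as a product of ppav's yields, after averaging or after applying the uniqueness of the complementary-abelian-subvariety decomposition, a $\G$-stable sub-ppav: concretely, if $\J\isom \J_1\times\J_2$ then the tangent space $T_0\J_1$ would be a nonzero proper $\G$-subrepresentation of $\A$, contradicting irreducibility.

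The subtle point, and what I expect to be the main obstacle, is that an arbitrary decomposition of $(\J,\theta)$ need not a priori be $\G$-invariant: the group $\G$ could permute the factors or send one factor to a different complementary subvariety. So the real work is to promote an arbitrary decomposition into a $\G$-stable one. The clean way to do this is via the uniqueness of the decomposition into indecomposable factors (a Krull--Schmidt type statement for ppav's, which holds because the complementary abelian subvariety with respect to a polarization is canonically determined). Since $\G$ acts by polarized automorphisms, it permutes the set of indecomposable factors of $(\J,\theta)$ and hence permutes the corresponding tangent subspaces inside $\A=T_0\J$. Taking the sum of the tangent spaces over a single $\G$-orbit of factors produces a nonzero $\G$-stable subspace of $\A$; by irreducibility of $\bw2\xi$ this subspace must be all of $\A$, which forces the orbit to consist of factors whose tangent spaces span everything, i.e. there can be only one indecomposable factor. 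Hence $(\J,\theta)$ is indecomposable.

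I would carry this out in the following order: (i) quote from Appendix~\ref{b2} the statement that a ppav admits a decomposition into indecomposable ppav factors that is unique up to permutation and isomorphism, and that a finite group of polarized automorphisms permutes these factors; (ii) observe that this permutation action induces a decomposition of the tangent space $T_0\J\isom\A$ into a direct sum of the tangent spaces of the factors, which is compatible with the $\G$-action in the sense that $\G$ permutes the summands; (iii) invoke the irreducibility of $\bw2\xi$ established in Section~\ref{se31} to conclude that there can be only one summand, hence one factor, hence $(\J,\theta)$ is indecomposable. The only genuinely nontrivial ingredient is (i), the Krull--Schmidt uniqueness together with the functoriality of the complementary subvariety under polarized automorphisms, which is exactly what Appendix~\ref{b2} is designed to supply; everything else is a short representation-theoretic deduction.
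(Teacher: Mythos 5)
Your steps (i) and (ii) coincide with the paper's proof: one invokes the uniqueness, up to reordering, of the decomposition of a \ppav\ into indecomposable factors, so that $\G$ permutes the $m$ factors. But your step (iii) contains a genuine gap. From the fact that the sum of the tangent spaces over a single $\G$-orbit of factors is a nonzero $\G$-stable subspace of $\A$, irreducibility of $\bw2\xi$ only lets you conclude that this orbit spans all of $T_{\J,0}$, that is, that $\G$ permutes the $m$ factors \emph{transitively}; it does not force $m=1$. Irreducible representations can perfectly well be imprimitive: if $V=\mathrm{Ind}_H^{\G}W$ with $H$ of index $m$, then $V$ is a direct sum of $m$ subspaces permuted transitively by the group and may nonetheless be irreducible. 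So your argument, as written, does not exclude a decomposition of $(\J,\theta)$ into $2$, $5$, or $10$ isomorphic indecomposable factors permuted transitively by $\G$, and the sentence ``which forces the orbit to consist of factors whose tangent spaces span everything, i.e.\ there can be only one indecomposable factor'' is a non sequitur.

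The paper closes exactly this gap with a short group-theoretic step that your proposal is missing: the permutation of the factors gives a morphism $u\colon\G\to\gS_m$ with $2\le m\le 10$; its image is nontrivial (otherwise each factor would be $\G$-stable and its tangent space a proper nonzero subrepresentation of the irreducible $\bw2\xi$ --- this is the part of your step (iii) that is correct), hence $u$ is injective because $\G$ is simple; but $\G$ contains elements of order $11$ and $\gS_m$ does not for $m\le 10$. Equivalently, you could rule out the imprimitive case directly by observing that $\G$ has no subgroup of index $2$, $5$, or $10$, for the same order-$11$ reason. One further small inaccuracy: Appendix~\ref{b2} does not actually supply the Krull--Schmidt statement your step (i) attributes to it (it concerns $\Q$-actions, isogeny decompositions, and polarizations on products of elliptic curves); the paper simply invokes the classical uniqueness of the decomposition of a \ppav\ into indecomposable factors without proof, as you may as well.
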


\begin{proof}
If $(\J,\theta)$ is isomorphic to a product of $m\ge 2$ nonzero indecomposable \ppavs, such a decomposition is unique up to the order of the factors hence induces a morphism $u\colon\G\to\gS_m$ (the group $\G$ permutes the factors).\ Since  the analytic representation is irreducible, the image of $u$ is nontrivial  and, the group $\G$ being simple,  $u$ is injective; but this is impossible because~$\G$ contains elements of order $11$ but not $ \gS_m$, because  $m\le 10$.
\end{proof}

We can now prove our  main result.

 \begin{theo}\label{main}
Any smooth GM threefold associated with the Lagrangian $\A$ is irrational.
\end{theo}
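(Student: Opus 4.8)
The plan is to apply the Clemens--Griffiths criterion, which says that a smooth GM threefold $X$ associated with $\A$ is rational only if its intermediate Jacobian, as a principally polarized abelian variety, decomposes as a product of Jacobians of smooth curves. By the canonical isomorphism~\eqref{jxa} and the discussion in Section~\ref{sec52}, the intermediate Jacobian of any such $X$ is isomorphic to the fixed $10$-dimensional \ppav\ $(\J,\theta)$, carrying a faithful $\G$-action by polarization-preserving automorphisms. So it suffices to show that $(\J,\theta)$ is \emph{not} isomorphic to a product of Jacobians of curves.

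First I would record that we already know from Proposition~\ref{prop61} that $(\J,\theta)$ is indecomposable. Therefore, if $(\J,\theta)$ were the Jacobian of a product of curves, it would have to be the Jacobian $\Jac(C)$ of a \emph{single} smooth (possibly reducible, but then decomposable) curve~$C$ of genus~$10$; indecomposability forces $C$ to be irreducible. The key point is then that the $\G$-action on $(\J,\theta)$ would, via the Torelli theorem, force a $\G$-action on the curve~$C$ itself—more precisely, automorphisms of a Jacobian preserving the principal polarization come from automorphisms of the curve, up to the sign $\pm1$ (Torelli for Jacobians). Since $\G=\PSL(2,\F_{11})$ is simple, the induced map $\G\to\Aut(C)/(\pm1)$ is either trivial or injective; the analytic representation $\bw2\xi$ being faithful and nontrivial rules out the trivial case, so $\G$ would embed in $\Aut(C)$.

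Next I would derive a contradiction from the numerics of a $\G$-action on a genus-$10$ curve. The group $\G$ has order~$660$, and by the Riemann--Hurwitz formula applied to the quotient $C\to C/\G$, the genus~$10$ is far too small to admit a faithful action of such a large group except for very constrained ramification data; I would enumerate the possible signatures and show none gives genus~$10$ (the Hurwitz-type bound, together with the structure of the representation of $\G$ on $H^0(C,\Omega_C^1)\isom \bw2\xi$, which must be compatible with the Chevalley--Weil formula for the $\G$-representation on holomorphic differentials). The decisive obstruction is representation-theoretic: the analytic representation of $\G$ on the tangent space of $(\J,\theta)$ is the \emph{irreducible} $10$-dimensional representation $\bw2\xi$, whereas the representation of $\Aut(C)$ on $H^0(C,\Omega_C^1)$ for a curve always contains a copy of any permutation character coming from the quotient map, and in particular cannot be $\G$-irreducible of this type for genus~$10$.

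I expect the main obstacle to be making the last step fully rigorous rather than merely plausible: one must rule out \emph{all} curves of genus~$10$ with a faithful $\G$-action whose differentials realize $\bw2\xi$, not just the generic configurations. The clean way, following \cite{bea2,bea3}, is to combine the irreducibility and rationality (definedness over $\Q$) of $\bw2\xi$—established in the introduction—with the fact that for a Jacobian $\Jac(C)$ the Hodge structure and the $\G$-action are tightly constrained by the Chevalley--Weil formula, which expresses the multiplicity of each irreducible $\G$-representation in $H^0(C,\Omega_C^1)$ in terms of the ramification of $C\to C/\G$; checking that no admissible ramification datum yields exactly the single irreducible $\bw2\xi$ with the genus equal to~$10$ completes the argument. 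This finite case-analysis is where the real work lies, but it is entirely mechanical once the list of conjugacy classes and the character table of $\G$ (recalled in Appendix~\ref{sea2}) are in hand.
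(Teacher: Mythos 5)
Your proposal follows the paper's proof step for step up to the final contradiction: the Clemens--Griffiths criterion, the reduction via Proposition~\ref{prop61} from ``product of Jacobians'' to ``Jacobian of a single smooth genus-$10$ curve $C$'', and the Torelli theorem combined with the simplicity of $\G$ to embed $\G\hra\Aut(C)$ (state this step as the paper does: $\Aut(\Jac(C),\theta_C)$ is $\Aut(C)$ if $C$ is hyperelliptic and $\Aut(C)\times\Z/2\Z$ otherwise, and any morphism $\G\to\Z/2\Z$ is trivial --- your phrase ``$\G\to\Aut(C)/(\pm1)$'' is a slightly garbled version of this). The only genuine divergence is the last step: the paper simply quotes the known bound $|\Aut(C)|\le 432$ for curves of genus $10$ from the LMFDB, which beats $|\G|=660$ at once, whereas you propose a Riemann--Hurwitz/Chevalley--Weil analysis. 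Your route does work and has the virtue of being self-contained, but you have made it heavier than necessary, and one of your assertions is wrong as stated: it is not true that $H^0(C,\Omega^1_C)$ ``always contains a copy of any permutation character coming from the quotient map'' (what is true is that the trivial representation occurs with multiplicity $g(C/\G)$); fortunately this claim, and Chevalley--Weil altogether, are not needed. The bare Riemann--Hurwitz count already closes the argument: writing $18=2g(C)-2=660\bigl(2g'-2+\sum_{i=1}^r(1-\tfrac1{m_i})\bigr)$ with $g'=g(C/\G)$ and $m_i\in\{2,3,5,6,11\}$ (the orders of nontrivial cyclic subgroups of $\G$), positivity forces $g'=0$ and $\sum_{i=1}^r(1-\tfrac1{m_i})=\tfrac{223}{110}$; since each term lies in $[\tfrac12,\tfrac{10}{11}]$, only $r=3$ (requiring $\tfrac1{m_1}+\tfrac1{m_2}+\tfrac1{m_3}=\tfrac{107}{110}$) and $r=4$ (requiring $\sum\tfrac1{m_i}=\tfrac{217}{110}$) are conceivable, and a short enumeration shows neither value is attainable, so no group of order $660$ acts faithfully on a genus-$10$ curve. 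With that finite check actually carried out (you left it as ``mechanical''), your proof is complete; as it stands, the representation-theoretic paragraph is the one gap-prone spot, and you should either delete it or replace it by the computation above.
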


\begin{proof}
Let $X$ be such a   threefold.\ By Proposition~\ref{propc7}, the isomorphism
$(\Jac(X),\theta_X)\isomto ( \J,\theta)$  in~\eqref{jxa}
 is   $\G$-equivariant.\ 
  We follow \cite{bea2,bea3}: to prove that~$X$ is not rational,  we apply the Clemens--Griffiths criterion (\cite[Corollary~3.26]{cg}); in view of Proposition~\ref{prop61}, it suffices to prove that  $( \J,\theta)$ is not the Jacobian of a smooth projective curve.\ 

Suppose $( \J,\theta)\isom (\Jac(C),\theta_C)$ for  some smooth projective curve $C$ of  genus $10$.\ The group~$\G$ then embeds  into  the group  of  automorphisms  of $(\Jac(C),\theta_C)$;  by  the  Torelli  theorem,  this group is isomorphic to $\Aut(C)$ if $C$ is hyperelliptic and to $\Aut(C)\times\Z/2\Z$ otherwise.\  Since any morphism from $\G$ to $\Z/2\Z$ is trivial, we see that $\G$ is a subgroup of $\Aut(C)$.\ This contradicts the fact that the     automorphism group of a curve of genus $10$   has order at most $432$ (\cite{lmfd}).
 \end{proof}

\begin{coro}\label{coro52}
There exists a complete family, with finite moduli morphism, parametrized by the smooth projective surface $Y^{\ge2}_{\A}$, of irrational smooth ordinary GM threefolds.
\end{coro}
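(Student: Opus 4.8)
The plan is to construct the family of GM threefolds as a subfamily of the universal family of GM varieties associated with the fixed Lagrangian $\A$, with base the parameter space of hyperplanes $V_5\subset V_6$ satisfying the GM-threefold condition $\dim(\A\cap\bw3V_5)=2$, and then to identify this parameter space with the surface $Y^{\ge2}_\A$. First I would recall from Section~\ref{se22n} that isomorphism classes of GM threefolds associated with $\A$ correspond to hyperplanes $V_5\subset V_6$ with $\dim(\A\cap\bw3V_5)=5-3=2$, equivalently to points $[V_5]\in\P(V_6^\vee)=\P(V_6)$ lying in the stratum where this dimension equals $2$. By the self-duality of $\A$ established after~\eqref{defA}, the identification $V_6\isomto V_6^\vee$ carries $\A$ to $\A^\bot$, so this locus of hyperplanes is precisely the dual EPW stratum, which under the self-duality isomorphism is identified with $Y^{\ge2}_{\A}$, the surface that Proposition~\ref{yaqs} shows is smooth and irreducible (with $Y^{\ge3}_\A$ empty).

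The second step is to assemble the relativized construction into an actual algebraic family over this surface. Over $Y^{\ge2}_\A$ there is a tautological hyperplane bundle, and the Grassmannian-quadric construction of Section~\ref{se22n} relativizes to produce a flat family $\cX\to Y^{\ge2}_\A$ whose fiber over a point is the GM threefold $X_{\A,V_5}$ cut out, in $\P(\bw2V_5)$, by $\Gr(2,V_5)$, the linear section prescribed by $\A\cap\bw3V_5$, and the associated quadric, exactly as in the explicit description at the end of Section~\ref{se23}. Since $Y^{\ge3}_\A=\vide$, the inequality~\eqref{y3vide} guarantees $\dim(\A\cap\bw3V_5)\le2$ everywhere, and on the open-dense locus where equality holds every fiber is a smooth GM threefold; by Theorem~\ref{main} each such fiber is irrational. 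The word ``complete'' in the statement refers to this family exhausting all GM threefolds arising from $\A$.

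The third step, and the main content, is the claim that the moduli morphism $Y^{\ge2}_\A\to\cM_{\mathrm{GM}}^3$ to the moduli stack (or coarse space) of GM threefolds is finite. Here I would use the correspondence between GM threefolds and triples $(V_6,V_5,\A)$ modulo isomorphism: two hyperplanes give isomorphic GM threefolds exactly when they lie in the same orbit under $\Aut(Y_\A)\isom\G$ (Proposition~\ref{prop:all_autom_A}), via~\eqref{autxa}. Thus the fibers of the moduli morphism are the $\G$-orbits on $Y^{\ge2}_\A$, which are finite because $\G$ is a finite group; hence the morphism is quasi-finite, and being a morphism from a projective surface it is finite. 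This is the step I expect to require the most care: one must verify that the moduli morphism is genuinely \emph{quasi-finite}, i.e.\ that no positive-dimensional family of pairwise-isomorphic GM threefolds sits inside $Y^{\ge2}_\A$ beyond the identifications coming from $\G$. That follows from the fact that, for fixed $\A$, the isomorphism class of $X_{\A,V_5}$ determines the $\Aut(Y_\A)$-orbit of $V_5$ (by the classification of~\cite{dkclass} invoked in Section~\ref{se22n}) together with the finiteness of $\G$, so the fibers are finite and the morphism is finite.
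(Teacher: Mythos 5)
Your proposal is correct and follows essentially the same route as the paper, whose entire proof is ``Theorem~\ref{main} plus \cite[Example~6.8]{dkmoduli}'': the cited example contains exactly what you reconstruct by hand, namely the identification (via the self-duality of $\A$) of the locus of hyperplanes with $\dim(\A\cap\bw3V_5)=2$ with the smooth projective surface $Y^{\ge2}_{\A}\isom Y^{\ge2}_{\A^\bot}$, the relativized family of smooth GM threefolds over it, and the finiteness of the moduli morphism, whose fibers are orbits of the finite group $\Aut(Y_\A)\isom\G$ (in fact only the finiteness of $\Aut(Y_\A)$, valid for every EPW sextic by~\eqref{autya}, is needed), so that quasi-finiteness plus properness of the source gives finiteness. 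One terminological correction: ``complete'' in the statement refers to the base $Y^{\ge2}_{\A}$ being a complete (projective) surface --- the point being the existence of a nowhere-degenerating compact two-dimensional family of smooth Fano threefolds --- not to the family exhausting all GM threefolds arising from $\A$.
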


\begin{proof}
This follows from the theorem and \cite[Example~6.8]{dkmoduli}.
\end{proof}

The theorem  applies in particular to the   GM threefold $X^3_\A$ defined in Section~\ref{se23}.

\begin{coro}\label{coro53}
The   GM threefold $X^3_\A$ is irrational. 
\end{coro}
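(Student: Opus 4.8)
The plan is to deduce Corollary~\ref{coro53} directly from Theorem~\ref{main}. The threefold $X^3_\A$ was constructed in Section~\ref{se23} as a smooth GM threefold associated with the \mon\ Lagrangian~$\A$ (taking $V_5$ to be the hyperplane spanned by $e_0,\dots,e_4$, with $\dim(\A\cap\bw3V_5)=2$, as verified there). Since Theorem~\ref{main} asserts that \emph{any} smooth GM threefold associated with~$\A$ is irrational, the corollary is immediate: $X^3_\A$ is precisely such a threefold.

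Concretely, I would write: by construction (Section~\ref{se23}), $X^3_\A$ is a smooth GM threefold whose associated Lagrangian is~$\A$; hence the irrationality follows at once from Theorem~\ref{main}. There is essentially no additional argument to supply, so the only thing to check is that $X^3_\A$ genuinely falls under the hypothesis of the theorem, which is exactly the content of its defining construction.

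There is no real obstacle here, since this is a one-line specialization of the main theorem. If one wanted to make the statement fully self-contained and not merely quote Theorem~\ref{main}, the substantive input would be the chain established earlier: the isomorphism~\eqref{jxa} identifies the intermediate Jacobian of $X^3_\A$ with the fixed \ppav\ $(\J,\theta)$ from~\eqref{defj}, equivariantly for the induced action; by Proposition~\ref{prop61} this \ppav\ is indecomposable; and the Torelli-plus-automorphism-bound argument in the proof of Theorem~\ref{main} shows $(\J,\theta)$ is not a Jacobian of a curve, so the Clemens--Griffiths criterion forbids rationality. But all of this is already packaged into Theorem~\ref{main}, so the cleanest proof of the corollary is simply to invoke it.

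\begin{proof}
By construction (Section~\ref{se23}), $X^3_\A$ is a smooth GM threefold associated with the Lagrangian~$\A$, so this is a special case of Theorem~\ref{main}.
\end{proof}
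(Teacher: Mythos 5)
Your proposal is correct and matches the paper exactly: the paper derives Corollary~\ref{coro53} as an immediate specialization of Theorem~\ref{main}, noting that $X^3_\A$ was constructed in Section~\ref{se23} as a smooth GM threefold associated with the Lagrangian~$\A$. Your verification that $X^3_\A$ satisfies the theorem's hypothesis (via $\dim(\A\cap\bw3V_5)=2$ and the smoothness check from Section~\ref{se23}) is precisely the content needed, so nothing is missing.
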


\begin{rema}\label{rema53}
It is a general fact that all smooth GM varieties of the same dimension constructed from the same Lagrangian are birationally isomorphic (\cite[Corollary~4.16]{dkclass}); in particular, all threefolds in the family of Corollary~\ref{coro52} are mutually birationally isomorphic.\ 
 \end{rema}
 
 \begin{rema}\label{rema56a}
The Clemens--Griffiths component of a
 \ppav\ is the product of its indecomposable factors that are not  isomorphic to Jacobians of smooth projective curves, and the Clemens--Griffiths component of a Fano threefold is the Clemens--Griffiths component of its intermediate Jacobian; it
  follows from the Clemens--Griffiths method that the  Clemens--Griffiths component of  a Fano threefold  is a birational invariant.\ By Proposition~\ref{prop61}, the Clemens--Griffiths component of the GM threefolds constructed from the Lagrangian~$\A$ is  $(\J,\theta)$; in particular, these threefolds are not birationally isomorphic to any smooth cubic threefold (because their  Clemens--Griffiths components all have dimension $5$).
 \end{rema}

\begin{rema}\label{rema53a}
All GM fivefolds are rational (\cite[Proposition~4.2]{dkclass}).\ We do not know whether 
the smooth GM fourfolds associated with the Lagrangian $\A$ are rational (folklore conjectures say that they should be irrational, because they have no associated K3 surfaces; see Proposition~\ref{assoc}).
 \end{rema}


Let us go back to the
 10-dimensional \ppav\ $(\J,\theta)$ defined by~\eqref{defj}.\ It is acted on faithfully by the group~$\G$, and the associated analytic representation
$\G\to \GL(T_{\J,0})$
is the   irreducible 
 representation $\bw2\xi$ of $\G$ (Sections~\ref{se51} and~\ref{sec52}).\


\begin{prop}\label{prop62}
The abelian variety $\J  $ is  isogeneous to $E^{10}$, for some elliptic curve $E$.
 \end{prop}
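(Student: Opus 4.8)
The plan is to exploit the fact that the analytic representation of $\G$ on the tangent space $T_{\J,0}\isom \bw2\xi$ is irreducible and, crucially, defined over $\Q$ (as asserted in the introduction: the $10$-dimensional representation is irreducible and rational). My strategy rests on the general principle, developed in the decomposition theory of abelian varieties with group actions (Appendix~\ref{b2} of this paper), that when a finite group acts on an abelian variety with an irreducible \emph{rational} analytic representation, the isotypic decomposition forces the abelian variety to be isogenous to a power of a single simple factor. First I would invoke the $\G$-equivariant isotypic decomposition of $\J$ up to isogeny: since $\Q[\G]$-module structure controls the decomposition, and $\bw2\xi$ corresponds to a single rational irreducible representation, $\J$ is isogenous to $B^k$ where $B$ is $\G$-simple and $k$ equals the multiplicity dictated by the representation.

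Next I would pin down the dimension of $B$. The endomorphism algebra $\End^0(B)=\End(B)\otimes\Q$ contains the commutant (Schur algebra) $D$ of the rational irreducible representation $\bw2\xi$ inside $\End_\Q(\bw2\xi)$; by Schur's lemma this is a division algebra over $\Q$. The key numerical computation is that $\dim_\C(\bw2\xi)=10$ and that this representation is absolutely irreducible (it remains irreducible over $\C$, as stated). Absolute irreducibility means the Schur index is trivial and $D=\Q$, so the commutant is just $\Q$. This forces the isogeny factor $B$ to be an elliptic curve $E$ and the multiplicity to be $10$, giving $\J\isog E^{10}$.

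The main obstacle, and the step requiring the most care, will be correctly translating the representation-theoretic data into the isogeny decomposition while accounting for the \emph{polarization} and the complex structure. The subtlety is that the analytic representation $\bw2\xi$ is a \emph{complex} representation, but its realification (or its recognition as the complexification of a rational representation) is what governs the $\Q$-isogeny type via the rational group algebra. I would rely on Proposition~\ref{prop61} (indecomposability) together with the results of Appendix~\ref{b2} to ensure that the abelian variety does not split further than the isotypic decomposition allows, and to confirm that the single isogeny factor is one-dimensional. Concretely, because $\bw2\xi$ is irreducible of dimension $10$ with trivial endomorphism algebra $\Q$, the standard formula for the dimension of the isotypic component shows the factor has dimension $10/10=1$, i.e.\ is an elliptic curve. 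I expect the verification that the Schur algebra is precisely $\Q$ (rather than an imaginary quadratic field or quaternion algebra) to be the delicate point, but this follows directly from the stated absolute irreducibility of $\bw2\xi$ together with its rationality, which rules out complex multiplication on the commutant.
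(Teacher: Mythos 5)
Your proposal is correct and follows essentially the same route as the paper: the paper's proof of Proposition~\ref{prop62} is a one-line appeal to Proposition~\ref{propb1} (an abelian variety of dimension $n$ whose finite group action has irreducible analytic representation defined over $\Q$ is isogenous to the $n$-th power of an elliptic curve), which in turn rests on the Ekedahl--Serre/Kani--Rosen decomposition theory. What you have done is simply unpack the proof of that cited general principle---isotypic decomposition via the simple factor $M_{10}(\Q)$ of $\Q[\G]$ (trivial commutant by absolute irreducibility and rationality of $\bw2\xi$), then the dimension count $10/10=1$---so the mathematical content matches; note only that your invocation of Proposition~\ref{prop61} is unnecessary for this step.
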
 

\begin{proof}
Since the analytic  representation is irreducible and defined over $\Q$ (Appendix~\ref{sea2}), the proposition follows from~Proposition~\ref{propb1}.
 \end{proof}

Unfortunately, we were not able to say more about the elliptic curve $E$ in the proposition: as explained in Remark~\ref{remb4}, the mere existence of a $\G$-action on $E^{10}$ with prescribed analytic representation and of a $\G$-invariant polarization does not put any restriction on  $E$.\

 We suspect that this curve $E$ is isomorphic to the elliptic curve $E_\lambda:=\C/\Z[\lambda]$, which has complex multiplication  by~$\Z[\lambda]$, where
$ \lambda:=\tfrac12(-1+\sqrt{-11})$.\
More precisely, we conjecture that~$(\J,\theta)$ is isomorphic to the \ppav\ constructed in  Proposition~\ref{prop63}.


\appendix\section{Automorphisms of double EPW sextics}\label{appC}

  \subsection{Double EPW sextics and their automorphisms}\label{se41}
  
As in Section~\ref{se1}, let $V_6$ be a $6$-dimensional complex vector space and let $A\subset \bw3V_6$ be a  Lagrangian subspace  with no decomposable vectors, with associated EPW sextic $Y_A\subset \P(V_6)$.\ There is a canonical double covering
\begin{equation}\label{piA}
\pi_A\colon \tY_A\lra Y_A
\end{equation}
  branched along the integral surface $Y^{\ge 2}_A$.\ The fourfold $\tY_A$ is called a {\em double EPW sextic} and its singular locus    is the finite set~$\pi_A^{-1}(Y^{\ge 3}_A)$ (\cite[Section~1.2]{og7} or  \cite[Theorem~B.7]{dkclass}).\ It carries the canonical polarization $H:=\pi_A^*\cO_{Y_A}(1)$ and the image of the associated morphism $\tY_A\to \P(H^0(\tY_A,H)^\vee)$  is isomorphic to $Y_A$.\   
 When   $Y_A^{\ge3}=\vide$, we   say that $Y_A$ is {\em quasi-smooth} and~$\tY_A$ is  a  smooth \hk\ variety of K3$^{[2]}$-type.

Every automorphism  of $Y_A$  induces an automorphism of~$\tY_A$ (see the proof of  \cite[Proposition~B.8(b)]{dkclass}) that fixes the   class~$H$.\ 
Conversely, let    $\Aut_H(\tY_A)$ be the group of automorphisms of $\tY_A$ that fix the class~$H$.\ It 
 contains the covering involution $\iota$ of~$\pi_A$.\ 
Any element of $\Aut_H(\tY_A)$  induces an automorphism of $\P(H^0(\tY_A,H)^\vee)\isom \P( V_6)$ hence descends to an 
 automorphism of~$Y_A$.\ This gives  a central extension  
\begin{equation}\label{central}
0\to \langle \iota\rangle \to \Aut_H(\tY_A) \to  \Aut(Y_A)\to 1.
\end{equation}
As we  will check in~\eqref{h2o}, the space $H^2(\tY_A, \cO_{\tY_A}) $ has dimension 1.\ It is acted on by the group of
automorphisms of $\tY_A$ and this defines 
another extension
\begin{equation}\label{defm}
1\to \Aut_H^s(\tY_A) \to \Aut_H(\tY_A) \to\mmu_r\to 1.
\end{equation}
The image of ~$\iota$ in $\mmu_r$ is $-1$ and $\Aut_H^s(\tY_A)$ is the subgroup of  elements of $\Aut_H(\tY_A)$ that act trivially on $H^2(\tY_A, \cO_{\tY_A}) $ (when $Y_A^{\ge3}=\vide$, these  are exactly, by Hodge theory, the symplectic   automorphisms---those that leave any symplectic $2$-form on $\tY_A$ invariant).\

We will show in the next proposition (which was kindly provided by A. Kuznetsov) that these extensions  are both trivial.\ 
For that, we construct an extension
\begin{equation}\label{exttilde}
1 \to \mmu_2 \to \widetilde\Aut(Y_A) \to \Aut(Y_A) \to 1
\end{equation}
as follows.\ Recall from~\eqref{autya} that there is an embedding $\Aut(Y_A) \hra \PGL(V_6)$.\ Let $G$ be the inverse image of $\Aut(Y_A)$ via the canonical map $\SL(V_6)\to \PGL(V_6)$.\ It is an extension of~$\Aut(Y_A)$ by~$\mmu_6$ and we set $\widetilde\Aut(Y_A):=G/\mmu_3$.\ 

The action of $G$ on $V_6$ induces
an action on $\bw3V_6$ such that $\mmu_6$ acts through its cube,
hence the latter action   factors through an action of
$
\widetilde\Aut(Y_A) $.\ 
The subspace $A \subset \bw3V_6$ is preserved by this action,
hence we have a morphism of central extensions
\begin{equation}\label{esss}
\begin{aligned}
\xymatrix
@R=5mm@M=2mm
{
1\ar[r]&\mmu_2\ar[r]\ar@{_(->}[d]&\widetilde\Aut(Y_A)\ar[r]\ar[d]&\Aut(Y_A)\ar[r]\ar[d]&1\\
1\ar[r]&\C^\times\ar[r]
&\GL(A)\ar[r]&\PGL(A)\ar[r]&1.
} 
\end{aligned}
\end{equation}
%

\begin{lemm}\label{nlem}
The vertical morphisms in~\eqref{esss} are injective.
\end{lemm}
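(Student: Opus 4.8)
The plan is to show that each vertical arrow in~\eqref{esss} is injective, working from right to left and exploiting the simplicity of $\G$ together with the explicit structure of the extensions.

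First I would treat the rightmost vertical map $\Aut(Y_A) \to \PGL(A)$. The key point is that $\Aut(Y_A) \hra \PGL(V_6)$ acts on $\bw3 V_6$ through the third exterior power, and hence on the Lagrangian subspace $A$, giving the induced map to $\PGL(A)$. An element of $\Aut(Y_A)$ in the kernel is represented by some $g \in \PGL(V_6)$ whose action on $A$ is scalar. Since $A$ determines $Y_A$ (indeed $A$ is recovered from the geometry of $Y_A$, and by~\eqref{autya} the automorphism is pinned down by its action on $\bw3 V_6$ preserving $A$), acting as a scalar on $A$ forces $g$ to act compatibly, and I expect that a scalar action of $g$ on all of $\bw3 V_6$ forces $g$ to be scalar on $V_6$, hence trivial in $\PGL(V_6)$. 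Concretely, if $\bw3 g$ is multiplication by $\lambda$ on $A$ and $A$ is not contained in any proper $(\bw3 g)$-eigenspace pattern compatible with a nonscalar $g$, then $g$ itself must be scalar; this is where I would invoke that $A$ spans enough of $\bw3 V_6$ (it has half-dimension $10$ and meets the two summands in~\eqref{deco} as a graph, so it is ``spread out'' across eigenspaces).

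Next I would handle the middle vertical map $\widetilde\Aut(Y_A) \to \GL(A)$. By the commutativity of~\eqref{esss} and the five-lemma–type diagram chase, injectivity of the middle map follows once injectivity of the outer two maps is established, together with injectivity on the subgroup $\mmu_2$. The left vertical map $\mmu_2 \hra \C^\times$ is the obvious inclusion and is visibly injective. So the genuine content is the rightmost map plus a diagram chase: given $\tilde g$ in the kernel of the middle map, its image in $\Aut(Y_A)$ lies in the kernel of the rightmost map, hence is trivial; thus $\tilde g \in \mmu_2$, and since $\mmu_2 \to \C^\times$ is injective and $\tilde g$ maps to $1 \in \GL(A)$, we get $\tilde g = 1$.

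The main obstacle will be the rightmost injectivity, that is, ruling out a nonscalar $g \in \GL(V_6)$ acting as a scalar on $A \subset \bw3 V_6$. The clean way is to observe that if $\bw3 g|_A$ is a scalar $\lambda$, then since $A$ is a graph of the $\G$-equivariant isomorphism $v$ over $\bw2 V_\xi$ and the summands in~\eqref{deco} are the only invariant pieces, the scalar must propagate; more robustly, one can argue that the representation of $\G$ on $A \cong \bw2 V_\xi$ is faithful (as $\G$ is simple and the representation is nontrivial), so the composite $\widetilde\Aut(Y_A) \to \GL(A)$ restricted to the copy of $\G$ is already injective, and the scalar ambiguity is exactly the $\mmu_2$ accounted for in the top row. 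I would therefore close the argument by noting that the kernel of the rightmost map, being a normal subgroup of $\Aut(Y_A) \isom \G$ on which the induced projective action is trivial, is either all of $\G$ or trivial by simplicity; since the action on $A$ is nontrivial it cannot be all of $\G$, so the kernel is trivial.
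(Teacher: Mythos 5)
Your reduction of the middle vertical map to the rightmost one (via the diagram chase and the injectivity of $\mmu_2\hra\C^\times$) is a valid piece of bookkeeping, but your argument for the rightmost map --- which is the actual content of the lemma --- has a genuine gap, in two respects. First, the lemma is stated for an \emph{arbitrary} Lagrangian $A\subset\bw3V_6$ with no decomposable vectors, whereas your argument invokes structure particular to the Klein Lagrangian $\A$ (the graph description over $\bw2V_\xi$ and the decomposition~\eqref{deco}) and, worse, the identification $\Aut(Y_A)\isom\G$ together with simplicity. That identification is Proposition~\ref{prop:all_autom_A}, proved in Section~4 using Proposition~\ref{split1}, i.e., downstream of this appendix, so your closing argument is circular even in the case $A=\A$ (at this point one only knows $\G\hra\Aut(Y_\A)$, and injectivity on that copy of $\G$ says nothing about the rest of $\Aut(Y_\A)$); for general $A$, the group $\Aut(Y_A)$ is just some finite subgroup of $\PGL(V_6)$ with no simplicity to exploit. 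Moreover, an element of the kernel need not lie in any copy of $\G$ nor respect any $\G$-stable decomposition, so the phrase ``the scalar must propagate'' is exactly the unproven step.

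Second, note that your proposal never uses the hypothesis that $A$ contains no decomposable vectors, which is the key geometric input. The paper's mechanism for the step you flag as the ``main obstacle'' is the following: lift to $g\in G\subset\SL(V_6)$ and suppose $(\bw3g)|_A=\lambda\Id_A$; since $A$ is Lagrangian for the wedge-product symplectic form, which $\bw3g$ preserves, the eigenvalues of $\bw3g$ are $\lambda$ on $A$ and $\lambda^{-1}$ on $\bw3V_6/A\isom A^\vee$; hence every triple product $\lambda_i\lambda_j\lambda_k$ of eigenvalues of $g$ on $V_6$ equals $\lambda$ or $\lambda^{-1}$, which by an elementary combinatorial argument forces $g$ to have at most two distinct eigenvalues on $V_6$, one eigenspace $E$ being of dimension at least $3$; if $\lambda\ne\lambda^{-1}$, then $\bw3E$, which consists of decomposable vectors, is contained in $A$ or in $A^\vee$ --- a contradiction. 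So $\lambda=\pm1$, and the remaining case of trivial action on $A$ is settled by the $G$-equivariant splitting of the exact sequence $0\to A\to\bw3V_6\to A^\vee\to0$ (using finiteness of $G$) together with the injectivity of $\PGL(V_6)\to\PGL(\bw3V_6)$. None of this eigenvalue analysis, nor any substitute for it, appears in your proposal, so the injectivity of the rightmost (and hence, through your chase, the middle) map remains unproved.
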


\begin{proof}
Let $g\in  G\subset \SL(V_6)$.\ Assume that $g$ acts trivially on $A$.\ Then it also acts trivially on~$A^\vee$.\  There is a $G$-equivariant exact sequence $0 \to A \to \bw3V_6 \to A^\vee \to 0$ which splits $G$-equivariantly because~$G$ is finite.\ It follows that $G$ also acts trivially on $\bw3V_6$.\ 
The natural morphism
$\PGL(V_6) \to \PGL(\bw3V_6)$ being injective,   $g$ is in $\mmu_6$.\ Finally, $\mmu_6/\mmu_3 $
acts nontrivially on $A$, hence $g$ is in $\mmu_3$ and its image in $  \widetilde\Aut(Y_A)$ is~$1$.\ This proves that the middle vertical map in~\eqref{esss} is injective.

Assume now that $g $ acts as $\lambda \Id_A$ on $A$.\ Its eigenvalues on $\bw3V_6$ are then~$\lambda$ and $\lambda^{-1}$, both with multiplicity $10$.\ Let $\lambda_1,\dots,\lambda_6$ be its eigenvalues on $V_6$.\ For   all $1\le i<j<k\le 6$, one then has $\lambda_i\lambda_j\lambda_k= \lambda$  or $\lambda^{-1}$.\ It follows that   if $i,j,k,l,m$ are all distinct, $\lambda_i\lambda_j\lambda_k,\lambda_i\lambda_j\lambda_l,\lambda_i\lambda_j\lambda_m$ can only take 2 values, hence $\lambda_k,\lambda_l,\lambda_m$  can only take 2 values.\ So,  there are at most $2$ distinct eigenvalues and one of  the eigenspaces, say $E_{\lambda_1}$, has dimension at least~$ 3$.\ If $\lambda\ne \lambda^{-1}$, the eigenspace in $\bw3V_6$ for the eigenvalue $\lambda_1^3$, which is either $A$ or $A^\vee$, contains $\bw3E_{\lambda_1}$.\ This contradicts the fact that  $A$ and $A^\vee$ contain no decomposable vectors.\ Therefore, $\lambda= \lambda^{-1}$ and~$g$ acts as $\pm \Id_A$, and the first part of the proof implies that the image of $\pm g$  in $  \widetilde\Aut(Y_A)$ is~$ 1$.\ This proves that the rightmost vertical map in~\eqref{esss} is injective.
\end{proof}

\begin{prop}[Kuznetsov]\label{split1}
Let $A\subset \bw3V_6$ be a  Lagrangian subspace  with no decomposable vectors.\ The extensions~\eqref{central} and~\eqref{defm} are trivial and $r=2$; more precisely, there is an isomorphism
$$\Aut_H(\tY_A)\isom  \Aut(Y_A)\times \langle \iota\rangle$$
that splits~\eqref{central} and  the factor   $\Aut(Y_A)$ corresponds to the subgroup  $\Aut_H^s(\tY_A)$ of $\Aut_H(\tY_A)$.
%
\end{prop}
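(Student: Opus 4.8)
The plan is to split \eqref{central} by letting the ambient linear group act \emph{directly} on $\tY_A$ through O'Grady's construction, and then to read off the symplectic character from the fact that $\SL(V_6)$ preserves the symplectic form on $\bw3{V_6}$. Recall from \eqref{autya} that $\Aut(Y_A)=\{g\in\PGL(V_6)\mid(\bw3{g})(A)=A\}$; let $G\subset\SL(V_6)$ be its preimage, so that $\Aut(Y_A)=G/\mmu_6$ while $\widetilde\Aut(Y_A)=G/\mmu_3$ as in \eqref{exttilde}. Since the double covering $\pi_A\colon\tY_A\to Y_A$ together with its polarization $H=\pi_A^*\cO_{Y_A}(1)$ is functorial in the pair $(V_6,A)$, each $g\in G$ (an automorphism of $V_6$ preserving $A\subset\bw3{V_6}$) induces an automorphism of $\tY_A$ fixing $H$; this gives a homomorphism $\rho\colon G\to\Aut_H(\tY_A)$.

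First I would show that $\rho$ descends to a section of \eqref{central}. The scalars $\C^\times\cdot\Id\subset\GL(V_6)$ preserve $A$ and hence act on $\tY_A$; as they act trivially on $\P(V_6)$ they act through the deck group $\langle\iota\rangle\isom\Z/2\Z$ of $\pi_A$, and this action is trivial because $\C^\times$ is connected. In particular $\mmu_6=\ker(\SL(V_6)\to\PGL(V_6))$ acts trivially on $\tY_A$, so $\rho$ factors through a homomorphism $s\colon\Aut(Y_A)\to\Aut_H(\tY_A)$. By construction $s$ is a section of the surjection $\Aut_H(\tY_A)\to\Aut(Y_A)$ in \eqref{central}; since $\langle\iota\rangle$ is central, this already yields $\Aut_H(\tY_A)=s(\Aut(Y_A))\times\langle\iota\rangle$.

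It then remains to identify $s(\Aut(Y_A))$ with $\Aut_H^s(\tY_A)$, which simultaneously forces $r=2$. The wedge product gives $\bw3{V_6}$ a $\bw6{V_6}$-valued symplectic form $\omega$, and $G\subset\SL(V_6)$ acts trivially on $\bw6{V_6}$, hence preserves $\omega$ as a scalar-valued form. As the holomorphic two-form $\sigma$ on $\tY_A$ is built functorially from $(A,\omega)$, every element of $G$ acts trivially on $H^0(\tY_A,\Omega^2_{\tY_A})=\C\sigma$, so $s(\Aut(Y_A))\subseteq\Aut_H^s(\tY_A)$. On the other hand $\iota$ maps to $-1$ in $\mmu_r$ (recorded after \eqref{defm}), so $\langle\iota\rangle\cap\Aut_H^s(\tY_A)=\{1\}$ and $\Aut_H^s(\tY_A)$ injects into $\Aut_H(\tY_A)/\langle\iota\rangle=\Aut(Y_A)$; since $s$ is a section, this injection is onto $s(\Aut(Y_A))$, whence $\Aut_H^s(\tY_A)=s(\Aut(Y_A))$. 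Therefore $\Aut_H(\tY_A)=\Aut_H^s(\tY_A)\times\langle\iota\rangle\isom\Aut(Y_A)\times\langle\iota\rangle$ and $\mmu_r=\Aut_H(\tY_A)/\Aut_H^s(\tY_A)\isom\langle\iota\rangle$, that is $r=2$.

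The hard part is the functoriality input invoked twice above: that O'Grady's double cover and its symplectic form depend naturally on $(V_6,A)$, with $\sigma$ transforming through the $\bw6{V_6}$-scaling by which a linear map rescales $\omega$; this is exactly what converts the algebraic fact $G\subset\SL(V_6)$ into the geometric statement that $s$ is symplectic. Here the diagram \eqref{esss} and Lemma~\ref{nlem} play a supporting role: they pin down the induced representation on $A\subset\bw3{V_6}$, with the central $\mmu_2\subset\widetilde\Aut(Y_A)$ acting as $-\Id_A$, and guarantee its faithfulness. This is precisely why the action on $A$ does \emph{not} factor through $\Aut(Y_A)$, whereas by the connectedness argument the action on $\tY_A$ does. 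I would double-check exactly this dichotomy — that all scalars die on $\tY_A$ while the residual $\mmu_2$ survives on $A$ — since it is what makes $s$ a genuine section rather than merely a homomorphism out of the double cover $\widetilde\Aut(Y_A)$.
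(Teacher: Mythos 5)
Your skeleton is the same as the paper's (descend a linear action through O'Grady's construction to split~\eqref{central}, then detect the symplectic subgroup via the action on a one\-dimensional cohomology space, and finish by the group-theoretic argument you give, which is correct). But both steps you defer to ``functoriality'' are exactly where the content lies, and the substitute you offer for the first one fails. O'Grady's construction $\tY_A=\Spec(\cO_{Y_A}\oplus\cR_1(-3))$ is equivariant only for linear maps preserving the symplectic form on $\bw3V_6$ --- i.e.\ for your $G\subset\SL(V_6)$ (indeed the paper only ever constructs an action of $\widetilde\Aut(Y_A)=G/\mmu_3$). A scalar $\lambda\Id_{V_6}$ with $\lambda^6\ne1$ rescales the form, hence the algebra structure on $\cO_{Y_A}\oplus\cR_1(-3)$, by $\det(\lambda\Id)=\lambda^6$; it therefore acts on $\tY_A$ only after an auxiliary choice of square root of $\lambda^6$, that is, only up to the deck involution $\iota$ --- precisely the ambiguity you are trying to resolve. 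So the homomorphism $\C^\times\to\Aut(\tY_A)$ that your connectedness (or divisibility) argument requires does not exist canonically, and you cannot conclude that $\mmu_6$ acts trivially this way. The paper's replacement is a short explicit computation: $\mmu_3$ acts trivially on both $\bw3V_6$ and $\cO_{\P(V_6)}(-3)$ (both actions are through cubes), while $-\Id_{V_6}$ acts by $-1$ on $A$, hence on $\cR_1$, and by $-1$ on $\cO_{\P(V_6)}(-3)$, and these two signs cancel on $\cR_1(-3)$; thus $\mmu_6$ acts trivially on $\tY_A$ and the action factors through $\Aut(Y_A)$, giving the section. Your closing ``dichotomy'' is indeed the crux: the residual $\mmu_2$ acts by $-\Id_A$ on $A$ yet trivially on $\tY_A$, and this is invisible to any argument that does not track the twist by $\cO(-3)$.

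The second deferred step --- that the holomorphic $2$-form is ``built functorially from $(A,\omega)$,'' so that $G$ acts trivially on $H^0(\tY_A,\Omega^2_{\tY_A})$ --- is plausible but is again the genuinely nontrivial input, and you do not prove it. The paper establishes the equivalent statement on the conjugate line $H^2(\tY_A,\cO_{\tY_A})$: by~\eqref{h2o} this space is identified, equivariantly, with $H^3(\P(V_6),\bw2T_{\P(V_6)}(-6))$, on which the action extends to the algebraic group $\PGL(V_6)$; since $\PGL(V_6)$ has no nontrivial characters, the lifted copy of $\Aut(Y_A)$ acts trivially, while $\iota$ acts by $-1$. With these two equivariance statements supplied, your endgame (centrality of $\langle\iota\rangle$, $\Aut_H^s(\tY_A)\cap\langle\iota\rangle=\{1\}$, hence $\Aut_H^s(\tY_A)=s(\Aut(Y_A))$ and $r=2$) goes through verbatim and agrees with the paper.
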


\begin{proof}
We briefly recall from \cite[Section~1.2]{og7} (see also \cite{dkcovers}) the construction of the double cover $\pi_A\colon \tY_A\to Y_A$.\ In the terminology of the latter article, one considers the Lagrangian subbundles $\cA_1:=A \otimes \cO_{\P(V_6)}$ and $\cA_2:=\bw2T_{\P(V_6)}(-3)$ of the trivial vector bundle $\bw3V_6 \otimes \cO_{\P(V_6)}$, and the first Lagrangian cointersection sheaf
$
\cR_1 := \coker(\cA_2\hra \cA_1^\vee)
$, a rank-$1$  sheaf with support~$Y_A$.\
One sets	 (\cite[Theorem~5.2(1)]{dkcovers})
$$
\tY_A = \Spec(\cO_{Y_A} \oplus \cR_1(-3)).
$$
In particular, one has
\begin{equation}\label{h2o}
H^2(\tY_A, \cO_{\tY_A}) \isom H^2(Y_A, \cR_1(-3)) \isom H^3(\P(V_6), \cA_2(-3))= H^3(\P(V_6), \bw2T_{\P(V_6)}(-6))\isom \C.
\end{equation}

The  subbundles $\cA_1$ and $\cA_2$ are invariant for  the action of $\widetilde\Aut(Y_A)$ on~$\bw3V_6$, hence 
the sheaf~$\cR_1 $ is $\widetilde\Aut(Y_A)$-equivariant.\ Finally,   the line bundle $\cO_{\P(V_6)}(-1) $ 
has a $G$-linearization (the subgroup $G\subset \SL(V_6)$ was defined right before Lemma~\ref{nlem}).\ It follows that $\cO_{\P(V_6)}(-3)$ has an $\widetilde\Aut(Y_A)$-linearization, hence
the same is true for the sheaf $\cR_1(-3)$.\ Therefore, the group $\widetilde\Aut(Y_A)$ acts on $\tY_A$ and fixes the polarization $H$.

Observe now that since the nontrivial element of $ \mmu_2 \subset \widetilde\Aut(Y_A)$ acts by $-1$ on $A$,  hence also on $ \cR_1$, and since it acts by $-1$ on $\cO(-1)$,  hence also
on $\cO(-3)$, it follows that $ \mmu_2$ acts trivially on $\cR_1(-3)$, hence also on $\tY_A$.\ Therefore, the morphism $\widetilde\Aut(Y_A)\to  \Aut_H(\tY_A)$ factors through the quotient
 $\widetilde\Aut(Y_A)/\mmu_2 = \Aut(Y_A)$.\ In other words, the surjection $\Aut_H(\tY_A) \to  \Aut(Y_A)$ in~\eqref{central} has a section and this central extension is trivial.

 
The action of  the group $\Aut(\tY_A)$  on the 1-dimensional vector space 
$H^2(\tY_A, \cO_{\tY_A})$ defines a   morphism $\Aut(\tY_A)\to\C^\star$ that maps  $\iota$  to $-1$.\ 
The   lift $\widetilde\Aut(Y_A)\to \Aut(Y_A) \hra \Aut_H(\tY_A)$ acts trivially on $H^2(\tY_A, \cO_{\tY_A})$ because 
  its action  is induced by the action of $\PGL(V_6)$, which has no nontrivial characters.\ This gives a surjection $\Aut_H(\tY_A) \to  \langle \iota\rangle$ which is trivial on the image of the section $\Aut(Y_A) \hra \Aut_H(\tY_A)$.\  This implies   that the extension~\eqref{defm} is also trivial and $r=2$.\ The theorem is therefore proved.
\end{proof}

\subsection{Moduli space and  period map of (double) EPW sextics} \label{sec42}
Quasi-smooth   EPW sextics admit an affine coarse moduli space ${\mathbf{M}^{\mathrm{EPW},0}}$, constructed in \cite{og5}  as a GIT quotient  by $\PGL(V_6)$ of an affine open dense subset of the space  of Lagrangian subspaces in~$\bw3V_6$.\ 

Let $\tY$ be a hyperk\"ahler fourfold of K3$^{[2]}$-type (such as a double EPW sextic).\  The lattice $H^2(\tY,\Z)$ (endowed with the Beauville--Bogomolov quadratic form $q_{BB}$) is isomorphic to the lattice
\begin{equation}\label{defL}
L:=U^{\oplus 3}\oplus E_8(-1)^{\oplus 2}\oplus (-2), 
\end{equation} 
where $U$ is the hyperbolic plane $\bigl( \Z^2,  \bigl(\begin{smallmatrix}  0& 1\\ 1 & 0 \end{smallmatrix}\bigr)\bigr)$, $ E_8(-1)$ is the negative definite even rank-8 lattice, and $(m)$ is the rank-$1$ lattice with generator of square $m$.\

Fix a class $h\in L$ with $h^2=2$.\ These classes are all in the same $O(L)$-orbit and
\begin{equation}\label{defhperp}
h^\bot \isom U^{\oplus 2}\oplus E_8(-1)^{\oplus 2}\oplus (-2)^{\oplus 2}.
\end{equation} 
The space 
$$
\begin{aligned}
\Omega_{h} :={}& \{ [x]\in \P(L \otimes \C)\mid  x\cdot h=0,\ x\cdot x=0,\ x\cdot \bar x>0\}\\
{}={}& \{ [x]\in \P(h^\bot \otimes \C)\mid  x\cdot x=0,\ x\cdot \bar x>0\}
\end{aligned}
$$
 has two connected components, interchanged by complex conjugation, which are Hermitian symmetric domains.\ 
 It is acted on by the   group 
 $$  \{g\in  O(L)\mid g(h)=h\},$$
also with  two connected components, which is also the index-2 subgroup $\widetilde O(h^\bot)$ of  $O(h^\bot)$  that consists of isometries that act trivially on the discriminant group $\Disc(h^\bot)\isom (\Z/2\Z)^2$.\
The quotient is an irreducible quasi-projective variety (Baily--Borel) and the {\em period map}
 \begin{equation}\label{defp}
 \wp\colon {\mathbf{M}^{\mathrm{EPW},0}} \lra  \widetilde O(h^\bot)\backslash \Omega_{h},\quad [\tY]\longmapsto [H^{2,0}(\tY)]
 \end{equation}
is algebraic (Griffiths).\ It is  an open embedding by
 Verbitsky's Torelli theorem   (\cite{ver, marsur, huybki}).\
 
 If $A\subset \bw3V_6$ is a Lagrangian such that $\tY_A$ is  smooth   with period $[x]\in\P(L \otimes \C)$ (well defined only up to the action of $ \widetilde O(h^\bot)$), the Picard group $\Pic(\tY_A)$ is, by Hodge theory, isomorphic to $x^\bot\cap L$.\ It contains the class $h$ (of square $2$) but, as explained in \cite[Theorem~5.1]{dm}, no class orthogonal to $h$ of square $-2$.

\subsection{Automorphisms of prime order }\label{secc1} 

Let $\tY$ be a hyperk\"ahler fourfold of K3$^{[2]}$-type.\  In the lattice $(H^2(\tY,\Z),q_{BB})$ mentioned in Appendix~\ref{sec42}, we consider
 the {\em transcendental lattice} 
$$\Tr(\tY) :=\Pic(\tY)^\bot\subset H^2(\tY,\Z) .$$ 
 
 The automorphism  group $\Aut(\tY)$ acts faithfully by isometries on the lattice $(H^2(\tY,\Z),q_{BB})$ and preserves the sbulattices $\Pic(\tY)$ and $\Tr(\tY)$.\
If $G$ is a subset of~$\Aut(\tY)$, we denote by
$T_G(\tY)$
the invariant lattice (of  elements of~$H^2(\tY,\Z) $ that are invariant by all elements of $G$) and by $S_G(\tY):=T_G(\tY)^\bot$ its orthogonal in $H^2(\tY,\Z) $.\

Many results are known about automorphisms of prime order $p$ of \hk\ fourfolds.\ We  restrict ourselves to the case $p\ge 11$.\ In the statement below,  the  rank-$20$ lattice $\SN$    was defined in \cite[Example~2.9]{mon3} by an explicit $20\times 20$ Gram matrix (see also \cite[Example~2.5.9]{monphd}); it is negative definite, even,  contains no $(-2)$-classes,  its discriminant group is $(\Z/11\Z)^2$, and its discriminant form is $\left(\begin{smallmatrix} -2/11 & 0\\ 0 & -2/11 \end{smallmatrix}\right)$.\

\begin{theo}\label{thc1}
Let $\tY$ be a projective hyperk\"ahler fourfold of K3$^{[2]}$-type and let $g$ be a symplectic   automorphism of~$\tY$ of prime order $p\ge11$.\  There are inclusions $\Tr(\tY)\subset T_g(\tY)$ and $S_g(\tY)\subset \Pic(\tY)$, and \mbox{$p= 11$.}\ The  lattice $S_g(\tY)$ is 
 isomorphic to $\SN$ and   $\rho(\tY)=21$.\ The possible lattices~$T_g(\tY)$ are
$$ \begin{pmatrix}
2 &1 &0  \\
1  &6&0\\
0&0&22
\end{pmatrix}\quad or \quad
 \begin{pmatrix}
6&2 &2  \\
2  &8&-3\\
2&-3&8
\end{pmatrix}.$$
\end{theo}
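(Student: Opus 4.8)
The plan is to analyze the action of the symplectic automorphism $g$ of prime order $p\ge 11$ by exploiting the standard theory of the invariant and coinvariant lattices. First I would recall the general lattice-theoretic constraints on symplectic automorphisms of prime order on \hk\ fourfolds of K3$^{[2]}$-type. The invariant lattice $T_g(\tY)$ always contains the transcendental lattice $\Tr(\tY)$ (since a symplectic automorphism acts trivially on $H^{2,0}$, hence on the whole transcendental part), so $S_g(\tY)=T_g(\tY)^\bot$ is contained in $\Pic(\tY)$. The coinvariant lattice $S_g(\tY)$ carries a fixed-point-free isometry of order $p$, so $\Z[\zeta_p]$ acts on it, forcing $(p-1)\mid \rank(S_g(\tY))$. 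Since $\rank H^2=23$ and $T_g$ must contain the signature-$(3,0)$ positive part together with a rank-$1$ piece from $h$, one gets $\rank S_g(\tY)\le 20$, so $p-1\le 20$, whence $p\in\{11,13,17,19\}$. Eliminating $13,17,19$ is the crux: their existence would force $S_g$ to have rank divisible by $12,16,18$ respectively, and one checks (as in the references on symplectic automorphisms of K3$^{[2]}$-type) that no such negative-definite lattice embeds compatibly into $L$ with the required discriminant-form conditions, leaving only $p=11$ with $\rank S_g=20$.

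Next I would pin down $S_g(\tY)$ itself. With $p=11$ and $\rank S_g=20$, the module $S_g$ is a $\Z[\zeta_{11}]$-lattice of rank $2$ over $\Z[\zeta_{11}]$ (since $[\Q(\zeta_{11}):\Q]=10$). Using the classification of such $\Z[\zeta_{11}]$-structures — equivalently, the Nikulin-type constraints that $S_g$ is negative definite, even, contains no $(-2)$-classes, and has discriminant group $(\Z/11\Z)^2$ with discriminant form $\left(\begin{smallmatrix}-2/11&0\\0&-2/11\end{smallmatrix}\right)$ — I would identify $S_g(\tY)$ with the lattice $\SN$ defined in Section~\ref{secc1}. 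This uniqueness is the kind of statement already recorded in the literature on prime-order symplectic automorphisms (e.g.\ the works of Mongardi cited in the excerpt), so I would invoke those results rather than redo the Gram-matrix manipulations. The Picard rank then satisfies $\rho(\tY)\ge 1+20=21$, and equality holds because $T_g\supset \Tr(\tY)$ forces $\Pic(\tY)=S_g(\tY)\oplus(\text{rank-}1)$ up to finite index, giving $\rho(\tY)=21$.

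Finally, to determine the possible invariant lattices $T_g(\tY)$, I would use the standard gluing/overlattice analysis via Nikulin's theory. Since $S_g(\tY)\oplus T_g(\tY)$ embeds with finite index in $H^2(\tY,\Z)\isom L$, the discriminant forms must be anti-isometric on the glued parts: $\Disc(T_g)$ must contain a subgroup anti-isometric to a subgroup of $\Disc(S_g)=(\Z/11\Z)^2$, and the total discriminant must match that of $L$ (which is $\Z/2\Z$). Because $\SN$ has no gluing at primes other than $11$ and $2$, the $11$-part of $\Disc(T_g)$ is $(\Z/11\Z)^2$ with discriminant form $\left(\begin{smallmatrix}2/11&0\\0&2/11\end{smallmatrix}\right)$, while the $2$-part contributes the class $h^2=2$. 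A rank-$3$, signature-$(3,0)$, even lattice with this prescribed discriminant form is then enumerated by genus theory, and one finds exactly the two Gram matrices in the statement. The main obstacle will be the elimination of $p=13,17,19$ and the subsequent uniqueness/enumeration steps: these rest on delicate discriminant-form computations and on the classification results for symplectic automorphisms, so I would lean heavily on Theorem~\ref{thc1}'s cited sources and on \cite{nik} to keep the argument clean rather than recomputing each genus by hand.
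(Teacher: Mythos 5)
Your proposal is correct and takes essentially the same route as the paper, whose proof is explicitly just a compilation of the cited results (Mongardi's bound $p\le 11$ and identification $S_g(\tY)\isom\SN$, the inclusions and $\rho(\tY)=21$, and the enumeration of the possible $T_g(\tY)$ from Boissi\`ere--Nieper-Wi\ss kirchen--Sarti); your sketch of the underlying mechanisms --- triviality of $g$ on $\Tr(\tY)$, the $\Z[\zeta_p]$-module structure giving $(p-1)\mid\rank S_g(\tY)$, negative definiteness forcing $\rank S_g(\tY)\le 20$, and Nikulin gluing of $\Disc(\SN)\isom(\Z/11\Z)^2$ with a rank-$3$ positive definite $T_g(\tY)$ of determinant $2\cdot 11^2$ --- matches the standard arguments behind those references. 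One cosmetic slip: the theorem assumes no polarization $h$, so the invariant positive class should come from averaging an ample class under $g$ rather than from a fixed $h$, but this does not affect the argument.
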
 

\begin{proof} 
The proof  is a compilation of previously known results on symplectic automorphisms.\ 
The  bound  $p\le 11$ is \cite[Corollary~2.13]{mon3}.\ The inclusions and the properties of  the lattice~$S_g(\tY)$ are in      \cite[Lemma~3.5]{mon2}, the equality $\rho(\tY)=21$ is in \cite[Proposition~1.2]{mon3},   the lattice $S_g(\tY)$ is determined in \cite[Theorem~7.2.7]{monphd}, and  the possible lattices $T_g(\tY)$  in \cite[Section~5.5.2]{bns}.
\end{proof}
 
This theorem applies in particular to   (smooth) double EPW sextics~$\tY_A$.\ We are interested in automorphisms that preserve the  canonical degree-2 polarization $H$.\ By Proposition~\ref{split1}, the group of   these automorphisms, modulo the covering involution $\iota$, is isomorphic to the group of automorphisms of the EPW sextic~$Y_A$.

 \begin{coro}\label{th14}
  Let  $\tY_A $ be a smooth double EPW sextic  and let $g$ be an automorphism of $\tY_A$ of prime order $p\ge 11$ that fixes the polarization~$H$.\  Then $p=11$ and\,\footnote{In the given decomposition of the lattice $\Pic(\tY_A)$, the summand $(2)$ is {\em not} generated by the polarization $H$, because~$\SN$ contains no $(-2)$-classes.}
 $$
\begin{aligned}
 S_{g}(X) \isom \SN,\qquad T_{g}(\tY_A)&\isom  \begin{pmatrix}
2 &1   \\
1  &6
\end{pmatrix}\oplus (22),\qquad \Tr(\tY_A)\isom  (22)^{\oplus 2},
\\
\Pic(\tY_A)= \Z H \oplus \SN&\isom  (2)\oplus E_8(-1)^{\oplus 2}\oplus \begin{pmatrix}
-2 &-1   \\
-1  &   -6
\end{pmatrix}^{\oplus 2}  . 
\end{aligned}
$$
In particular,   the fourfold $\tY_A$ has maximal Picard number  $21$.\ 
\end{coro}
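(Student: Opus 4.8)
The plan is to read off the full structure from Theorem~\ref{thc1} once two extra facts are in place: that $g$ is symplectic, and that the polarization $H$ lies in the invariant lattice. First I would check that $g$ is symplectic. By Proposition~\ref{split1} there is an isomorphism $\Aut_H(\tY_A)\isom\Aut(Y_A)\times\langle\iota\rangle$ under which $\Aut(Y_A)$ is the symplectic subgroup $\Aut_H^s(\tY_A)$. Writing $g=(s,\iota^k)$ accordingly and using that $p$ is odd, the relation $g^p=1$ forces $\iota^k=1$, so $g$ is symplectic of order $p$. Theorem~\ref{thc1} then applies and gives at once $p=11$, the isometry $S_g(\tY_A)\isom\SN$, the inclusions $\Tr(\tY_A)\subset T_g(\tY_A)$ and $S_g(\tY_A)\subset\Pic(\tY_A)$, and $\rho(\tY_A)=21$; this value is maximal because $b_2(\tY_A)=23$ and the transcendental lattice always has rank at least~$2$. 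It also leaves two possibilities for the rank-$3$ invariant lattice $T_g(\tY_A)$.

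The extra ingredient invisible to Theorem~\ref{thc1} is the degree-$2$ polarization. Since $g$ fixes $H$, we have $H\in T_g(\tY_A)$ with $q_{BB}(H)=2$, so $T_g(\tY_A)$ must represent~$2$. I would rule out the second candidate of Theorem~\ref{thc1} by showing its minimum is~$6$: minimizing its quadratic form over the first variable writes it as
\[
6\bigl(a+\tfrac{b+c}{3}\bigr)^2+\tfrac{22}{3}\bigl(b^2-bc+c^2\bigr),
\]
which is $\ge\tfrac{22}{3}>7$ whenever $(b,c)\neq(0,0)$ and equals $6a^2\ge 6$ otherwise. Hence the second lattice does not represent $2$, and
\[
T_g(\tY_A)\isom\begin{pmatrix}2&1\\1&6\end{pmatrix}\oplus(22).
\]

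Finally I would extract $\Tr$ and $\Pic$. The lattice $\Tr(\tY_A)=\Pic(\tY_A)^{\perp}$ is saturated and contained both in $T_g(\tY_A)$ and in $H^{\perp}$, so it equals the saturated orthogonal complement of $\Z H$ in $T_g(\tY_A)$. In a basis $(v_1,v_2,v_3)$ realizing the Gram matrix just displayed, the only square-$2$ vectors are $\pm v_1$, so $H=\pm v_1$, and a short computation identifies $H^{\perp}\cap T_g(\tY_A)=\langle v_1-2v_2,\ v_3\rangle\isom(22)^{\oplus2}$, giving $\Tr(\tY_A)\isom(22)^{\oplus2}$. For the Picard lattice, $H$ is primitive of square~$2$ and orthogonal to $S_g(\tY_A)\isom\SN$, so $\Pic(\tY_A)$ is a finite-index overlattice of $\Z H\oplus\SN\isom(2)\oplus\SN$, corresponding by Nikulin's theory of overlattices to an isotropic subgroup of $\Disc((2))\oplus\Disc(\SN)\isom\Z/2\Z\oplus(\Z/11\Z)^2$. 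Since $-1$ is not a square modulo~$11$, the form $-\tfrac{2}{11}(x^2+y^2)$ has no nonzero isotropic vector, while the generator of the $\Z/2\Z$-summand contributes $\tfrac12$; hence the only isotropic element is $0$ and $\Pic(\tY_A)=\Z H\oplus\SN$, the displayed decomposition then following from the known splitting of $\SN$ recalled before Theorem~\ref{thc1}. The crux of the argument is the second step: it is precisely the degree-$2$ polarization that selects the correct invariant lattice among the two allowed by Theorem~\ref{thc1}, through the numerical fact that the other candidate fails to represent~$2$.
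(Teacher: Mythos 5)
Your argument tracks the paper's proof almost step for step up to the identification $\Pic(\tY_A)=\Z H\oplus\SN$, and those steps are all correct: the reduction to a symplectic $g$ via the splitting in Proposition~\ref{split1} (the paper's parenthetical that nonsymplectic automorphisms have even order is exactly your parity argument), the application of Theorem~\ref{thc1}, the exclusion of the second candidate for $T_g(\tY_A)$ because it fails to represent $2$ (your completion of the square showing its minimum is $6$ usefully makes explicit what the paper delegates to \cite[Section~7.4.4]{monphd}), the identification of $\Tr(\tY_A)\isom(22)^{\oplus2}$ as the primitive orthogonal of the unique pair $\pm H$ of square-$2$ vectors, and the overlattice argument via the absence of nontrivial isotropic elements in $\Disc(\Z H\oplus\SN)$, where your observation that $-1$ is a nonsquare modulo $11$ is precisely why the $11$-part $-\tfrac{2}{11}(x^2+y^2)$ is anisotropic.

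The final step, however, contains a genuine error: there is no ``known splitting of $\SN$'' as $E_8(-1)^{\oplus2}\oplus\left(\begin{smallmatrix}-2&-1\\-1&-6\end{smallmatrix}\right)^{\oplus2}$, and no such splitting can exist, because $\SN$ contains no $(-2)$-classes while $E_8(-1)$ (and the displayed binary form) visibly represent $-2$ --- this is exactly the point of the footnote attached to the corollary, which warns that in the displayed decomposition the summand $(2)$ is \emph{not} generated by $H$. The two rank-$20$ lattices lie in the same genus (same rank, signature, and discriminant form) but are not isomorphic, so the decomposition of $\Pic(\tY_A)$ cannot be obtained summand-by-summand from $\Z H\oplus\SN$. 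The paper closes this gap by passing to the \emph{indefinite} lattices $(2)\oplus\SN$ and $(2)\oplus E_8(-1)^{\oplus2}\oplus\left(\begin{smallmatrix}-2&-1\\-1&-6\end{smallmatrix}\right)^{\oplus2}$, which lie in the same genus and are isomorphic by Nikulin's uniqueness theorem \cite[Corollary~1.13.3]{nik}; without this (or an equivalent argument) the last displayed isomorphism of the statement remains unproved in your write-up.
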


\begin{proof}  By Proposition~\ref{split1}, the automorphism $g$ is symplectic (all nonsymplectic automorphisms have even order).\  Since $H\in T_{g}(\tY_A)$ and $q_{BB}(H)=2$, and the second lattice in Theorem~\ref{thc1} contains no classes of square $2$, there is only one possibility for~$T_{g}(\tY_A)$   (see also \cite[Section~7.4.4]{monphd}).\ There are only two (opposite) classes of square 2 in that lattice, so we find $
\Tr(\tY_A)$ as their orthogonal.\ 

We know that $\Pic(\tY _A)$ is an overlattice of $\Z  H\oplus S_{g} (\tY_A)$.\ Since the latter has  no nontrivial overlattices (its discriminant group has no nontrivial isotropic elements), they are equal.\ Finally,  the negative definite lattices $\SN$ and 
$$S:=     E_8(-1)^{\oplus 2}\oplus \begin{pmatrix}
-2 &-1   \\
-1  &   -6
\end{pmatrix}^{\oplus 2}$$
are in the same genus.\footnote{By Nikulin's celebrated result \cite[Corollary~1.9.4]{nik}, this means that they have same ranks, same signatures, and that their discriminant forms coincide.}\ They are not isomorphic (because $\SN$ does not represent $-2$) but  the indefinite lattices $(2)\oplus \SN$ and $(2)\oplus S$ are by \cite[Corollary~1.13.3]{nik}.
\end{proof}

We prove in Theorem~\ref{th47} that the  double EPW sextic $\tY_\A$ is the only smooth double   EPW sextic with an automorphism  of   order $  11$  that fixes the  polarization $H$.\

In Hassett's terminology (recalled in \cite[Section~4]{dm}), a (smooth) double EPW sextic $\tY_A$ is {\em special of discriminant $d$} if there exists a primitive rank-2 lattice $K\subset \Pic(\tY_A)$ containing the polarization $H$ such that $\disc(K^\bot)=-d$ (the orthogonal is taken in $(H^2(\tY_A,\Z),q_{BB})$);  this may only happen when $d\equiv 0,2,4\pmod{8}$ and $d>8$ (\cite[Proposition~4.1 and Remark~6.3]{dm}).\ The fourfold $\tY_A$ has an {\em associated K3 surface} if moreover the lattice $K^\bot$ is isomorphic to the opposite of the primitive cohomology lattice
of a pseudo-polarized K3 surface (necessarily of degree $d$); a necessary condition for this to happen is $d\equiv  2,4\pmod{8}$ (this was proved in \cite[Proposition~6.6]{dim} for GM fourfolds but the  computation is the same).

\begin{prop}\label{assoc}
The double EPW sextic $\tY_\A$ is special of   discriminant $d $ if and only if $d$ is a  multiple of $8$ greater than~$8$.\ In particular, it has no associated K3 surfaces.
\end{prop}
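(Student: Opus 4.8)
The plan is to work entirely inside the lattice $\Pic(\tY_\A)$, whose structure is given explicitly by Corollary~\ref{th14}: it decomposes as $(2)\oplus \SN$, and the polarization $H$ is the generator of the $(2)$-summand. By definition, $\tY_\A$ is special of discriminant $d$ precisely when there is a primitive rank-$2$ sublattice $K\subset \Pic(\tY_\A)$ containing $H$ with $\disc(K^\bot)=-d$, the orthogonal being taken in $\Pic(\tY_\A)$ (since any such $K$ lies in $\Pic(\tY_\A)$ and the transcendental part contributes nothing to the orthogonal of a Picard sublattice). So the computation reduces to enumerating the possible discriminants $-d$ of orthogonal complements $K^\bot$ as $K$ ranges over the primitive rank-$2$ sublattices of $\Pic(\tY_\A)$ containing~$H$.

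First I would reduce to a single-variable problem. Such a lattice $K$ is determined by a second primitive generator, which I may take of the form $H + s$ with $s\in\SN$ (after subtracting the appropriate integer multiple of $H$); primitivity in $\Pic(\tY_\A)$ forces $s$ to be a primitive vector of $\SN$, up to the subtlety that $K$ must be saturated. I would then compute $\disc(K)=\det\left(\begin{smallmatrix} 2 & s^2 \\ s^2 & 2+s^2\end{smallmatrix}\right)$ modulo the square of the saturation index, and relate $\disc(K^\bot)$ to $\disc(K)$ and $\disc(\Pic(\tY_\A))$ via the standard formula $\disc(K)\cdot\disc(K^\bot)=\disc(\Pic(\tY_\A))\cdot[\,\Pic(\tY_\A):K\oplus K^\bot\,]^2$. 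Since $\disc(\Pic(\tY_\A))=2\cdot\disc(\SN)=2\cdot 11^2$ and $\SN$ is even with discriminant group $(\Z/11\Z)^2$, every $s^2$ is an even integer and the index of gluing is controlled by the $11$-part of the discriminant form. Tracking these two contributions is what produces the congruence $d\equiv 0\pmod 8$.

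The key mechanism behind the congruence is the interplay of the prime $2$ and the prime $11$. On the one hand, the $(2)\oplus\SN$ decomposition means the $11$-adic discriminant form of $\Pic(\tY_\A)$ is $\left(\begin{smallmatrix} -2/11 & 0\\ 0 & -2/11\end{smallmatrix}\right)$, and any rank-$2$ sublattice containing $H=(2)$ either captures both copies of $\Z/11\Z$ in its orthogonal or none, which forces the $11$-part of $d$ to be $0$ or $2$ mod higher powers; I would argue that in fact the $11$-torsion must pass entirely into $K^\bot$ (since $K$ is a rank-$2$ lattice built from the hyperbolic-looking even form together with $H$, its own discriminant is coprime to~$11$), so that $11^2\mid d$ is impossible and the discriminant is governed purely by the $2$-adic contribution. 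On the other hand, evenness of $\SN$ gives $s^2\in 2\Z$, and a short case analysis on the possible saturation indices (which can only be $1$) shows $\disc(K)=4+2s^2-(s^2)^2$ is always $\equiv 0\pmod 4$ but with a fixed residue mod $8$ once one accounts for the sign of the form; combined with $d>8$ from \cite[Proposition~4.1 and Remark~6.3]{dm} this pins down exactly the multiples of~$8$ that exceed~$8$. The final clause about associated K3 surfaces is then immediate, since a necessary condition recalled just before the statement is $d\equiv 2,4\pmod 8$, which no multiple of~$8$ satisfies.

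The main obstacle will be the careful bookkeeping of the gluing between $K$ and $K^\bot$ inside the overlattice $\Pic(\tY_\A)$: I must verify that $K\oplus K^\bot$ actually equals $\Pic(\tY_\A)$ (equivalently, that the gluing index is $1$), or otherwise correct the discriminant formula by the square of that index. This hinges on the fact, recorded in Corollary~\ref{th14}, that $\Pic(\tY_\A)$ has \emph{no} nontrivial overlattices because its discriminant group $(\Z/2\Z)\oplus(\Z/11\Z)^2$ carries no nonzero isotropic elements for the discriminant form; the same argument applied to the pair $(K,K^\bot)$ shows the gluing is trivial, so the bare product formula applies. Once this is secured, the residue computation mod~$8$ is routine and the proposition follows.
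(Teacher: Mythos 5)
There is a genuine gap, and it begins at your very first step: you take the orthogonal complement $K^\bot$ inside $\Pic(\tY_\A)$, asserting that ``the transcendental part contributes nothing to the orthogonal of a Picard sublattice.'' Hassett's definition, as recalled in the paper immediately before the statement, takes $K^\bot$ inside the full lattice $(H^2(\tY_\A,\Z),q_{BB})$; since $K\subset\Pic(\tY_\A)$, this rank-$21$ complement \emph{contains} all of $\Tr(\tY_\A)\isom(22)^{\oplus2}$, together with nontrivial glue (inside $H^\bot$ the sum $\Tr(\tY_\A)\oplus\SN$ has index $11^2$, as exploited in the proof of Proposition~\ref{prop:all_autom_A}). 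So $\disc(K^\bot)$ is not computed by your rank-$19$ lattice, and the enumeration you set up targets the wrong invariant. The paper's route instead writes $K=\langle H,\kappa\rangle$ with $\kappa\in\SN$ primitive and pins down the divisibility: $\div_{\SN}(\kappa)$ divides $11$ while $\div_{H^\bot}(\kappa)$ divides $2$ and also divides $\div_{\SN}(\kappa)$, forcing $\div_{H^\bot}(\kappa)=1$; the formula \cite[(4)]{dm} then gives $\disc(K^\bot)=4\kappa^2$, whence $d=-4\kappa^2\equiv 0\pmod 8$ because $\SN$ is even. Your substitutes for this step also do not survive scrutiny: the Gram matrix of $K$ in the basis $(H,H+s)$ is $\bigl(\begin{smallmatrix}2&2\\2&2+s^2\end{smallmatrix}\bigr)$, so $\disc(K)=2s^2$ rather than $4+2s^2-(s^2)^2$; $\disc(K)$ need not be coprime to $11$ (take $\kappa^2=-22$); and ``$\Pic(\tY_\A)$ has no nontrivial overlattices, hence $[\Pic(\tY_\A):K\oplus K^\bot]=1$'' is a non sequitur --- the absence of overlattices of a fixed lattice says nothing about whether a primitive sublattice plus its complement exhausts it, and typically it does not.

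The second gap is that you only argue the ``only if'' congruence. The proposition is an equivalence, so one must also show that every $d\equiv 0\pmod 8$ with $d>8$ is actually attained, i.e., that $\SN$ \emph{primitively} represents every even negative integer at most $-4$ (here $\kappa^2=-d/4$, and $\SN$ has no $(-2)$-classes, consistently with the exclusion of $d=8$). Nothing in your sketch produces such representations; this is where the paper does real work, exhibiting a rank-$5$ sublattice of $\SN$ with diagonal form $(-4,-4,-4,-6,-8)$ and invoking \cite[Section 6(iii)]{Bharg} for the last four variables, the first variable guaranteeing primitivity. The one part of your proposal that is correct as stated is the final deduction about associated K3 surfaces: no multiple of $8$ is congruent to $2$ or $4$ modulo $8$.
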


\begin{proof}
Assume that $\tY_\A$ is special of   discriminant $d $.\ Since $\Pic(\tY_\A)\isom \Z H\oplus \SN$,   the required lattice~$K$ as above is of the form $\langle H,\kappa\rangle$, where $\kappa\in \SN$ is primitive.\ Since $\Disc(\SN)\isom (\Z/11\Z)^2$, the divisibility   $\div_{\SN}(\kappa)$  divides $11$ and, since $\Disc(H^\bot)\isom (\Z/2\Z)^2$ (see \cite[(1)]{dm}), the  divisibility   $\div_{H^\bot}(\kappa)$ divides~$2$, but also divides $\div_{\SN}(\kappa)$ (because $\SN\subset H^\bot$).\ It follows that $\div_{H^\bot}(\kappa)=1$.\ The lattice $\langle H,\kappa\rangle^\bot$ therefore has discriminant~$4\kappa^2$  by the formula \cite[(4)]{dm}.\ 

It follows that  $\tY_\A$ is special of discriminant $d$ if and only if $d\equiv 0 \pmod8$ and $\SN$ primitively represents $-d/4$.\
A direct computation shows that the lattice $\SN$ contains the rank-$5$ lattice with diagonal quadratic form 
  $(-4,-4,-4,-6,-8)$.\ By \cite[Section 6(iii)]{Bharg}, the quadratic form on the last four   variables  represents every even negative integer with the exception of $-2$, and the first variable can be used to ensure that all these integers can  be primitively represented.\ This proves the proposition.
\end{proof}

   \subsection{Double EPW surfaces and their automorphisms}\label{sec3}
   
 Let  $Y_A\subset \P(V_6)$ be an EPW sextic, where $A\subset \bw3V_6$ is a Lagrangian subspace with no decomposable vectors.\ By \cite[Theorem~5.2(2)]{dkcovers}, there is a canonical connected double  covering 
\begin{equation}\label{piA2}
\tY_A^{\ge 2}\lra Y_A^{\ge 2}
\end{equation}
 between integral surfaces,  with covering involution $\tau$, branched over the finite set $Y_A^{\ge 3}$.
 
  
  We  compare automorphisms of $Y_A$ with those of $\tY_A^{\ge 2}$.\ Any automorphism of $Y_A$ induces an automorphism of its singular locus $Y_A^{\ge 2}$.\ This defines a morphism 
  $\Aut(Y_A)\to \Aut(Y_A^{\ge 2})$.\ Since $\Aut(Y_A)$ is a subgroup of $\PGL(V_6)$ and the surface 
 $Y_A^{\ge 2}$ is not contained in a hyperplane, this morphism is injective. 

 
 \begin{prop}[Kuznetsov]\label{split2}
 Let $A\subset \bw3V_6$ be a Lagrangian subspace with no decomposable vectors.\ Any element
 of $\Aut(Y_A)$ lifts to an automorphism of  $\tY_A^{\ge 2}$.\ These lifts form a subgroup  of 
   $  \Aut(\tY_A^{\ge 2})$  which is  isomorphic to the group  $\widetilde\Aut(Y_A)$ in the extension~\eqref{exttilde} via an isomorphism that takes $\langle \tau\rangle$ to $\mmu_2$.
%
\end{prop}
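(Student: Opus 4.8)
The plan is to mirror the proof of Proposition~\ref{split1}, replacing the double EPW sextic by the double EPW surface and tracking a single change of sign. By \cite[Theorem~5.2(2)]{dkcovers}, the double cover~\eqref{piA2} is realized as a relative spectrum
\[
\tY_A^{\ge 2}=\Spec\bigl(\cO_{Y_A^{\ge 2}}\oplus \cR_2(-m)\bigr),
\]
where $\cR_2$ is the relevant Lagrangian cointersection sheaf (the analogue for the stratum $Y_A^{\ge 2}$ of the sheaf $\cR_1=\coker(\cA_2\hra\cA_1^\vee)$ from the proof of Proposition~\ref{split1}), built from the same Lagrangian subbundles $\cA_1=A\otimes\cO_{\P(V_6)}$ and $\cA_2=\bw2T_{\P(V_6)}(-3)$ of $\bw3V_6\otimes\cO_{\P(V_6)}$, where $\cO(-m)$ denotes the appropriate power of $\cO_{\P(V_6)}(-1)$ restricted to $Y_A^{\ge 2}$, and where the algebra structure is given by a canonical symmetric pairing $\cR_2(-m)^{\otimes2}\to\cO_{Y_A^{\ge 2}}$. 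Exactly as in Proposition~\ref{split1}, the subbundles $\cA_1$ and $\cA_2$ are invariant under the action of $\widetilde\Aut(Y_A)$ on $\bw3V_6$ (because $A$ is), so $\cR_2$ is $\widetilde\Aut(Y_A)$-equivariant; since $\cO_{\P(V_6)}(-1)$ carries a $G$-linearization, so does $\cO(-m)$, hence so does $\cR_2(-m)$; and the pairing is canonical, hence equivariant. Therefore $\widetilde\Aut(Y_A)$ acts on this sheaf of $\cO_{Y_A^{\ge 2}}$-algebras and so on $\tY_A^{\ge 2}$, covering its action on $Y_A^{\ge 2}$. Choosing for each $g\in\Aut(Y_A)$ a preimage in $G\subset\SL(V_6)$ and passing to $\widetilde\Aut(Y_A)=G/\mmu_3$ shows that every automorphism of $Y_A$ lifts to $\tY_A^{\ge 2}$, and yields a homomorphism $\widetilde\Aut(Y_A)\to\Aut(\tY_A^{\ge 2})$.

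The heart of the matter, and the point where the surface case diverges from the sextic case, is the action of the central $\mmu_2$. Its nontrivial element is represented by $-\Id_{V_6}\in\SL(V_6)$, which acts on $\bw3V_6$—hence on both $A$ and $A^\vee$, hence on $\cR_2$—by $(-1)^3=-1$, and on $\cO_{\P(V_6)}(-1)$ by $-1$; so its weight on the odd summand $\cR_2(-m)$ is $(-1)\cdot(-1)^m$. In Proposition~\ref{split1} the twist was $\cO(-3)$ (odd $m$), the two signs cancelled, $\mmu_2$ acted trivially, and~\eqref{central} split. For the surface cover, by contrast, the $\cO(-1)$-power entering \cite[Theorem~5.2(2)]{dkcovers} is even, so the weight is $-1$: the nontrivial element of $\mmu_2$ acts on $\tY_A^{\ge 2}$ as the covering involution $\tau$, giving an isomorphism $\mmu_2\isomto\langle\tau\rangle$. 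Confirming this parity—that the twist in the surface construction is an even power of $\cO(-1)$, so that the $\mmu_2$-weight is $-1$ and not $+1$—is the one genuinely case-specific computation, and I expect it to be the main obstacle.

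Finally, the homomorphism $\widetilde\Aut(Y_A)\to\Aut(\tY_A^{\ge 2})$ is injective: any element of its kernel acts trivially on $\tY_A^{\ge 2}$, hence on the base surface $Y_A^{\ge 2}$, so its image in $\Aut(Y_A)$ is trivial by the injectivity of $\Aut(Y_A)\hra\Aut(Y_A^{\ge 2})$ recalled before the statement (which comes from the fact that $Y_A^{\ge 2}$ spans $\P(V_6)$); thus the element lies in $\mmu_2$, and since $\mmu_2\isomto\langle\tau\rangle$ is injective it is trivial. The image is exactly the set of all lifts of elements of $\Aut(Y_A)$, an extension of $\Aut(Y_A)$ by $\langle\tau\rangle$, isomorphic to $\widetilde\Aut(Y_A)$ via an isomorphism carrying $\langle\tau\rangle$ to $\mmu_2$, as asserted.
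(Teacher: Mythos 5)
Your strategy is the paper's own: realize the surface cover as a relative $\Spec$, carry the $\widetilde\Aut(Y_A)$-equivariance through the construction, and decide the proposition by computing the weight of the central $\mmu_2$ on the odd summand. You also correctly isolated that weight computation as the crux. But the case-specific input you guessed is wrong on both counts, and your conclusion survives only because the two errors cancel. By \cite[Theorem~5.2(2)]{dkcovers}, as quoted in the paper's proof, one has $\tY^{\ge 2}_A=\Spec(\cO_{Y^{\ge 2}_A}\oplus\cR_2(-3))$, where $\cR_2=(\bw2\cR_1\vert_{Y^{\ge 2}_A})^{\vee\vee}$: the twist is the \emph{odd} power $\cO(-3)$, the same as for the double sextic, not an even one; and $\cR_2$ is not another Lagrangian cointersection sheaf of weight $-1$, but the reflexive hull of the second wedge power of the restriction of $\cR_1$ to the surface (along which $\cR_1$ has rank $2$), so the nontrivial element of $\mmu_2$, acting by $-1$ on $\cR_1$, acts on $\cR_2$ by $(-1)^2=+1$. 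The correct bookkeeping is therefore $(+1)\cdot(-1)=-1$ on $\cR_2(-3)$, whereas yours was $(-1)\cdot(+1)=-1$: same product, both factors wrong. This matters: if $\cR_2$ really carried weight $-1$ as you assumed, then with the actual twist $\cO(-3)$ the weight on $\cR_2(-3)$ would be $+1$, the central $\mmu_2$ would act trivially exactly as in Proposition~\ref{split1}, and the statement would fail. So the parity question you deferred as ``the main obstacle'' is not a routine check but precisely where your proposed argument, as written, breaks.

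Everything else in your write-up is sound and matches the paper: the $\widetilde\Aut(Y_A)$-invariance of $\cA_1$ and $\cA_2$, hence the equivariance of $\cR_1$ and $\cR_2$; the $G$-linearization of $\cO_{\P(V_6)}(-1)$ inducing one on the twist; and your closing injectivity argument---which the paper leaves implicit---using the injectivity of $\Aut(Y_A)\hra\Aut(Y_A^{\ge 2})$ is correct. Once the actual description $\cR_2=(\bw2\cR_1\vert_{Y^{\ge 2}_A})^{\vee\vee}$ and the twist $\cO(-3)$ are substituted, your sign computation gives the weight $-1$ on the odd summand, the nontrivial element of $\mmu_2$ acts as $\tau$, and the proof closes exactly as the paper's does.
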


 \begin{proof}
 The proof follows the exact same steps as the proof of Proposition~\ref{split1}, whose notation we keep.\ By \cite[Theorem~5.2(2)]{dkcovers}, the surface $\tY_A^{\ge 2}$ is defined as
\begin{equation}\label{y2}
\tY^{\ge 2}_A = \Spec(\cO_{Y^{\ge 2}_A} \oplus \cR_2(-3)),
\end{equation}
where 
$
\cR_2 = (\bw2\cR_1\vert_{Y^{\ge 2}_A})^{\vee\vee}$.\ As in the proof of Proposition~\ref{split1}, the group $\widetilde\Aut(Y_A) $  acts on $\tY^{\ge 2}_A$ and the  nontrivial element of  $ \mmu_2  $ acts by~$-1$ on both~$\cR_1$
and   $\cO(-3)$.\ It follows that it acts by~$1$ on $ \cR_2$ and by $-1$ on $ \cR_2(-3)$, hence as the involution~$\tau$ on $\tY^{\ge 2}_A$.\ This proves  the proposition.
\end{proof}

   It is possible to deform the double cover~\eqref{piA2} to the canonical double \'etale covering associated with the (smooth) variety of lines on a quartic double solid (see the proof of \cite[Proposition~2.5]{dkij}), so we can use Welters' calculations  in \cite[Theorem (3.57) and Proposition~(3.60)]{wel}.\ In particular,      the abelian group $H_1(\tY_A^{\ge 2},\Z)$ is free of rank $20$ (and $\tau$  acts as $-\Id$) and  there are canonical isomorphisms (\cite[Proposition~2.5]{dkij})
\begin{equation}
\begin{aligned}
T_{\Alb (\tY_A^{\ge 2}),0}&\isom H^1(\tY_A^{\ge 2},\cO_{\tY_A^{\ge 2}})\isom A,
\label{h1}\\
H^2(Y_A^{\ge 2},\C)&  \isom \bw2 H^1(\tY_A^{\ge 2},\C)\isom \bw2(A\oplus \bar A).
\end{aligned}
\end{equation}
The Albanese variety $\Alb (\tY_A^{\ge 2})$ is thus an \av\ of dimension 10 and one can consider the analytic representation (see Section~\ref{sectb1})
 $$\rho_a\colon \Aut(\tY_A^{\ge 2})\lra \GL(T_{\Alb (\tY_A^{\ge 2}),0})\isom\GL(A).
$$
Recall from Proposition~\ref{split2} that there is an injective morphism $\widetilde\Aut(Y_A)\hra \Aut(\tY_A^{\ge 2})$.

\begin{prop}\label{propc7}
Let $Y_A$ be a quasi-smooth EPW sextic.\ 
The restriction of the analytic representation $\rho_a$ to the subgroup $\widetilde\Aut(Y_A)$ of $ \Aut(\tY_A^{\ge 2})$ is the injective middle vertical map in the diagram~\eqref{esss}.
\end{prop}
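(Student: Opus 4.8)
The plan is to show that the canonical identification $T_{\Alb(\tY_A^{\ge2}),0}\isom A$ of~\eqref{h1} intertwines the analytic action $\rho_a$ of $\widetilde\Aut(Y_A)$---viewed inside $\Aut(\tY_A^{\ge2})$ through the lift of Proposition~\ref{split2}---with the action of $\widetilde\Aut(Y_A)$ on $A\subset\bw3V_6$, which is by definition the middle vertical map of~\eqref{esss}. By Hodge theory, $\rho_a$ is the action induced on $H^1(\tY_A^{\ge2},\cO_{\tY_A^{\ge2}})$; since $\tau$ acts as $-\Id$ on $H_1(\tY_A^{\ge2},\Z)$, hence on $T_{\Alb(\tY_A^{\ge2}),0}$, the invariant summand $H^1(Y_A^{\ge2},\cO_{Y_A^{\ge2}})$ vanishes and the decomposition~\eqref{y2} identifies $H^1(\tY_A^{\ge2},\cO_{\tY_A^{\ge2}})$ with the anti-invariant summand $H^1(Y_A^{\ge2},\cR_2(-3))$. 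It therefore suffices to compare the two actions along the chain of isomorphisms~\eqref{h1}.

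First I would observe that every sheaf entering the construction of~\eqref{h1} carries the $\widetilde\Aut(Y_A)$-equivariant structure already produced in the proofs of Propositions~\ref{split1} and~\ref{split2}. Indeed, $\cA_1=A\otimes\cO_{\P(V_6)}$ and $\cA_2=\bw2T_{\P(V_6)}(-3)$ are $\widetilde\Aut(Y_A)$-invariant subbundles of $\bw3V_6\otimes\cO_{\P(V_6)}$, so the cointersection sheaf $\cR_1=\coker(\cA_2\hra\cA_1^\vee)$, its restriction to $Y_A^{\ge2}$, and $\cR_2=(\bw2\cR_1\vert_{Y_A^{\ge2}})^{\vee\vee}$ are all equivariant, while the $G$-linearization of $\cO_{\P(V_6)}(-1)$ furnishes the equivariant twist by $\cO(-3)$ exactly as in~\eqref{y2}. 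The essential point is that the equivariant structure on $\cA_1$ is tautologically the action of $\widetilde\Aut(Y_A)$ on $A$ given by the middle vertical map of~\eqref{esss}.

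Next I would use that the isomorphisms~\eqref{h1}, coming from Welters' computation (\cite[Theorem~(3.57) and Proposition~(3.60)]{wel}) as transcribed in~\cite[Proposition~2.5]{dkij}, are canonical: they are manufactured functorially from the Lagrangian $A$ and the embedding $Y_A^{\ge2}\subset\P(V_6)$, with no auxiliary choices. Any group acting compatibly on all the input data therefore acts compatibly on the output, so~\eqref{h1} is $\widetilde\Aut(Y_A)$-equivariant; combined with the previous identification of the equivariant structure on $\cA_1$, this shows that $\rho_a$ restricted to $\widetilde\Aut(Y_A)$ coincides, under~\eqref{h1}, with the middle vertical map of~\eqref{esss}.

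The hard part will be this last step: confirming that Welters' isomorphism $A\isom H^1(Y_A^{\ge2},\cR_2(-3))$ is genuinely functorial in $A$---compatible with the equivariant structures---rather than merely an isomorphism of vector spaces, and in particular tracking the operations $\bw2(-)$, $(-)^{\vee\vee}$, and the twist by $\cO(-3)$ closely enough to confirm that the induced action on $A$ is the map of~\eqref{esss} itself and not a dual or tensor power of it. As a consistency check that removes any residual sign ambiguity, one notes that both representations send the nontrivial element of $\mmu_2\subset\widetilde\Aut(Y_A)$ to $-\Id_A$: the middle vertical map because $\mmu_2$ acts by $-1$ on $A$, and $\rho_a$ because $\tau$ acts as $-\Id$ on $H_1(\tY_A^{\ge2},\Z)$.
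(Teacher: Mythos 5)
Your framework is the same as the paper's---equip $\cA_1$, $\cA_2$, $\cR_1$, $\cR_2$, and $\cO_{\P(V_6)}(-3)$ with the $\widetilde\Aut(Y_A)$-equivariant structures from Propositions~\ref{split1} and~\ref{split2}, reduce via~\eqref{y2} to the action on $H^1(Y_A^{\ge 2},\cR_2(-3))$, and trace the action through the identification~\eqref{h1}---but you stop exactly where the proof has to happen. You say it yourself: ``the hard part will be this last step,'' namely verifying that the induced action on $A$ is the middle vertical map of~\eqref{esss} ``and not a dual or tensor power of it.'' An appeal to the isomorphism~\eqref{h1} being ``canonical, with no auxiliary choices'' does not settle this: canonicity guarantees equivariance for \emph{some} action on the target, but determining \emph{which} representation appears is the entire content of the proposition. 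Your proposed consistency check is too weak to close this: the nontrivial element of $\mmu_2$ acts as $-\Id$ on both $A$ and $A^\vee$, so the sign of $\tau$ cannot distinguish the representation from its dual. Since the proposition is stated for an arbitrary quasi-smooth $Y_A$, where the $\widetilde\Aut(Y_A)$-representation $A$ has no reason to be self-dual (self-duality of $\bw2\xi$ is a special feature of the Klein example), the $A$-versus-$A^\vee$ ambiguity is a genuine one, and your argument as written leaves it open.

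The paper closes the gap with two concrete ingredients you never invoke. First, Serre duality on the surface $Y_A^{\ge 2}$: since $\cR_2$ is its canonical sheaf, one gets an equivariant isomorphism $H^1(Y_A^{\ge 2},\cR_2(-3))\isom H^1(Y_A^{\ge 2},\cO_{Y_A^{\ge 2}}(3))^\vee$, which converts the problem into computing the action on $H^1(Y_A^{\ge 2},\cO_{Y_A^{\ge 2}}(3))$. Second, the explicit $\widetilde\Aut(Y_A)$-equivariant resolution of $\cO_{Y_A^{\ge 2}}$ from \cite[(33)]{dkeven}, which yields an equivariant isomorphism
$$H^1(Y_A^{\ge 2},\cO_{Y_A^{\ge 2}}(3))\isom H^3(\P(V_6),(\cA_1^\vee\otimes \cA_2)(-3)) = A^\vee\otimes H^3(\P(V_6), \cA_2(-3)),$$
where $\widetilde\Aut(Y_A)$ acts trivially on the one-dimensional factor $H^3(\P(V_6),\cA_2(-3))$ because that action is induced by $\PGL(V_6)$, which has no nontrivial characters (as already observed in the proof of Proposition~\ref{split1}). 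Dualizing then identifies the tangent-space representation with $A$ itself, carrying the middle vertical map of~\eqref{esss}---precisely resolving the dual-versus-original ambiguity. So your outline identifies the right reduction and correctly flags the critical verification, but without the Serre duality step and the equivariant resolution (or some equally explicit tracking of Welters' identification, e.g.\ through \cite[Proposition~2.5]{dkij}), the proof is incomplete at its decisive point.
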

 
 \begin{proof}
 The morphism $\rho_a$ is the  representation of the group $\Aut(\tY_A^{\ge 2})$ on the vector space
  $$
T_{\Alb (\tY_A^{\ge 2}),0}\isom H^1(\tY_A^{\ge 2},\cO_{\tY_A^{\ge 2}}).
$$
As in  the proof of \cite[Proposition~2.5]{dkij}), there are canonical isomorphisms
 $$
H^1(\tY_A^{\ge 2},\cO_{\tY_A^{\ge 2}})\isom   H^1(Y_A^{\ge 2},\cR_2(-3)) \isom   H^1(Y_A^{\ge 2},\cO_{Y_A^{\ge 2}}(3))^\vee 
,
$$
where the first isomorphism comes from~\eqref{y2} and the second one from Serre duality (because~$ \cR_2 $ is the canonical sheaf of $Y_A^{\ge 2} $).

As in the proof of Proposition~\ref{split2}, the   sheaf $ \cO_{Y_A^{\ge 2}}(3)$ has an  $\widetilde\Aut(Y_A) $-linearization, where~$\Aut(Y_A)$ acts on  $Y^{\ge 2}_A$ by restriction and the  nontrivial element of~$ \mmu_2  $ acts by~$-1$ on~$ \cO_{Y_A^{\ge 2}}(3)$.\  

By construction, the resolution 
$$0\to (\bw2\cA_2)(-6)\to (\cA_1^\vee\otimes \cA_2)(-6)\to (\Sym^2\!\cA_1)(-6)\oplus\cO_{\P(V_6)}(-6)\to 
\cO_{\P(V_6)}\to \cO_{Y_A^{\ge 2}}\to 0
$$
  given in \cite[(33)]{dkeven} is $\widetilde\Aut(Y_A) $-equivariant, hence induces an   $\widetilde\Aut(Y_A)$-equivariant isomorphism
$$H^1(Y_A^{\ge 2},\cO_{Y_A^{\ge 2}}(3))\isom H^3(\P(V_6),(\cA_1^\vee\otimes \cA_2)(-3)) =A^\vee\otimes H^3(\P(V_6), \cA_2(-3)).
$$
As already noted during the proof of Proposition~\ref{split1}, $\widetilde\Aut(Y_A)$ acts trivially on the $1$-dimensional vector space $H^3(\P(V_6), \cA_2(-3))=H^3(\P(V_6), \bw2T_{\P(V_6)}(-6))$.\ All this proves that the action of~$\Aut(\tY_A^{\ge 2})$ on $T_{\Alb (\tY_A^{\ge 2}),0}$ is indeed given by the desired morphism.
\end{proof}

 \subsection{Automorphisms of GM varieties}\label{appc4}
 Let as before $V_6$ be a 6-dimensional vector space and let $A\subset \bw3V_6$ be a Lagrangian subspace with no decomposable vectors.\ Let $V_5\subset V_6$ be a hyperplane and let $X$ be the associated (smooth ordinary) GM variety (Section~\ref{se22n}).\ One has (see~\eqref{autxa})
    \begin{equation*}
   \Aut(X)\isom \{ g\in  \PGL(V_6)\mid \bw3g(A)=A,\ g(V_5)=V_5\}.
    \end{equation*}
Since the extension~\eqref{exttilde}  splits (Proposition~\ref{split1}), there is a lift
        \begin{equation}\label{repx2}
   \Aut(X)\lra  \GL(A)
    \end{equation}
(see~\eqref{esss})  which  is injective by Lemma~\ref{nlem}.

 When the dimension of $X$ is either $3$ or $5$,  its intermediate Jacobian~$\Jac(X)$  is a 10-dimensional \av.\ 
 By \cite[Theorem~1.1]{dkij}, it is 
canonically isomorphic to
$
\Alb (\tY_A^{\ge 2})
$ (see~\eqref{jxa}).\ Therefore, there is
 an isomorphism
\begin{equation*}
T_{\Jac(X),0}\isomlra T_{\Alb (\tY_A^{\ge 2}),0}.
\end{equation*}
Together with the isomorphism~\eqref{h1}, this gives an  
 analytic representation 
 $$\rho_{a,X}\colon \Aut(X)\lra \GL(T_{\Jac(X),0})\isomlra \GL(A).$$
    
\begin{prop}\label{propc8}    
The analytic representation 
 $\rho_{a,X}$ coincides with the injective morphism~\eqref{repx2}.\ Equivalently, the isomorphism ~\eqref{jxa} is $\Aut(X)$-equivariant.
\end{prop}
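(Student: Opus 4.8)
The plan is to reduce the statement to an equivariance check at the level of tangent spaces, and then to identify both representations with the canonical one appearing in the diagram~\eqref{esss}. The two assertions of the proposition are equivalent: the isomorphism~\eqref{jxa} is an isomorphism of abelian varieties, so it is $\Aut(X)$-equivariant if and only if its differential at the origin is $\Aut(X)$-equivariant, and this differential is precisely the composite $T_{\Jac(X),0}\isomto T_{\Alb(\tY_A^{\ge2}),0}$ used to define $\rho_{a,X}$. Thus it suffices to prove that $\rho_{a,X}$, viewed as a map $\Aut(X)\to\GL(A)$ through~\eqref{h1}, coincides with the morphism~\eqref{repx2}.

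The key geometric input is that the isomorphism~\eqref{jxa} of \cite[Theorem~1.1]{dkij} is canonical, hence functorial in the data $(V_6,A,V_5)$. First I would recall that $\Aut(X)$ embeds into $\Aut(Y_A)$ via~\eqref{autxa} as the stabilizer of the hyperplane $V_5$, and that every such automorphism lifts canonically to $\tY_A^{\ge2}$ by Proposition~\ref{split2}, giving the inclusion $\Aut(X)\hra\widetilde\Aut(Y_A)\hra\Aut(\tY_A^{\ge2})$. Under this inclusion, the analytic representation $\rho_a$ of $\Aut(\tY_A^{\ge2})$ on $T_{\Alb(\tY_A^{\ge2}),0}\isom A$ restricts, by Proposition~\ref{propc7}, to exactly the middle vertical map of~\eqref{esss}, which is the morphism~\eqref{repx2}. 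So the entire content is to check that the action of $\Aut(X)$ on $T_{\Jac(X),0}$ transported through~\eqref{jxa} agrees with its action on $T_{\Alb(\tY_A^{\ge2}),0}$ coming from the lift to $\tY_A^{\ge2}$.

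This last compatibility follows from the naturality of the construction in \cite{dkij}: the isomorphism~\eqref{jxa} is built from the GM data in a way that commutes with any automorphism preserving that data, so an automorphism $g\in\Aut(X)$ acts on the source $\Jac(X)$ via its action on $X$ and on the target $\Alb(\tY_A^{\ge2})$ via the induced automorphism of $Y_A^{\ge2}$ and its lift; the canonicity forces the square to commute. I would therefore invoke the functoriality of \cite[Theorem~1.1]{dkij} with respect to automorphisms of the triple $(V_6,V_5,A)$, which is the statement that the assignment $(V_6,V_5,A)\mapsto(\Jac(X),\theta_X)\isom(\Alb(\tY_A^{\ge2}),\theta)$ is an isomorphism of functors on the relevant groupoid.

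The main obstacle is that the isomorphism~\eqref{jxa} is not given by an explicit formula that one can differentiate directly, so the proof rests entirely on establishing its functoriality in the automorphism. Concretely, one must verify that the construction in \cite{dkij}---which passes through a degeneration to the variety of lines on a quartic double solid and Welters' description of the associated Prym-type data---is compatible with the group action at every intermediate step. Once that naturality is in hand, the identification of $\rho_{a,X}$ with~\eqref{repx2} is immediate from Proposition~\ref{propc7}; the whole difficulty is therefore in certifying the equivariance of a canonical but non-explicit isomorphism, which I would argue by appealing to the uniqueness built into its canonical characterization rather than by unwinding the degeneration by hand.
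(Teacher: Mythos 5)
Your reduction to a tangent-space statement and your use of Proposition~\ref{propc7} to identify the action on $T_{\Alb(\tY_A^{\ge2}),0}\isom A$ with the middle vertical map of~\eqref{esss} both match the paper's framework. But the core of your argument --- that the isomorphism~\eqref{jxa} is ``canonical, hence functorial,'' certified ``by appealing to the uniqueness built into its canonical characterization'' --- is a genuine gap, and in fact it is exactly the point where the construction of \cite{dkij} resists such an appeal. The isomorphism of \cite[Theorem~4.4]{dkij} is \emph{not} choice-free: for $\dim(X)=3$ it is built from the Abel--Jacobi map $\AJ_{Z_{L_0}}\colon H_1(\tY_A^{\ge2},\Z)\to H_3(X,\Z)$ of a family of curves $Z_{L_0}\subset X\times\tY_A^{\ge2}$ that depends on an auxiliary line $L_0\subset X$ (a plane $\Pi_0$ when $\dim(X)=5$), and an automorphism $g\in\Aut(X)$ has no reason to fix $L_0$. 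So ``an automorphism preserving the data'' does not preserve the construction on the nose, and no uniqueness statement is available that would force the square to commute: an isomorphism of \ppavs\ is in general not unique (it can be composed with automorphisms of the target), and \cite[Theorem~1.1]{dkij} gives no characterization that pins it down abstractly. Your proposal thus asserts the conclusion at the decisive step rather than proving it.

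The paper's proof supplies precisely the missing mechanism, in two parts: (i) the functoriality of Abel--Jacobi maps under correspondences (\cite[Lemma~3.1]{dkij}), giving
$\AJ_{Z_{L_0}}\circ g_*=\AJ_{(\Id_X\times g)^*(Z_{L_0})}=\AJ_{(g\times\Id)_*(Z_{g^{-1}(L_0)})}=g_*\circ \AJ_{Z_{g^{-1}(L_0)}}$; and (ii) the key fact that, although the family $Z_{L_0}$ depends on $L_0$, the induced map $\AJ_{Z_{L_0}}$ does not, so that $\AJ_{Z_{g^{-1}(L_0)}}=\AJ_{Z_{L_0}}$ and equivariance follows. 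Point (ii) is what replaces your invocation of canonicity, and it is a substantive input from \cite{dkij}, not a formal consequence of the construction being natural. Incidentally, your description of the construction as passing through the degeneration to the variety of lines on a quartic double solid conflates two different ingredients: that degeneration (via Welters) is used for the structure of $\tY_A^{\ge2}$ and its $H_1$, whereas~\eqref{jxa} itself comes from the Abel--Jacobi family, which is where the equivariance must be, and in the paper is, checked.
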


\begin{proof}
Assume $\dim(X)=3$ and choose a  line $L_0\subset X$.\ The isomorphism  $
\Alb (\tY_A^{\ge 2})\isomto \Jac(X)
$ was then constructed in \cite[Theorem~4.4]{dkij} from the Abel--Jacobi map
$$\AJ_{Z_{L_0}}\colon H_1( \tY_A^{\ge 2},\Z)\lra H_3(X,\Z)
$$
 associated with   a family $Z_{L_0}\subset X\times \tY_A^{\ge 2}$  of curves on $X$ parametrized by $\tY_A^{\ge 2}$.\ Although the family $Z_{L_0}$ does depend on the choice of $L_0$, the   map $\AJ_{Z_{L_0}}$  does not.

Let $g\in \Aut(X)$ (also considered as an automorphism  of $\tY_A^{\ge 2}$).\ By the functoriality properties of the Abel--Jacobi map (\cite[Lemma~3.1]{dkij}), we obtain
$$\AJ_{Z_{L_0}}\circ g_*=\AJ_{(\Id_{X}\times g)^*(Z_{L_0})}=
\AJ_{( g\times \Id_{\tY_A^{\ge 2}})_*(Z_{g^{-1}(L_0)})}=g_*\circ \AJ_{Z_{g^{-1}(L_0)}},
$$
which proves the proposition.\ 
 When $\dim(X)=5$, the proof is similar, except that $Z_{\Pi_0}$ is now a family of surfaces in $X$ that depends  on a plane $\Pi_0\subset X$.
\end{proof}

 \section{Representations of the group $\G$}\label{sea2}
 
  The group $\G:=\PSL(2,\F_{11})$   is the only simple group of order $660=2^2\cdot 3\cdot 5\cdot 11$.\ 
   It can be  generated by the classes  
 $$ a=\begin{pmatrix}
5 &0   \\
0  &  9  
\end{pmatrix},\quad 
b=\begin{pmatrix}
3 &5   \\
-5 & 3
\end{pmatrix},\quad
c=\begin{pmatrix}
 1 & 1   \\
0  &   1   
\end{pmatrix},
$$
and $a^5=-b^6=c^{11}=I_2$, the identity matrix.

  The group $\G$ has 8 irreducible $\C$-representations, of dimensions  $1$, $5$, $5$, $10$, $10$, $11$, $12$, and~$12$.\  Here is a  character table for four of these
  irreducible representations.
    \begin{table}[h]
\renewcommand\arraystretch{1.5}
 \begin{tabular}{|c|c|c|c|c|c|c|c|c|}
 \hline
Conjugation class&$[I_2]$&$[c]$&$[c^2]$&$[a]=[a^4]$&$[a^2]=[a^3]$&$[b]=[b^5]$&$[b^2]=[b^4]$&$[b^3]$
 \\
Cardinality&$1$&$60$&$60$&$132$&$132$&$110$&$110$&$55$
\\
Order&$1$&$11$&$11$&$5$&$5$&$6$&$3$&$2$
\\

 \hline
$\chi_0$
 &$1$&$1 $&$1 $ &$1$ &$1 $ &$1  $&$1  $&$1  $
\\
 \hline
 $\xi$&$5$&$\lambda$&$\bar\lambda$&$0$&$0$&$ 1$&$-1$&$1$
\\
 \hline
 $\xi^\vee$&$5$&$\bar\lambda$&$\lambda$&$0$ &$0$ &$1$&$-1$&$1$ 
\\
 \hline
$\bw2\xi $&$ 10$&$ -1$&$ -1$&$ 0$&$ 0$&$1 $&$ 1$&$ -2$
 \\
 \hline
 \end{tabular}
\vspace{5mm}
\captionsetup{justification=centering}
\caption{Partial character table for $\G$}\label{tab1}
\end{table}
\vspace{-5mm}

As before, we have set (where $\zeta_{11}=e^{\frac{2i\pi}{11}}$)
\begin{equation*}\label{defgamma}
 \lambda:=\zeta_{11}^{1^2}+\zeta_{11}^{2^2}+\zeta_{11}^{3^2}+\zeta_{11}^{4^2}+\zeta_{11}^{5^2}=\zeta_{11}+\zeta_{11}^3+\zeta_{11}^4+\zeta_{11}^5+\zeta_{11}^9=\tfrac12(-1+\sqrt{-11}).
\end{equation*}

The representation $\xi$ has a realization in the matrix ring $\cM_5(\C)$ for which
\begin{equation}\label{real}
\xi(a)= \begin{pmatrix}
 0&0&0&0&1\\
1&0&0&0&0\\
0&1&0&0&0\\
0&0&1&0&0\\
0&0&0&1&0
\end{pmatrix},\quad
\xi(c)= \begin{pmatrix}
 \zeta_{11}&0&0&0&0\\
0&\zeta_{11}^4&0&0&0\\
0&0&\zeta_{11}^5&0&0\\
0&0&0&\zeta_{11}^9&0\\
0&0&0&0&\zeta_{11}^3
\end{pmatrix} 
.
\end{equation}

Every irreducible character  of $\G$ has Schur index 1 (\cite[\S~12.2]{ser}, \cite[Theorem~6.1]{fei}).\ 
 In particular, the 
  representation $\bw2\xi$, having an integral character, 
  can be defined over $\Q$ and even, by a theorem of Burnside (\cite{bur}), over $\Z$, that is, by a  morphism $\G\to \GL(10,\Z)$.\ The representation $\bw2\xi$ is self-dual, so there is a  $\G$-equivariant isomorphism
\begin{equation}\label{defu}
w\colon \bw2V_\xi\isomlra \bw2V_\xi^\vee,
\end{equation}
unique up to multiplication by a nonzero scalar, and it  is    symmetric (\cite[prop.~38]{ser}).\

\section{Decomposition of abelian varieties with automorphisms}\label{b2}

We gather here a few very standard notation and facts about abelian varieties.\ Let $X$ be a complex abelian variety.\ 
We denote by $\Pic(X)$  the group of isomorphism classes of line bundles on~$X$, by $\Pic^0(X)\subset \Pic(X)$  the subgroup of classes of line bundles   that are algebraically equivalent to $0$, and by $\NS(X)$  the N\'eron--Severi group  $\Pic(X)/\Pic^0(X)$, a free abelian group of finite rank.\ The group $\Pic^0(X)$ has a canonical structure of an abelian variety; it is called the dual abelian variety.\ 
Any endomorphism $u$ of $X$ induces an  endomorphism $\widehat u$ of $\Pic^0(X)$.

Given the class $\theta\in \NS(X)$  of a line bundle $L$ on $X$, we let $\phi_{\theta}$ be the  morphism 
 $$
\begin{aligned}
 X&\lra \Pic^0(X) \\ 
x& \longmapsto \tau_x^*L\otimes L^{-1}  
\end{aligned}
$$
of \avs, where $\tau_x$ is the translation  by $x$ (it is independent of the choice of the representative $L$ of $\theta$).\ When $\theta$ is a polarization, that is, when $L$ is ample,  $\phi_{\theta}$ is an  isogeny.\ 

We say that $\theta$ is a   principal polarization when $\phi_\theta$ is an isomorphism.\ If $n:=\dim(X)$, this is equivalent to saying that the self-intersection number $\theta^n$ is $n!$.\ The associated {\em Rosati involution}  on $\End(X)$ is then defined  by   
$u\mapsto u':=\phi_{\theta}^{-1}\circ \widehat u  \circ \phi_{\theta}$.\ The map
 $$
\begin{aligned}
\iota_{\theta}\colon \NS(X) &\lhra \End(X) \\ 
\theta'& \longmapsto \phi_{\theta}^{-1}\circ \phi_{\theta'}
\end{aligned}
$$
is an injective morphism of free abelian groups
whose image is the group 
 $\End^s(X)$
 of symmetric elements  for the Rosati involution (\cite[Theorem~5.2.4]{bl}).\ If $u\in \End(X)$, one has $\phi_{u^*\theta'}=\widehat u\circ \phi_{\theta'}\circ u$ hence
 \begin{equation}\label{for}
\iota_{\theta}(u^*\theta')=\phi_{\theta}^{-1}\circ \phi_{u^*\theta'}
=\phi_{\theta}^{-1}\circ \widehat u\circ \phi_{\theta'}\circ u=u'\circ \phi_{\theta}^{-1} \circ \phi_{\theta'}\circ u=
u'\circ \iota_{\theta}(\theta')\circ u.
\end{equation}

 Set $\NS_\Q(X)=\NS(X)\otimes \Q$ and $\End_\Q(X)=\End(X)\otimes \Q$ (both are finite-dimensional $\Q$-vector spaces).\ If the polarization $\theta$ is no longer principal, or if $\theta\in \NS_\Q(X)$ is only a $\Q$-polarization, the Rosati involution is still defined on $\End_\Q(X)$   by the same formula and
 we may   view  $\iota_{\theta}$ as an injective morphism
 $$
\begin{aligned}
\iota_{\theta}\colon \NS(X)_\Q &\lhra \End_\Q(X) 
 \end{aligned}
$$
 with image $\End^s_\Q(X)$ (\cite[Remark~5.2.5]{bl}).\ Formula \eqref{for} remains valid for $u\in \End(X)$ and $\theta'\in \NS(X)_\Q$.
 
 We will also need the so-called {\em analytic}    representation
\begin{equation*}
  \rho_a\colon   \End_\Q(X) \lhra\End_\C(T_{X,0}).
\end{equation*}
It sends an endomorphism of $X$ to its tangent map at $0$.

\subsection{$\Q$-actions on abelian varieties}\label{sectb1}

Let $X$ be an \av\ and let $G$ be a finite group.\  A $\Q$-action of $G$ on $X$ is a  morphism $\rho\colon \Q[G]\to \End_\Q(X)$ of $\Q$-algebras.\ The composition
$$
  G\xrightarrow{\ \rho\ }  \End_\Q(X) \xrightarrow{\ \rho_a\ } \End_\C(T_{X,0})
$$
is called the  analytic representation
 of $G$.

 \begin{prop}\label{propb1}
Let  $X$ be an \av\ of dimension~$n$ with a $\Q$-action of a finite group~$G$.\ Assume that  the analytic representation  of $G$  is irreducible and defined over~$\Q$.\ Then~$X$ is isogeneous to the product of $n$ copies of   an elliptic curve.
\end{prop}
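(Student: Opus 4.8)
The plan is to work entirely with the rational Hodge structure $H_1(X,\Q)$ carrying its $G$-action, and to reduce the statement to the assertion that a certain $2$-dimensional multiplicity space is the Hodge structure of an elliptic curve. Since $G$ acts on $X$ through $\End_\Q(X)$, it acts on $H_1(X,\Q)$ by morphisms of rational Hodge structures; equivalently, the complex structure $J$ on $H_1(X,\R)$ cutting out $T_{X,0}=H^{-1,0}$ commutes with the $G$-action. First I would complexify and record the isomorphism of $G$-representations $H_1(X,\C)\cong T_{X,0}\oplus\overline{T_{X,0}}\cong\rho_a\oplus\overline{\rho_a}$, where $\rho_a$ is the analytic representation.

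Because $\rho_a$ is irreducible with rational (hence real) character, one has $\overline{\rho_a}\cong\rho_a$; and because $\rho_a$ is in addition defined over $\Q$ while being $\C$-irreducible, it is absolutely irreducible. Thus there is a rational model $W_0$ with $W_{0,\C}\cong\rho_a$, $\dim_\Q W_0=n$, and, by Schur, $\End_{\Q[G]}(W_0)=\Q$ (its endomorphism algebra is a division algebra $D$ with $D\otimes_\Q\C=\End_{\C[G]}(\rho_a)=\C$, forcing $D=\Q$). Comparing characters, $H_1(X,\Q)$ is $W_0$-isotypic of multiplicity $2$, so the canonical isotypic decomposition gives
\[
H_1(X,\Q)\;\cong\;W_0\otimes_\Q U,\qquad U:=\Hom_{\Q[G]}\bigl(W_0,H_1(X,\Q)\bigr),\quad \dim_\Q U=2 .
\]

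Next I would transport the Hodge structure to the multiplicity space $U$. Since $J$ commutes with $G$ it lies in $\End_{\R[G]}(H_1(X,\R))$; as $W_{0,\R}$ remains irreducible with $\End_{\R[G]}(W_{0,\R})=\R$, the module $H_1(X,\R)\cong W_{0,\R}\otimes_\R U_\R$ has endomorphism ring $\End_\R(U_\R)\cong M_2(\R)$ acting on the $U_\R$-factor. Hence $J=\Id_{W_0}\otimes J_U$ with $J_U^2=-\Id$: giving $W_0$ the trivial type-$(0,0)$ Hodge structure, the factor $U$ inherits a Hodge structure of type $\{(-1,0),(0,-1)\}$ with each graded piece of dimension $1$. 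Forgetting $G$ and choosing a $\Q$-basis of $W_0$ yields $H_1(X,\Q)\cong U^{\oplus n}$ as rational Hodge structures. Since $H_1(X,\Q)$ is polarizable and the category of polarizable $\Q$-Hodge structures is semisimple, the summand $U$ is polarizable, hence is the rational Hodge structure of an elliptic curve $E$; by the equivalence between abelian varieties up to isogeny and polarizable weight-$(-1)$ Hodge structures, $H_1(X,\Q)\cong H_1(E,\Q)^{\oplus n}$ gives $X\isog E^{n}$.

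The main obstacle is the middle step: showing that the complex structure, which a priori only commutes with $G$, is genuinely of product form $\Id_{W_0}\otimes J_U$, so that the Hodge filtration descends to the $2$-dimensional multiplicity space rather than mixing the $W_0$-factor. This is exactly where "defined over $\Q$" is indispensable: combined with $\C$-irreducibility it forces absolute irreducibility and hence $\End_{\Q[G]}(W_0)=\Q$, which makes the $G$-equivariant commutant equal to $M_2(\R)$ and pins down $U$ as a plain $2$-dimensional elliptic Hodge structure. Without this, the commutant would be larger, $U$ could acquire extra endomorphisms, and the conclusion "isogeneous to a power of a single elliptic curve" could fail.
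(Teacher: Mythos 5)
Your proof is correct, but it is not the route the paper takes: the paper disposes of Proposition~\ref{propb1} by citation, invoking \cite[(3.1)--(3.4)]{es} (see also \cite[Section~1]{kr} and \cite[Proposition~13.6.2]{bl}) and adding only the remark that those references, though stated for genuine group actions, use nothing beyond the $\Q$-algebra morphism $\Q[G]\to\End_\Q(X)$. The mechanism in those references is the geometric counterpart of yours: one decomposes the abelian variety itself, writing $1=e_1+\cdots+e_n$ with the $e_i$ conjugate primitive idempotents of the simple factor of $\Q[G]$ through which it acts (a factor isomorphic to $\cM_n(\Q)$ precisely because the representation is absolutely irreducible with a rational model, i.e.\ Schur index $1$ over $\Q$), clearing denominators so that $me_i\in\End(X)$, and obtaining an isogeny $X\sim Y_1\times\cdots\times Y_n$ in which the $Y_i:=\Im(me_i)$ are mutually isogeneous and of dimension $1$ because each $e_iH_1(X,\Q)$ is $2$-dimensional. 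Your multiplicity space $U$ is exactly this $e_iH_1(X,\Q)$, so both proofs pivot on the same algebraic fact, $\End_{\Q[G]}(W_0)=\Q$; the difference is that you split the polarizable rational Hodge structure rather than the variety, and then reassemble $X$ through the equivalence between abelian varieties up to isogeny and polarizable Hodge structures of type $\{(-1,0),(0,-1)\}$. What your route buys is self-containedness, a transparent localization of where ``defined over $\Q$'' enters (it forces the $\R[G]$-commutant to be $\cM_2(\R)$, so the complex structure $J$ descends to the $2$-dimensional space $U$ instead of mixing the $W_0$-factor), and the fact that mere $\Q$-actions are handled automatically---the very point the paper must address by a separate remark---since you only ever use the $\Q[G]$-module structure on $H_1(X,\Q)$ and the $G$-equivariance of $J$. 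What the citation buys is brevity, and a statement (idempotent relations yield isogeny relations) that applies beyond the isotypic situation at hand.
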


\begin{proof}
This follows from~\cite[(3.1)--(3.4)]{es} (see also \cite[Section~1]{kr} and \cite[Proposition~13.6.2]{bl}).\ This reference  assumes that we have a bona fide action of $G$ on $X$ but only uses the induced morphism $\Q[G]\to \End_\Q(X)$ of $\Q$-algebras.
 \end{proof}

In the situation of Proposition~\ref{propb1}, we prove that any    $G$-invariant $\Q$-polarization is essentially unique.

 \begin{lemm}\label{lb3}
 Let $X$ be an \av\ with a  $\Q$-action  of  a finite group  $G$  and let~$\theta $ be a $G$-invariant polarization on $X$.\ If the analytic representation  of $G$ is irreducible, any $G$-invariant $\Q$-polarization on $X$ is a rational multiple of $\theta$.
\end{lemm}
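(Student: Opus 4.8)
The plan is to transport the polarization $\theta'$ into an endomorphism via the embedding $\iota_\theta\colon \NS(X)_\Q\hra\End_\Q(X)$ of Appendix~\ref{b2}, and then to show that this endomorphism is a rational scalar by combining Schur's lemma for the analytic representation with the positivity of the Rosati involution. Throughout I write $u\mapsto u'$ for the Rosati involution attached to $\theta$.

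First I would set $\alpha:=\iota_\theta(\theta')\in\End_\Q(X)$; by definition $\alpha$ lies in $\End^s_\Q(X)$, i.e.\ it is Rosati-symmetric. The preliminary observation is that, since $\theta$ is $G$-invariant, formula~\eqref{for} applied to $u=g\in G$ and to $\theta$ itself gives $\Id=\iota_\theta(g^*\theta)=g'\circ\iota_\theta(\theta)\circ g=g'\circ g$, because $\iota_\theta(\theta)=\phi_\theta^{-1}\circ\phi_\theta=\Id$. Hence $g'=g^{-1}$ for every $g\in G$ (the $G$-action is unitary for $\theta$). Applying~\eqref{for} now to $\theta'$ and using $g^*\theta'=\theta'$, I obtain $\alpha=g'\circ\alpha\circ g=g^{-1}\circ\alpha\circ g$, so that $\alpha$ commutes with the image of $G$ in $\End_\Q(X)$.

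Next I would pass to the analytic representation $\rho_a\colon\End_\Q(X)\hra\End_\C(T_{X,0})$, which is an injective ring homomorphism. Since $\alpha$ commutes with $G$, the operator $\rho_a(\alpha)$ commutes with the analytic representation of $G$; as the latter is irreducible, Schur's lemma forces $\rho_a(\alpha)=c\,\Id$ for a single scalar $c\in\C$. To identify $c$, I would use that $\alpha$ is Rosati-symmetric: with respect to the positive definite Hermitian form on $T_{X,0}$ defined by the polarization $\theta$, the Rosati involution is the adjoint, so $\rho_a(\alpha)$ is self-adjoint and its eigenvalue $c$ is real. Consequently the characteristic polynomial of $\alpha$ acting on $H_1(X,\Q)$ is rational, has $c$ and $\bar c=c$ as its only complex roots, and must therefore equal $(t-c)^{2n}$ where $n=\dim(X)$; comparing the coefficient of $t^{2n-1}$ gives $-2nc\in\Q$, whence $c\in\Q$. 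Finally, since $c\in\Q$ we have $\rho_a(\alpha)=\rho_a(c\,\Id)$, so injectivity of $\rho_a$ yields $\alpha=c\,\Id=c\,\iota_\theta(\theta)=\iota_\theta(c\theta)$, and injectivity of $\iota_\theta$ gives $\theta'=c\,\theta$, as required.

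The steps involving~\eqref{for} are purely formal, and Schur's lemma is immediate; the \emph{delicate point} is the descent of $c$ from $\C$ to $\Q$. Schur produces only a complex scalar, and it is the interplay between its reality (coming from positivity of the Rosati form on the symmetric element $\alpha$) and the rationality of the characteristic polynomial on $H_1(X,\Q)$ that pins $c$ down to a rational number. I expect this to be the part requiring the most care, although each ingredient is standard.
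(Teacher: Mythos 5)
Your proposal is correct and follows essentially the same route as the paper's proof: use formula~\eqref{for} with $u=g$ to get $g'=g^{-1}$, deduce that $\iota_\theta(\theta')$ commutes with the $G$-action, and apply Schur's lemma to the analytic representation. The only difference is that you carefully justify the descent of the Schur scalar from $\C$ to $\Q$ (via Rosati-positivity and the rational characteristic polynomial on $H_1(X,\Q)$), a point the paper leaves implicit in its final ``hence $\theta'$ must be a multiple of $\theta$''; your elaboration is correct and standard.
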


\begin{proof}
 Let $g\in G$, which we view as an invertible element of $\End_\Q(X) $.\ Since~$\theta$ is $g$-invariant, identity~\eqref{for} (applied with $\theta'=\theta$ and $u=g$) implies $g'\circ g=\Id_X$.\ Let 
 $\theta'\in \NS(X)_\Q$.\ Applying~\eqref{for} again, we get  
 $$\iota_{\theta}(g^*\theta')=g'\circ \iota_{\theta}(\theta')\circ g=g^{-1}\circ \iota_{\theta}(\theta')\circ g.$$
 If $\theta'$ is $G$-invariant, we obtain $\iota_{\theta}(\theta')=g^{-1}\circ \iota_{\theta}(\theta')\circ g$ for all $g\in G$.\ 
  If the analytic representation   of $G$
   is irreducible, $\rho_a(\iota_{\theta}(\theta'))$ must, by   Schur's lemma, be a multiple of the identity, hence  $\theta'$ must be a  multiple of $\theta$.
\end{proof}

\subsection{Polarizations on self-products of elliptic curves}
Let $E$ be an elliptic curve, so that $\go_E:=\End(E)$ is either $\Z$ or an order in an imaginary quadratic extension of $\Q$.\ We have
$$\End(E^n)\isom \cM_{n}(\go_E)\quad\textnormal{and}\quad \End_\Q(E^n)\isom \cM_{n}(\go_E\otimes\Q), $$
and $\rho_a$ is the  embedding of these matrix rings into the  ring $\cM_n(\C)$ induced by the choice of an embedding $\go_E\hra \C$.

Polarizations on $E^n$ were studied in particular by Lange in~\cite{lan}.\ We denote by $\theta_0$  the product principal polarization on $E^n$.

\begin{prop}\label{propb3}
Let $E$ be an elliptic curve.\ 
\begin{itemize}
\item The Rosati involution defined by $\theta_0$ on $\End(E^n)$   corresponds to the involution $M\mapsto \overline M^T$ on $\cM_{n}(\go_E)$.
\item Via the embedding $\iota_{\theta_0}$, polarizations $\theta$ on $E^n$ correspond to positive definite Hermitian matrices   $M_\theta\in\cM_{n}(\go_E)$ and the degree of the polarization $\theta$ is $ \det(M_\theta)$.
\item The group of automorphisms   $\Aut(E^n,\theta)$ is the unitary group
$$\U(n,M_\theta):= \{M\in \cM_{n}(\go_E)\mid \overline M^T M_\theta\, M=  M_\theta\}.$$
 \end{itemize}
\end{prop}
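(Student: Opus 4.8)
The plan is to establish the three bullets in turn, each resting on the previous one and on the formalism of the injective map $\iota_{\theta_0}\colon\NS(E^n)_\Q\hookrightarrow\End_\Q(E^n)$ and identity~\eqref{for} recorded above. Throughout I identify $\End(E^n)$ with $\cM_n(\go_E)$ and $\widehat{E^n}$ with $E^n$ by means of the product principal polarization $\phi_{\theta_0}$, which is block-diagonal with the principal polarization $\phi_{\theta_E}\colon E\isomto\widehat E$ of each factor on the diagonal.

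First I would compute the Rosati involution of $\theta_0$. On a single elliptic curve the Rosati involution attached to $\phi_{\theta_E}$ on $\go_E=\End(E)$ is complex conjugation: writing $\theta_E$ as a Hermitian form $H$ on the tangent line and $\alpha\in\go_E$ as multiplication by its image under $\go_E\hookrightarrow\C$, the adjunction $H(\alpha z,w)=H(z,\alpha'w)$ forces $\alpha'=\bar\alpha$ (this is the identity when $\go_E=\Z$). Dualizing an endomorphism $u$ of $E^n$ with matrix $M=(m_{ij})$ transposes the matrix of its partial duals, so under the identification furnished by $\phi_{\theta_0}$ the dual isogeny $\widehat u$ has matrix $(\overline{m_{ji}})=\overline M^{T}$; hence $u'=\phi_{\theta_0}^{-1}\circ\widehat u\circ\phi_{\theta_0}$ corresponds to $M\mapsto\overline M^{T}$ (the general principle that on a product with the product principal polarization the Rosati involution is conjugate-transpose relative to the factors; see~\cite[Ch.~5]{bl}).

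For the second bullet, the image of $\iota_{\theta_0}$ is the group $\End^s(E^n)$ of Rosati-symmetric elements, which by the first bullet is exactly the set of Hermitian matrices $M=\overline M^{T}$ in $\cM_n(\go_E)$; thus $\NS(E^n)$ (resp.\ $\NS(E^n)_\Q$) is identified with Hermitian matrices over $\go_E$ (resp.\ over $\go_E\otimes\Q$). Applying $\rho_a$ carries $M_\theta:=\iota_{\theta_0}(\theta)$ to the Hermitian form on $T_{E^n,0}\isom\C^n$ representing $\theta$ relative to $\theta_0$, and by the Riemann positivity condition $\theta$ is a polarization precisely when this form, hence $M_\theta$, is positive definite. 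For the degree, $\phi_\theta=\phi_{\theta_0}\circ M_\theta$ with $\phi_{\theta_0}$ an isomorphism gives $\deg\phi_\theta=\deg(M_\theta)$; since the rational representation complexifies as $\rho_a\oplus\overline{\rho_a}$, the degree of the isogeny attached to $M$ equals $|\det_\C\rho_a(M)|^2$, and for Hermitian $M_\theta$ one has $\det_\C\rho_a(M_\theta)=\det_{\go_E}M_\theta\in\go_E\cap\R=\Z$, positive, whence $\deg\phi_\theta=(\det M_\theta)^2$. As the degree of a polarization is by definition $\sqrt{\deg\phi_\theta}$ (equivalently $\chi(L)$ for $L$ representing $\theta$), normalized so that the principal case is $\det M_\theta=1$, this yields degree $\det M_\theta$. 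I would cite Lange~\cite{lan} for these positivity and degree computations rather than reprove them.

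Finally, an automorphism $u$ of $E^n$ (an invertible element of $\cM_n(\go_E)$) preserves $\theta$ iff $u^{*}\theta=\theta$. Applying~\eqref{for} with base polarization $\theta_0$, with $\theta'=\theta$ and the endomorphism $u$, and inserting $u'=\overline u^{T}$ together with $\iota_{\theta_0}(\theta)=M_\theta$, gives $\iota_{\theta_0}(u^{*}\theta)=\overline u^{T}M_\theta\,u$; by injectivity of $\iota_{\theta_0}$ the invariance is equivalent to $\overline u^{T}M_\theta\,u=M_\theta$. Taking determinants over $\go_E$ shows that any $M\in\cM_n(\go_E)$ satisfying this relation already has $\det_{\go_E}M$ a unit, hence is invertible, so the defining condition of $\U(n,M_\theta)$ automatically selects automorphisms, and therefore $\Aut(E^n,\theta)=\U(n,M_\theta)$. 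The only genuinely non-formal step is the second bullet—matching the abstract ampleness of $\theta$ with positive-definiteness of $M_\theta$ and extracting the degree through the determinant—and that is where I expect the real work to lie, handled by appeal to~\cite{lan}.
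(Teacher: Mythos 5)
Your proposal is correct and follows essentially the same route as the paper: the same identification of $\End(E^n)$ with $\cM_n(\go_E)$ under the product principal polarization, the Rosati involution as $M\mapsto\overline M^T$, $\NS(E^n)$ as Hermitian matrices with polarizations corresponding to the positive definite ones (the paper cites \cite[Lemma~2.3]{lan} and \cite[Theorem~5.2.4]{bl} for these points), and the unitary-group description extracted from~\eqref{for}. The only divergence is cosmetic, in the degree computation: the paper invokes the characteristic-polynomial identity of \cite[Proposition~5.2.3]{bl}, which gives $\det M_\theta=\theta^n/n!$ directly, whereas you pass through $\deg\phi_\theta=\lvert\det\rho_a(M_\theta)\rvert^2=(\det M_\theta)^2$ and take the positive square root---an equally valid standard argument.
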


\begin{proof}
If we write $E=\C/(\Z\oplus \tau\Z)$, the period matrix for $E^n$ is $\begin{pmatrix}I_n&\tau I_n\end{pmatrix}$.\ The first item then follows from \cite[Lemma~2.3]{lan} and elements of  $ \NS(E^n)$   correspond to Hermitian matrices.\ By \cite[Theorem~5.2.4]{bl}, polarizations correspond to positive definite Hermitian matrices and the degree of the polarization is the determinant of the matrix.\ More precisely, one has (\cite[Proposition~5.2.3]{bl})
$$\det(T I_n-M_\theta)=\sum_{j=0}^n (-1)^{n-j}\frac{\theta_0^j\cdot \theta^{n-j}}{j!(n-j)!}\,T^j
.$$
The last item follows from~\eqref{for}.
\end{proof}

\begin{rema}\label{remb4}
Let $G$ be a finite group with a $\Q$-representation $\rho\colon \Q[G]\to \cM_{n}(\Q)$.\ For any elliptic curve $E$, this defines a $\Q$-action of $G$ on $E^n$.\ 
It follows from the proposition that 
any  positive definite symmetric matrix   $M_\theta\in\cM_{n}(\Q)$ such  that, for all $g\in G$, 
$$\rho(g)^T M_\theta\, \rho(g)=  M_\theta$$
defines a   $G$-invariant $\Q$-polarization on $E^n$.\ 
Such a matrix always exists: take for example $M_\theta:=\sum_{g\in G} \rho(g)^T   \rho(g)$ (it corresponds to the~$\Q$-polarization $ \sum_{g\in G} g^*\theta_0$).\

The analytic representation is $\rho_\C\colon \C[G]\to \cM_{n}(\C)$.\ If it is irreducible, every    $G$-invariant $\Q$-polarization on $E^{  n}$ is, by Lemma~\ref{lb3}, a rational multiple of $\theta$.\ 
\end{rema}

 We end this section with the construction of an explicit \av\ of dimension~$10$ with a $\G$-action, such that the associated analytic representation is the  irreducible representation $\bw2\xi$, together with a $\G$-invariant {\em principal} polarization.\
Set $ \lambda:=\tfrac12(-1+\sqrt{-11})$ and consider the elliptic curve $E_\lambda:=\C/\Z[\lambda]$, which has complex multiplication  by~$\Z[\lambda]$.\

 \begin{prop}\label{prop63}
There exists a   principal polarization  $\theta$ on the \av\       $E_\lambda^{10}$ and a faithful action $\G\hra\Aut(E_\lambda^{10},\theta )$ such that the associated analytic representation is the  irreducible representation $\bw2\xi$ of $\G$.
\end{prop}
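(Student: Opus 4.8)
The goal is to construct a principal polarization $\theta$ on $E_\lambda^{10}$ together with a faithful embedding $\G\hra\Aut(E_\lambda^{10},\theta)$ whose analytic representation is $\bw2\xi$. By Proposition~\ref{propb3}, the whole problem translates into a concrete linear-algebra task over the ring $\go:=\Z[\lambda]$: a polarization on $E_\lambda^{10}$ is the same thing as a positive definite Hermitian matrix $M_\theta\in\cM_{10}(\go)$, it is principal precisely when $\det(M_\theta)=1$, and the $\G$-action is given by a group homomorphism $\rho\colon\G\to\cM_{10}(\go)$ satisfying $\overline{\rho(g)}^{\,T}M_\theta\,\rho(g)=M_\theta$ for all $g\in\G$. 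So the plan is to produce such a pair $(\rho,M_\theta)$ explicitly.

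First I would fix $\rho$. By Appendix~\ref{sea2}, the representation $\bw2\xi$ has integral character and Schur index~$1$, so it is realized by a morphism $\G\to\GL(10,\Z)\subset\cM_{10}(\go)$; composing with the inclusion $\Z\hra\go$ gives a candidate $\rho$ with analytic representation $\rho_\C\cong\bw2\xi$, which is irreducible and hence makes the faithfulness automatic since $\G$ is simple. Next, by Remark~\ref{remb4}, $\G$-invariant $\Q$-polarizations certainly exist (e.g.\ the averaged form $\sum_{g}\rho(g)^T\rho(g)$), and since $\bw2\xi$ is irreducible, Lemma~\ref{lb3} tells us that the space of $\G$-invariant $\Q$-polarizations is one-dimensional: every invariant Hermitian form is a rational multiple of a single one. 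The content of the proposition is therefore that within this pencil there sits an \emph{integral} Hermitian matrix over $\go$ of determinant exactly~$1$.

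The main work, then, is to pin down the invariant Hermitian form over $\go$ and normalize its determinant to $1$. Because $\G$ acts irreducibly, the lattice $\go^{10}$ with the invariant Hermitian form is (up to scaling) a $\G$-stable Hermitian $\go$-lattice, and the relevant arithmetic is governed by the fact that $\go$ is the ring of integers of $\Q(\sqrt{-11})$, which has class number~$1$. The natural strategy is to exhibit the form directly: take the averaged form, or better the form coming from the $\G$-equivariant symmetric isomorphism $w\colon\bw2V_\xi\isomto\bw2V_\xi^\vee$ of~\eqref{defu} combined with the complex-multiplication structure $\lambda$, and then rescale by the unique rational factor that clears denominators minimally. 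One must check that the resulting matrix $M_\theta$ lies in $\cM_{10}(\go)$, is positive definite (guaranteed since it is a positive multiple of a genuine polarization), and has $\det(M_\theta)=1$. The determinant computation is the crux: it is a single explicit number, and the whole proposition hinges on that number being a unit in $\go$, i.e.\ equal to~$1$ after the correct normalization.

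The hardest part, and the step most likely to require genuine care rather than routine verification, is the determinant normalization. A priori the minimal invariant integral form has some determinant $d>1$, and there is no formal reason the principal case $d=1$ is attainable for the specific lattice $\go^{10}$ — this is exactly why the authors only \emph{conjecture} that $(\J,\theta)$ itself is isomorphic to this explicit principally polarized abelian variety, while being able to \emph{prove} the mere existence of the principal polarization here. I expect the argument to reduce to recognizing the invariant Hermitian $\go$-lattice as a known unimodular (or rescaled unimodular) Hermitian lattice over $\Z[\lambda]$ of rank $10$ — plausibly related to the lattice $\SN$ of Section~\ref{secc1}, whose discriminant group is $(\Z/11\Z)^2$ and whose $\G$-structure already appears throughout the paper — and then invoking the classification of such Hermitian lattices, or simply exhibiting the Gram matrix and computing its determinant by machine as was done for the quadric and sextic equations earlier. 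Once an explicit $M_\theta\in\cM_{10}(\go)$ with $\det(M_\theta)=1$ and $\overline{\rho(g)}^{\,T}M_\theta\,\rho(g)=M_\theta$ is written down, Proposition~\ref{propb3} finishes the proof immediately.
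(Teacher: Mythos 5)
Your translation of the problem via Proposition~\ref{propb3} matches the paper's setup, but your first concrete step---fixing $\rho$ to be the Burnside integral realization $\G\to\GL(10,\Z)\subset\cM_{10}(\go)$---closes off the route to a principal polarization. For that choice of $\rho$, since $\bw2\xi$ is rational and irreducible, Schur's lemma (as in Lemma~\ref{lb3}) forces every $\G$-invariant Hermitian form over $\go$ to be a \emph{rational symmetric} matrix, a rational multiple of one primitive integral symmetric form $S_0$; so the only candidate determinants are $c^{10}\det(S_0)$ with $c\in\Z$. And $\det(S_0)=1$ is impossible: a positive definite unimodular $\Z$-lattice of rank $10$ is $\Z^{10}$ or $E_8\oplus\Z^2$, and $11$ divides the order of neither automorphism group ($2^{10}\cdot 10!$, resp.\ $|W(E_8)|\cdot 8$), so neither admits an order-$11$ automorphism. (In fact the relevant invariant lattice is tied to $\SN$, with discriminant group $(\Z/11\Z)^2$.) The paper makes exactly this point in the remark following its proof: the $\G$-action realizing the proposition is \emph{not} the rational action of Remark~\ref{remb4}---otherwise the Gram matrix~\eqref{mat10} would have rational coefficients---and the two actions are only conjugate by a $\Q$-automorphism of $E_\lambda^{10}$. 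So the "crux" you flag (determinant normalization) is not merely hard in your framework; it fails, and no amount of rescaling or machine computation of the pencil you describe will produce $\det=1$.

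The missing idea is to build a genuinely $\go$-linear action by going through rank $5$ first. The paper takes from Schiemann's tables a positive definite \emph{unimodular} Hermitian form $H'$ on $\Z[\lambda]^5$ whose unitary group has order $1320$; by Bennama--Bertin this group is $\G\times\{\pm1\}$ and the resulting representation $\G\hra\U(5,H')$ is $\xi$ (this rank-$5$ lattice is the intermediate Jacobian of the Klein cubic). Then the induced Hermitian form $H$ on $\bw2{\Z[\lambda]^5}=\Z[\lambda]^{10}$, defined by $H(x_1\wedge x_2,x_3\wedge x_4)=H'(x_1,x_3)H'(x_2,x_4)-H'(x_1,x_4)H'(x_2,x_3)$, is automatically positive definite and unimodular (its Gram matrix is the second compound of that of $H'$, of determinant $\det(H')^{\binom{4}{1}}=1$), and it carries $\G$ acting through $\bw2\xi$. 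One application of Proposition~\ref{propb3} then finishes. Your sketch contains no substitute for this construction, and since your fixed $\rho$ makes the statement false within its own pencil of invariant forms, the proposal has a genuine gap rather than just an unverified computation.
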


\begin{proof}
 By \cite[Table~1]{sch}), there is a   positive definite unimodular $\Z[\lambda]$-sesquilinear Hermitian form~$H'$ on~$\Z[\lambda]^5$ with an automorphism of order $11$.\  Its Gram matrix in the canonical $\Z[\lambda]$-basis  $(e_1,\dots,e_5)$  of~$\Z[\lambda]^5$ is 
$$
\begin{pmatrix}
 3 & 1-\bar\lambda  &-\lambda&1&-\bar\lambda   \\
1-\lambda &  3 & -1& - \lambda&1  \\
-\bar\lambda  & -1 &  3&\lambda&-1+\lambda  \\
 1 &  -\bar\lambda&  \bar\lambda&3&1-\bar \lambda \\
- \lambda & 1 &  -1+\bar\lambda& 1-\lambda&3
\end{pmatrix}
 $$
 and its unitary group has order $2^{3}\cdot 3\cdot 5\cdot 11=1\,320 $ (\cite{sch}).

By Proposition~\ref{propb3}, this form defines a principal polarization $\theta'$ on the  \av~$E_\lambda^5$ and the group $\Aut(E_\lambda^5,\theta')$ has order $1\,320 $; in particular, it contains an element of order~$11$.\ It follows from \cite{bb} that the group $\Aut(E_\lambda^5,\theta')$ is isomorphic to $\G\times\{\pm 1\} $ and  the
faithful representation   $\G\hra\Aut(E_\lambda^5,\theta')\hra \U(5,H')$ given by Proposition~\ref{propb3} is  $\xi$.\footnote{The principally polarized abelian fivefold $(E_\lambda^5,\theta')$ was studied in \cite{adl,adls,gon,rou}: it is the intermediate Jacobian of the  Klein  cubic threefold  
with equation
 $x_1^2x_2+x_2^2x_3+x_3^2x_4+x_4^2x_5 +x_5^2x_1 =0$ in $\P^4$.}

The Hermitian form $H'$ on $\Z[\lambda]^5$ induces a positive definite unimodular Hermitian form~$H$ on $\bw2\Z[\lambda]^5=\Z[\lambda]^{10}$ by the formula
$$
H(x_1\wedge x_2,x_3\wedge x_4):=H'(x_1,x_3)H'(x_2,x_4)-H'(x_1,x_4)H'(x_2,x_3).
$$
The matrix of $H$ (in the basis $(e_{12},e_{13},e_{14},e_{15},e_{23},e_{24},e_{25},e_{34},e_{35},e_{45})$) is
\begin{equation}\label{mat10}
\left(\begin{smallmatrix}
4& 2\lambda&-1 -2\lambda&-1-\lambda&-2+2\lambda &-\lambda &-1-2 \lambda&-2-\lambda &1& -2 \\
2\bar\lambda&6 &-1+2\lambda &-1+2\lambda&6+2\lambda &-2+\lambda &-4+\lambda&\lambda &-\lambda &2+\lambda  \\
-1-2\bar\lambda&-1+2\bar\lambda & 8&5+2\lambda&-2-2\lambda & 5+2\lambda &3+2\lambda&1-2\lambda &1&-1-2\lambda   \\
-1-\bar\lambda&-1+2\bar\lambda & 5+2\bar\lambda& 6&-1-2 \lambda&4&5+2\lambda&-1-\lambda &-1-\lambda&-1-\lambda \\
-2+2\bar\lambda& 6+2\bar\lambda&-2-2\bar\lambda &-1-2\bar \lambda & 8&2\lambda & -2+3\lambda& 2\lambda& -2-\lambda&3+\lambda  \\
-\bar\lambda & -2+\bar\lambda& 5+2\bar\lambda &4 & 2\bar\lambda& 6& 5+2\lambda&0 &-1 &-\lambda   \\
-1-2 \bar\lambda&-4+\bar\lambda &3+2\bar\lambda & 5+2\bar\lambda&-2+3\bar\lambda &5+2\bar\lambda & 8&2 &-1+\lambda&-1 -2\lambda  \\
-2-\bar\lambda &\bar\lambda &1-2\bar\lambda &-1-\bar\lambda & 2\bar\lambda &0 &2 & 6&2+2 \lambda & -2\lambda  \\
1&-\bar\lambda &1 &-1-\bar\lambda & -2-\bar\lambda & -1&-1+\bar\lambda &2+2\bar \lambda  &4 & -2  \\
-2&2+\bar\lambda&-1-2\bar\lambda&-1-\bar\lambda&3+\bar\lambda  &-\bar\lambda  &-1 -2\bar\lambda & -2\bar\lambda &-2 &  4 
\end{smallmatrix}\right).
\end{equation}
 By Proposition~\ref{propb3} again, the form $H$ defines a principal polarization $\theta$ on the  \av~$E_\lambda^{10}$, the group $\Aut(E_\lambda^{10},\theta)$ contains $\G $, and the corresponding analytic representation is~$\bw2\xi$.\  
\end{proof}
 
The $\G$-action on~$E_\lambda^{10}$ in the proposition is not the $\G$-action described in Remark~\ref{remb4} (otherwise, since $\G$-invariant polarizations are proportional, the matrix~\eqref{mat10} would, by Lemma~\ref{lb3}, have rational coefficients): these actions are only conjugate by a $\Q$-automorphism  of $E_\lambda^{10}$.

\end{document}